\newcommand{\nc}{\newcommand}
\nc{\fg}{\mathfrak{f} } \nc{\vg}{\mathfrak{v} } \nc{\wg}{\mathfrak{w} }
\nc{\zg}{\mathfrak{z} } \nc{\ngo}{\mathfrak{n} } \nc{\kg}{\mathfrak{k} }
\nc{\mg}{\mathfrak{m} } \nc{\bg}{\mathfrak{b} } \nc{\ggo}{\mathfrak{g} }
\nc{\ggob}{\overline{\mathfrak{g}} } \nc{\sog}{\mathfrak{so} }
\nc{\sug}{\mathfrak{su} } \nc{\spg}{\mathfrak{sp} } \nc{\slg}{\mathfrak{sl} }
\nc{\glg}{\mathfrak{gl} } \nc{\cg}{\mathfrak{c} } \nc{\rg}{\mathfrak{r} }
\nc{\hg}{\mathfrak{h} } \nc{\tg}{\mathfrak{t} } \nc{\ug}{\mathfrak{u} }
\nc{\dg}{\mathfrak{d} } \nc{\ag}{\mathfrak{a} } \nc{\pg}{\mathfrak{p} }
\nc{\sg}{\mathfrak{s} } \nc{\affg}{\mathfrak{aff} } \nc{\qg}{\mathfrak{q} }
\nc{\pca}{\mathcal{P}} \nc{\nca}{\mathcal{N}} \nc{\lca}{\mathcal{L}}
\nc{\oca}{\mathcal{O}} \nc{\mca}{\mathcal{M}} \nc{\tca}{\mathcal{T}}
\nc{\aca}{\mathcal{A}} \nc{\cca}{\mathcal{C}} \nc{\gca}{\mathcal{G}}
\nc{\sca}{\mathcal{S}} \nc{\hca}{\mathcal{H}} \nc{\bca}{\mathcal{B}}
\nc{\dca}{\mathcal{D}} \nc{\val}{\operatorname{val}}
\nc{\vp}{\varphi} \nc{\ddt}{\frac{d}{dt}} \nc{\dds}{\frac{d}{ds}}
\nc{\dpar}{\frac{\partial}{\partial t}} \nc{\im}{\mathtt{i}}
\nc{\SO}{\mathrm{SO}} \nc{\Spe}{\mathrm{Sp}} \nc{\Sl}{\mathrm{SL}}
\nc{\SU}{\mathrm{SU}} \nc{\Or}{\mathrm{O}} \nc{\U}{\mathrm{U}} \nc{\Gl}{\mathrm{GL}}
\nc{\Se}{\mathrm{S}} \nc{\Cl}{\mathrm{Cl}} \nc{\Spein}{\mathrm{Spin}}
\nc{\Pin}{\mathrm{Pin}} \nc{\G}{\mathrm{GL}_n(\RR)} \nc{\g}{\mathfrak{gl}_n(\RR)}
\nc{\RR}{{\Bbb R}} \nc{\HH}{{\Bbb H}} \nc{\CC}{{\Bbb C}} \nc{\ZZ}{{\Bbb Z}}
\nc{\FF}{{\Bbb F}} \nc{\NN}{{\Bbb N}} \nc{\QQ}{{\Bbb Q}} \nc{\PP}{{\Bbb P}} \nc{\OO}{{\Bbb O}}
\nc{\vs}{\vspace{.2cm}} \nc{\vsp}{\vspace{1cm}} \nc{\ip}{\langle\cdot,\cdot\rangle}
\nc{\ipp}{(\cdot,\cdot)} \nc{\la}{\langle} \nc{\ra}{\rangle} \nc{\unm}{\tfrac{1}{2}}
\nc{\unc}{\tfrac{1}{4}} \nc{\und}{\tfrac{1}{16}} \nc{\no}{\vs\noindent}
\nc{\lam}{\Lambda^2(\RR^n)^*\otimes\RR^n} \nc{\tangz}{{\rm T}^{\rm Zar}}
\nc{\nor}{{\sf n}}  \nc{\mum}{/\!\!/} \nc{\kir}{/\!\!/\!\!/}
\nc{\Ri}{\tfrac{4\Ric_{\mu}}{||\mu||^2}} \nc{\ds}{\displaystyle}
\nc{\ben}{\begin{enumerate}} \nc{\een}{\end{enumerate}} \nc{\f}{\frac}
\nc{\lb}{[\cdot,\cdot]} \nc{\isn}{\tfrac{1}{||v||^2}}
\nc{\gkp}{(\ggo=\kg\oplus\pg,\ip)} \nc{\ukh}{(\ug=\kg\oplus\hg,\ip)}
\nc{\tgkp}{(\tilde{\ggo}=\kg\oplus\pg,\ip)}
\nc{\wt}{\widetilde} \nc{\mm}{M}
\nc{\Hess}{\operatorname{Hess}} \nc{\ad}{\operatorname{ad}}
\nc{\Ad}{\operatorname{Ad}} \nc{\rank}{\operatorname{rank}}
\nc{\Irr}{\operatorname{Irr}} \nc{\End}{\operatorname{End}}
\nc{\Aut}{\operatorname{Aut}} \nc{\Inn}{\operatorname{Inn}}
\nc{\Der}{\operatorname{Der}} \nc{\Ker}{\operatorname{Ker}}
\nc{\Iso}{\operatorname{I}} \nc{\Diff}{\operatorname{Diff}}
\nc{\Lie}{\operatorname{L}} \nc{\tr}{\operatorname{tr}} \nc{\dif}{\operatorname{d}}
\nc{\sen}{\operatorname{sen}} \nc{\modu}{\operatorname{mod}}
\nc{\CRic}{\operatorname{PP}} \nc{\Cric}{\operatorname{P}} \nc{\Ricci}{\operatorname{Ric}}
\nc{\sym}{\operatorname{sym}} \nc{\herm}{\operatorname{herm}} \nc{\symac}{\operatorname{sym^{ac}}}
\nc{\symc}{\operatorname{sym^{c}}} \nc{\scalar}{\operatorname{sc}}
\nc{\grad}{\operatorname{grad}} \nc{\ricci}{\operatorname{Rc}}
\nc{\Nor}{\operatorname{Norm}}  \nc{\ricc}{\operatorname{Rc^{c}}}
\nc{\Ricc}{\operatorname{Ric^{c}}} \nc{\ricac}{\operatorname{Rc^{ac}}}
\nc{\Ricac}{\operatorname{Ric^{ac}}} \nc{\Riem}{\operatorname{Rm}}
\nc{\riccig}{\operatorname{ric^{\gamma}}} \nc{\Rin}{\operatorname{M}}
\nc{\Le}{\operatorname{L}} \nc{\tang}{\operatorname{T}}
\nc{\level}{\operatorname{level}} \nc{\rad}{\operatorname{r}}
\nc{\abel}{\operatorname{ab}} \nc{\CH}{\operatorname{CH}}
\nc{\mcc}{\operatorname{mcc}} \nc{\Adj}{\operatorname{Adj}}
\nc{\Order}{\operatorname{O}}  \nc{\inj}{\operatorname{inj}} \nc{\proy}{\operatorname{pr}}
\nc{\vol}{\operatorname{vol}} \nc{\Diag}{\operatorname{Diag}}
\nc{\Spec}{\operatorname{Spec}}
\theoremstyle{plain}
\newtheorem{theorem}{Theorem}[section]
\newtheorem{proposition}[theorem]{Proposition}
\newtheorem{corollary}[theorem]{Corollary}
\newtheorem{lemma}[theorem]{Lemma}
\theoremstyle{definition}
\newtheorem{definition}[theorem]{Definition}
\theoremstyle{remark}
\newtheorem{remark}[theorem]{Remark}
\newtheorem{example}[theorem]{Example}
\title[Geometric flows]{Geometric flows and their solitons on homogeneous spaces}
\author{Jorge Lauret}
\address{Universidad Nacional de C\'ordoba, FaMAF and CIEM, 5000 C\'ordoba, Argentina}
\email{lauret@famaf.unc.edu.ar}
\thanks{This research was partially supported by grants from CONICET, FONCYT and SeCyT (Universidad Nacional de C\'ordoba)}
\dedicatory{Dedicated to Sergio.}
\begin{document}

\maketitle

\begin{abstract}
We develop a general approach to study geometric flows on homogeneous spaces.  Our main tool will be a dynamical system defined on the variety of Lie algebras called the {\it bracket flow}, which coincides with the original geometric flow after a natural change of variables.  The advantage of using this method relies on the fact that the possible pointed (or Cheeger-Gromov) limits of solutions, as well as self-similar solutions or soliton structures, can be much better visualized.  The approach has already been worked out in the Ricci flow case and for general curvature flows of almost-hermitian structures on Lie groups.  This paper is intended as an attempt to motivate the use of the method on homogeneous spaces for any flow of geometric structures under minimal natural assumptions. As a novel application, we find a closed $G_2$-structure on a nilpotent Lie group which is an expanding soliton for the Laplacian flow and is not an eigenvector.
\end{abstract}

\tableofcontents

\section{Introduction}\label{intro}

The aim of this work is to develop a general approach to study geometric flows on homogeneous spaces which relies on the variety of Lie algebras.  The approach has been worked out in \cite{spacehm,homRF} for the homogeneous Ricci flow, in \cite{homRS} for Ricci solitons on homogeneous spaces and in \cite{SCF} for general curvature flows of almost-hermitian structures on Lie groups (see Section \ref{exa-sec} for a short overview on more applications).  This paper is intended as an attempt to motivate the use of the method on homogeneous spaces for any geometric evolution under minimal natural assumptions.

We consider a geometric flow on a given differentiable manifold $M$ of the form
$$
\dpar\gamma(t)=q(\gamma(t)),
$$
where $\gamma(t)$ is a one-parameter family of (tensor fields attached to) geometric structures on $M$ and $\gamma\mapsto q(\gamma)$ is an assignment of a tensor field on $M$ of the same type associated to geometric structures of a given class.  Typically $q(\gamma)$ is a curvature tensor, a Laplacian or the gradient field of some natural geometric functional.  Recall that a geometric structure may be defined by a set of tensor fields $\gamma$ (e.g. a almost-hermitian structures), so in that case the geometric flow will consist of a set of differential equations, one for each tensor.  Our basic assumption is that the flow is invariant by diffeomorphisms, i.e. $q(\vp^*\gamma)=\vp^*\gamma$ for any $\vp\in\Diff(M)$.

\begin{remark}\label{rem1}
However, in the case when a complex manifold $(M,J)$ is fixed and a flow for hermitian metrics or any other geometric structure $\gamma$ on $(M,J)$ is to be considered, the tensor $q$ and so the flow will be assumed to be only invariant by bi-holomorphic maps of $(M,J)$ rather than by diffeomorphisms of $M$.  The symplectic analogous assumption will be made for flows of compatible metrics on a fixed symplectic manifold and its symplectomorphisms.
\end{remark}

\subsection{Geometric flows on homogeneous spaces}
On a homogeneous space $M=G/K$, if we fix a reductive (i.e. $\Ad(K)$-invariant) decomposition $\ggo=\kg\oplus\pg$, then any $G$-invariant geometric structure on $M$ is determined by an $\Ad(K)$-invariant tensor $\gamma$ on $\pg\equiv T_oM$.  Therefore, by requiring $G$-invariance of $\gamma(t)$ for all $t$, the flow equation becomes equivalent to an ODE for a one-parameter family $\gamma(t)$ of $\Ad(K)$-invariant tensors on the single vector space $\pg$ of the form
\begin{equation}\label{i-flow}
\ddt \gamma(t) = q(\gamma(t)).
\end{equation}
Thus short-time existence (forward and backward) and uniqueness (among $G$-invariant ones) of solutions are guaranteed.  This is an advantageous feature, as for most of the geometric flows studied in the literature, short-time existence and uniqueness of solutions are still open problems in the noncompact general case.

A second assumption we make on the geometric structure is that for any fixed $\gamma$, the orbit
\begin{equation}\label{i-nondeg3}
\Gl(\pg)\cdot\gamma
\end{equation}
is open in the vector space $T$ of all tensors of the same type as $\gamma$.  Such orbit consists precisely of those tensors which are non-degenerate in some sense.  We note that this holds for many classes of geometric structures, including Riemannian metrics, almost-hermitian and $G_2$ structures (see Example \ref{exa}).  Consider $\theta:\glg(\pg)\longrightarrow \End(T)$, the representation obtained as the derivative of the natural left $\Gl(\pg)$-action on tensors (i.e.\ $\theta(A)\gamma:=\ddt|_0\left(e^{-tA}\right)^*\gamma$).  If $\glg(\pg)=\ggo_\gamma\oplus\qg_\gamma$ is an $\Ad(G_\gamma)$-invariant decomposition, where $G_\gamma\subset\Gl(\pg)$ is the stabilizer subgroup at $\gamma$ and $\ggo_\gamma:=\{ A\in\glg(\pg):\theta(A)\gamma=0\}$ its Lie algebra, then, for each tensor $q\in T$, there exists a unique linear operator
\begin{equation}\label{i-nondeg}
Q:\pg\longrightarrow\pg, \quad Q\in\qg_\gamma, \quad\mbox{such that}\quad q=\theta(Q)\gamma.
\end{equation}

\begin{remark}\label{rem2}
In the complex case (see Remark \ref{rem1}), $\Gl(\pg)$ must be replaced with
$$
\Gl(\pg,J):=\{ h\in\Gl(\pg):hJ=Jh\},
$$
which is isomorphic to $\Gl_n(\CC)$ if $\dim{M}=2n$, and with
$$
\Spe(\pg,\omega):=\{ h\in\Gl(\pg):h^tJh=J\}\simeq\Spe(n,\RR)
$$
in the symplectic case.
\end{remark}

Let $\gamma(t)$ be a $G$-invariant solution on the homogeneous space $M=G/K$ to the geometric flow \eqref{i-flow}, starting at $\gamma:=\gamma(0)$.  Assume that $G$ is simply connected and $K$ connected, so $M$ is simply connected.  Since $\gamma(t)$ is nondegenerate, for each $t$ there exists $h(t)\in\Gl(\pg)$ such that $\gamma(t)=h(t)^*\gamma$ (see  \eqref{i-nondeg3}).  This implies that, for each $t$, there is an equivalence of geometric structures
$$
\vp(t):(M,\gamma(t)) \longrightarrow \left(G_{\mu(t)}/K_{\mu(t)},\gamma\right),
$$
where 
$$
\mu(t):=\tilde{h}(t)\cdot\lb=\tilde{h}(t)[\tilde{h}(t)^{-1}\cdot,\tilde{h}(t)^{-1}\cdot], \qquad
\tilde{h}(t):=\left[\begin{smallmatrix} I&0\\ 0&h(t) \end{smallmatrix}\right]:\ggo\longrightarrow\ggo,
$$
a Lie bracket on the underlying vector space $\ggo$ isomorphic to the Lie bracket $\lb$ of $\ggo$, $G_{\mu(t)}$ is the corresponding simply connected Lie group and $K_{\mu(t)}$ the connected Lie subgroup of $G_{\mu(t)}$ with Lie algebra $\kg$.  Indeed, the equivariant diffeomorphism $\vp(t)$ defined by the Lie group isomorphism $G\longrightarrow G_{\mu(t)}$ with derivative $\tilde{h}(t)$ satisfies that $\gamma(t)=\vp(t)^*\gamma$.  Note that for each $t$,  the homogeneous space $G_{\mu(t)}/K_{\mu(t)}$ is equipped with the $G_{\mu(t)}$-invariant geometric structure determined by the fixed tensor $\gamma$.

A natural question arises: How does the family of Lie brackets $\mu(t)$ evolve?

\subsection{Bracket flow}
It follows from \eqref{i-nondeg} that for each $t$, there exists a unique operator $Q_t\in\qg_{\gamma(t)}$ such that $q(\gamma(t))=\theta(Q_t)\gamma(t)$.  We can now formulate our first main result (see Theorem \ref{BF-thm} for a more complete statement).

\begin{theorem}\label{i-BF}
If $h(t)\in\Gl(\pg)$ solves the ODE $\ddt h(t)=-h(t)Q_t$, $h(0)=I$, then $\gamma(t)=h(t)^*\gamma$ and
\begin{equation}\label{i-BFeq}
\ddt\mu(t)=\delta_{\mu(t)}\left(\left[\begin{smallmatrix} 0&0\\ 0&Q_{\mu(t)} \end{smallmatrix}\right]\right), \qquad\mu(0)=\lb,
\end{equation}
where $Q_\mu\in\qg_{\gamma}$ is the operator defined by $\theta(Q_\mu)\gamma=q(G_\mu/K_\mu,\gamma)$ and $\delta_\mu:\glg(\ggo)\longrightarrow\Lambda^2\ggo^*\otimes\ggo$ is given by
$$
\delta_\mu(A):=\mu(A\cdot,\cdot)+\mu(\cdot,A\cdot)-A\mu(\cdot,\cdot), \qquad\forall A\in\glg(\ggo).
$$
Conversely, if $\mu(t)$ is a solution to \eqref{i-BFeq} and $h(t)\in\Gl(\pg)$ solves the ODE $\ddt h(t)=-Q_{\mu(t)}h(t)$, $h(0)=I$, then $\gamma(t)=h(t)^*\gamma$ and  $\mu(t):=\tilde{h}(t)\cdot\lb$ for all $t$.
\end{theorem}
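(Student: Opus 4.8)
The plan is to reduce the whole statement to two differentiation rules for the relevant group actions, one structural identity encoding the diffeomorphism invariance of $q$, and finally uniqueness of solutions of linear ODEs. Throughout I write the left $\Gl(\pg)$-action on tensors as $\rho(g)\gamma=(g^{-1})^*\gamma$, so that $\theta=d\rho|_I$, and I use the cocycle rule $\ddt\,\rho(g(t))v=\theta(\dot g(t)g(t)^{-1})\rho(g(t))v$, valid for any curve $g(t)$ in a linearly acting group (it follows by writing $g(t+s)=\big(g(t+s)g(t)^{-1}\big)g(t)$ and differentiating at $s=0$).

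\emph{Two infinitesimal rules.} For a curve $h(t)\in\Gl(\pg)$, writing $h^*\gamma=\rho(h^{-1})\gamma$ and applying the cocycle rule gives
\[
\ddt\,(h(t)^*\gamma)=\theta\!\left(-h(t)^{-1}\ddt h(t)\right)(h(t)^*\gamma).
\]
For the $\Gl(\ggo)$-action $g\cdot\mu=g\,\mu(g^{-1}\cdot,g^{-1}\cdot)$ on $\Lambda^2\ggo^*\otimes\ggo$, whose differential at $I$ is $A\mapsto-\delta_\mu(A)$, the same rule yields
\[
\ddt\,(\tilde h(t)\cdot\lb)=-\delta_{\tilde h(t)\cdot\lb}\!\left(\dot{\tilde h}(t)\,\tilde h(t)^{-1}\right),\qquad \dot{\tilde h}(t)\,\tilde h(t)^{-1}=\left[\begin{smallmatrix} 0&0\\ 0&\dot h(t)h(t)^{-1}\end{smallmatrix}\right].
\]
I would also record the conjugation identity $h^*(\theta(A)\gamma)=\theta(h^{-1}Ah)(h^*\gamma)$, obtained from $(e^{-tA}h)^*=(e^{-t\,h^{-1}Ah})^*h^*$; it shows in particular that $\qg_{\gamma(t)}=h(t)^{-1}\qg_\gamma h(t)$ whenever $\gamma(t)=h(t)^*\gamma$.

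\emph{The structural identity.} This is the crux. Since $q$ is $\Diff(M)$-invariant and $\vp(t)\colon(M,h(t)^*\gamma)\to(G_{\mu(t)}/K_{\mu(t)},\gamma)$ is the equivariant equivalence with $d\vp(t)|_o=h(t)$, where $\mu(t)=\tilde h(t)\cdot\lb$, one gets
\[
q(h(t)^*\gamma)=\vp(t)^*\,q\big(G_{\mu(t)}/K_{\mu(t)},\gamma\big)=h(t)^*\big(\theta(Q_{\mu(t)})\gamma\big)=\theta\!\left(h(t)^{-1}Q_{\mu(t)}h(t)\right)(h(t)^*\gamma),
\]
using $\theta(Q_{\mu(t)})\gamma=q(G_{\mu(t)}/K_{\mu(t)},\gamma)$ and the conjugation identity. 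Combined with the forward-direction fact $\gamma(t)=h(t)^*\gamma$ below and the uniqueness in \eqref{i-nondeg} (note $h^{-1}Q_{\mu(t)}h\in\qg_{\gamma(t)}$), this forces $h(t)Q_th(t)^{-1}=Q_{\mu(t)}$.

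\emph{The two implications.} For the direct statement, $\dot h=-hQ_t$ gives $-h^{-1}\dot h=Q_t$, so by the first rule $h(t)^*\gamma$ solves $\ddt y=\theta(Q_t)y$; as $\gamma(t)$ solves the same linear ODE with the same initial value, uniqueness yields $\gamma(t)=h(t)^*\gamma$. Then the structural identity gives $\dot h h^{-1}=-hQ_th^{-1}=-Q_{\mu(t)}$, and the second rule produces exactly \eqref{i-BFeq} with $\mu(0)=\lb$. Conversely, if $\mu(t)$ solves \eqref{i-BFeq} and $\dot h=-Q_{\mu(t)}h$, set $\nu(t):=\tilde h(t)\cdot\lb$; since $\dot h h^{-1}=-Q_{\mu(t)}$, the second rule shows $\nu(t)$ and $\mu(t)$ both solve the linear ODE $\ddt y=\delta_y\!\left(\left[\begin{smallmatrix}0&0\\0&Q_{\mu(t)}\end{smallmatrix}\right]\right)$ with value $\lb$ at $t=0$, so $\mu(t)=\tilde h(t)\cdot\lb$. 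Finally $\gamma(t):=h(t)^*\gamma$ satisfies $\ddt\gamma(t)=\theta(h^{-1}Q_{\mu(t)}h)\gamma(t)=q(\gamma(t))$ by the structural identity, whence uniqueness of the geometric flow \eqref{i-flow} identifies it with the solution. I expect the main obstacle to be the structural identity: one must justify that $q$ — a curvature tensor, a Laplacian, or a gradient field — evaluated on the genuinely different homogeneous space $(M,h(t)^*\gamma)$ agrees, after pullback by $h(t)$, with the same quantity for the fixed $\gamma$ on the varying group $G_{\mu(t)}/K_{\mu(t)}$. This is precisely where the standing hypothesis $q(\vp^*\gamma)=\vp^*q(\gamma)$ is indispensable, and where care is needed in comparing $q$ across different group structures; everything else is chain-rule bookkeeping plus uniqueness for linear ODEs.
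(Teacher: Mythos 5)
Your proposal is correct and follows essentially the same route as the paper: the two differentiation rules are the paper's computations around \eqref{hgamma} and $\ddt\,\tilde h(t)\cdot\lb=-\delta_{\tilde h(t)\cdot\lb}(\tilde h'(t)\tilde h(t)^{-1})$, your ``structural identity'' is exactly the paper's \eqref{Qmuh} obtained from the equivariant equivalence of Proposition \ref{const}, and both directions are closed by uniqueness of solutions of the respective ODEs. The only point treated more explicitly in the paper is the verification that $h(t)$ satisfies condition \eqref{adkh} (equivalently, that $\mu(t)$ stays in $\hca_{q,n}(\gamma)$, via Lemma \ref{muflow} in the converse direction), which is needed before one may invoke the equivalence $\vp(t)$; you use this implicitly but it is consistent with your argument.
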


Evolution equation \eqref{i-BFeq} is called the {\it bracket flow}.  A direct consequence of the theorem is that the geometric flow solution $\gamma(t)$ and the bracket flow solution $\mu(t)$ differ only by pullback by time-dependent diffeomorphisms. Thus the maximal interval of time $(T_-,T_+)$ where a solution exists is the same for both flows, so the bracket flow can be used as a tool to study regularity questions on the flow (see Sections \ref{evol-norm} and \ref{sec-reg}).  We prove for instance that the velocity of the flow $q(\gamma(t))$ must blow up at a finite-time singularity (i.e.\ either $T_+<\infty$ or $T_->-\infty$) for any geometric flow.

The previous theorem has also the following application on convergence, which follows from the discussion given in Section \ref{converg}, based on \cite{spacehm}, on convergence of homogeneous manifolds.  Suppose that the class of geometric structures involved either contains or determines a Riemannian metric $g_\gamma$ for each $\gamma$ (e.g.\ almost-hermitian and $G_2$ structures).

\begin{corollary}\label{i-conv}
Assume that $\mu(t_k)\to\lambda$ for some subsequence of times $t_k\to T_\pm$.
\begin{itemize}
\item[(i)] If there is a positive lower bound for the (Lie) injectivity radii of the $G$-invariant metrics $g_{\gamma(t_k)}$ on $M=G/K$, then, after possibly passing to a subsequence, the Riemannian manifolds $\left(M,g_{\gamma(t_k)}\right)$ converge in the pointed (or Cheeger-Gromov) sense to $(G_\lambda/K_\lambda,g_{\gamma})$, as $k\to\infty$.

\item[(ii)] In the case of a Lie group $M=G$, the hypothesis on the injectivity radii in part (i) can be removed.  Moreover, if either $G_\lambda$ is compact or $G$ is completely solvable, then the geometric structures $(M,\gamma(t_k))$ smoothly converges up to pull-back by diffeomorphisms to $(G_\lambda,\gamma)$, as $k\to\infty$.
\end{itemize}
\end{corollary}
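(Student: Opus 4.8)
The plan is to translate the convergence question for the original metrics $g_{\gamma(t_k)}$ into one for the fixed tensor $\gamma$ on the varying homogeneous spaces $G_{\mu(t_k)}/K_{\mu(t_k)}$, and then invoke the convergence theory for homogeneous manifolds under bracket convergence. The starting point is Theorem \ref{i-BF}: for each $k$ the equivariant diffeomorphism $\vp(t_k)$ is an equivalence of geometric structures between $(M,\gamma(t_k))$ and $(G_{\mu(t_k)}/K_{\mu(t_k)},\gamma)$, and in particular an isometry between the underlying Riemannian metrics $g_{\gamma(t_k)}$ and $g_\gamma$. Thus it suffices to prove the asserted convergence for the spaces $(G_{\mu(t_k)}/K_{\mu(t_k)},g_\gamma)$, where now the metric $g_\gamma$ is held fixed and only the Lie bracket $\mu(t_k)$ varies.

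For part (i), I would first observe that since $\mu(t_k)\to\lambda$ and the curvature tensor of $(G_\mu/K_\mu,g_\gamma)$, together with all its covariant derivatives, depends continuously (indeed polynomially) on $\mu$ for the fixed metric $g_\gamma$, the sequence $(G_{\mu(t_k)}/K_{\mu(t_k)},g_\gamma)$ has uniformly bounded geometry: each $|\nabla^j\Riem|$ is bounded along the sequence. Combined with the assumed uniform positive lower bound on the (Lie) injectivity radii, the Cheeger-Gromov compactness theorem applies at the base point $o$, yielding a pointed subsequential limit. It then remains to identify this limit, and here the continuity in $\mu$ is used again: the coordinates supplied by the converging brackets allow one to produce explicit pointed diffeomorphisms realizing the convergence, forcing the limit to be precisely $(G_\lambda/K_\lambda,g_\gamma)$. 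This is exactly the content of the convergence results of Section \ref{converg} based on \cite{spacehm}.

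For part (ii) I would specialize to $M=G$, so that $K=\{e\}$, $\kg=0$, $\pg=\ggo$, and the homogeneous structures are left-invariant structures on Lie groups. The key simplification is that for left-invariant metrics the Lie injectivity radius is controlled purely algebraically by the bracket, so that convergence $\mu(t_k)\to\lambda$ already forces the uniform positive lower bound needed in (i); this removes the hypothesis. For the stronger assertion of smooth convergence up to pull-back by diffeomorphisms, the issue is to upgrade pointed Cheeger-Gromov convergence to honest convergence of the tensors $\gamma$ on a single fixed manifold. When $G$ is completely solvable the exponential map is a global diffeomorphism onto $\RR^n$ and left-invariant tensors are globally parametrized by the bracket, so convergence of brackets transfers directly to smooth convergence of structures after a fixed pull-back; when $G_\lambda$ is compact the limiting presentation is rigid enough to permit the same upgrade.

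The main obstacle I expect is the non-collapsing / injectivity radius control, which is precisely where pointed convergence can degenerate. In part (i) this is hypothesized away, but in part (ii) one must genuinely verify that bracket convergence on a Lie group cannot drive the injectivity radius to zero, and separately that the limiting space is the full $(G_\lambda,\gamma)$ rather than a proper quotient or a collapsed lower-dimensional object; the compactness of $G_\lambda$ or the complete solvability of $G$ are exactly the hypotheses that rule out this pathological behavior of the isometry group and guarantee the passage to smooth convergence.
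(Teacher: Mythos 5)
Your proposal is correct and follows essentially the same route as the paper: transfer the problem via the bracket-flow equivalence of Theorem \ref{i-BF} to the family $(G_{\mu(t_k)}/K_{\mu(t_k)},\gamma)$ with fixed tensor and varying bracket, then invoke the convergence results of Section \ref{converg} (Theorem \ref{convmu2} and Corollary \ref{cor-conv} for part (i), and the algebraic lower bound $r_\mu\geq\pi/|\mu|$ of Lemma \ref{lieinjLG} together with Proposition \ref{convmu4} for part (ii)). The only cosmetic difference is that you re-derive the bounded-geometry and injectivity-radius inputs explicitly where the paper simply cites the corresponding statements from \cite{spacehm}.
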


We note that the limiting Lie group $G_\lambda$ in the above corollary might be non-isomorphic to $G$, and consequently in part (i), the limiting homogeneous space $G_\lambda/K_\lambda$ might be non-homeomorphic to $M$.

\subsection{Solitons}
It is well known that a geometric structure $\gamma$ on a differentiable manifold $M$ will flow self-similarly along a geometric flow $\dpar\gamma=q(\gamma)$, in the sense that the solution $\gamma(t)$ starting at $\gamma$ has the form $\gamma(t)=c(t)\vp(t)^*\gamma$, for some $c(t)\in\RR^*$ and $\vp(t)\in\Diff(M)$, if and only if
$$
q(\gamma)=c\gamma+\lca_{X}\gamma, \qquad \mbox{for some}\quad c\in\RR, \quad X\in\chi(M)\; \mbox{(complete)},
$$
where $\lca_X$ denotes Lie derivative.  In analogy to the terminology used in Ricci flow theory, we call such $\gamma$ a {\it soliton geometric structure}.

\begin{remark}\label{rem3}
The diffeomorphisms $\vp(t)$ must be bi-holomorphims (resp. symplectomorphisms) for flows of hermitian (resp. compatible) metrics or any kind of geometric structures $\gamma$ on a fixed complex (resp. symplectic) manifold (see Remark \ref{rem1}).
\end{remark}

On homogeneous spaces, in view of the equivalence between any geometric flow and the corresponding bracket flow given by Theorem \ref{i-BF}, it is natural to also wonder about self-similarity for bracket flow solutions.  This leads us to consider Lie brackets which only evolves by scaling:  $\mu(t)=c(t)\cdot\lb$, for some $c(t)\in\RR^*$ (see \eqref{scmu}).  Our second main result shows that this gives rise to soliton structures of an algebraic nature, which are simpler to handle.

\begin{theorem}\label{i-rsequiv}
For a simply connected homogeneous space $(G/K,\gamma)$ endowed with a $G$-invariant geometric structure $\gamma$ of type $(r,s)$, the following conditions are equivalent:
\begin{itemize}
\item[(i)] The bracket flow solution starting at $\lb$ is given by
$$
\mu(t)=c(t)\cdot\lb, \qquad\mbox{for some}\quad c(t)>0, \quad c(0)=1.
$$
\item[(ii)] The operator $Q(\gamma)\in\qg_\gamma$ such that $\theta(Q(\gamma))=q(\gamma)$ satisfies
\begin{equation}\label{i-as}
Q(\gamma)=cI+D_\pg, \qquad \mbox{for some} \quad c\in\RR, \quad D=\left[\begin{smallmatrix} 0&0\\ 0&D_\pg \end{smallmatrix}\right] \in\Der(\ggo).
\end{equation}
\end{itemize}
In that case, $(G/K,\gamma)$ is a soliton geometric structure with
$$
q(\gamma)=(s-r)c\gamma-\lca_{X_D}\gamma,
$$
where $X_D$ denotes the vector field on $G/K$ defined by the one-parameter subgroup of $\Aut(G)$ attached to the derivation $D$.
\end{theorem}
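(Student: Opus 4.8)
The plan is to run both implications off the single identity $\delta_{\mu}(A)=0$ iff $A\in\Der(\mu)$ (immediate from the definition of $\delta_\mu$), together with the way $\delta_\mu$ sees scalings. Writing $P:=\left[\begin{smallmatrix} 0&0\\ 0&I_\pg\end{smallmatrix}\right]$ and recalling from the definition of $\delta_\mu$ that $\dds|_0\big(e^{sA}\cdot\mu\big)=-\delta_\mu(A)$, I first record three facts: (a) the scaling curve $c(t)\cdot\lb$ has velocity a scalar multiple of $\delta_{\mu(t)}(P)$ at each of its points; (b) the scaling $\left[\begin{smallmatrix} I&0\\ 0&cI_\pg\end{smallmatrix}\right]$ commutes with $P$ and with every $D=\left[\begin{smallmatrix} 0&0\\ 0&D_\pg\end{smallmatrix}\right]$; and (c) since $\Der(g\cdot\mu)=g\,\Der(\mu)\,g^{-1}$, a block-diagonal $D\in\Der(\lb)$ stays a derivation of every $c\cdot\lb$. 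Fact (c) is what will let condition \eqref{i-as} survive along the whole scaling orbit.

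For (i)$\Rightarrow$(ii) I would only differentiate at $t=0$. The bracket flow \eqref{i-BFeq} gives $\ddt|_0\mu=\delta_{\mu(0)}\!\left(\left[\begin{smallmatrix} 0&0\\ 0&Q(\gamma)\end{smallmatrix}\right]\right)$ with $\mu(0)=\lb$, while (i) forces this velocity to be tangent to the scaling orbit, i.e.\ equal to $\delta_{\mu(0)}(cP)$ for the constant $c$ determined by $c'(0)$, by fact (a). Subtracting, $\delta_{\mu(0)}\!\left(\left[\begin{smallmatrix} 0&0\\ 0&Q(\gamma)-cI_\pg\end{smallmatrix}\right]\right)=0$, so $D:=\left[\begin{smallmatrix} 0&0\\ 0&Q(\gamma)-cI_\pg\end{smallmatrix}\right]\in\Der(\ggo)$ by the basic identity; this is exactly \eqref{i-as}.

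For the converse (ii)$\Rightarrow$(i) the task is to propagate \eqref{i-as} for all $t$, not just at the initial point, so I would feed the ansatz $\mu(t)=c(t)\cdot\lb$ into \eqref{i-BFeq} and check it is a solution. By fact (c), $D$ remains a derivation of $\mu(t)$, so $\delta_{\mu(t)}(D)=0$ and only the scalar part of $\hat Q_{\mu(t)}:=\left[\begin{smallmatrix} 0&0\\ 0&Q_{\mu(t)}\end{smallmatrix}\right]$ contributes; by fact (a) this scalar part produces exactly a velocity tangent to the scaling orbit, and matching its size gives a scalar ODE for $c(t)$. Solving that ODE and invoking the uniqueness of solutions of \eqref{i-BFeq} (Theorem \ref{i-BF}) then forces the genuine bracket flow solution to coincide with $c(t)\cdot\lb$, which is (i). The one nontrivial input here is that $Q_{c\cdot\lb}$ keeps the form $\tilde c\,I_\pg+D_\pg$ with $\tilde c$ a function of $c$ alone --- in other words that the tensor $q$ is homogeneous under the homothety $\lb\mapsto c\cdot\lb$ --- and this scale--homogeneity of $q$, which pins down the ODE for $c$, is the step I expect to be the main obstacle.

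Finally, granting \eqref{i-as}, the soliton identity is a direct evaluation of $q(\gamma)=\theta(Q(\gamma))\gamma=\theta(cI)\gamma+\theta(D_\pg)\gamma$. The homogeneity of the $\Gl(\pg)$-action on tensors of type $(r,s)$ gives $\theta(cI)\gamma=\ddt|_0\left(e^{-tcI}\right)^*\gamma=(s-r)c\gamma$. For the second term I would exponentiate $D$: as $G$ is simply connected, $e^{tD}\in\Aut(\ggo)$ integrates to $\Phi_t\in\Aut(G)$, and since $D$ kills $\kg$ it fixes $K$ and descends to a global flow $\bar\Phi_t$ on $G/K$ with $d\bar\Phi_t|_o=e^{tD_\pg}$ on $\pg\equiv T_oM$; thus the generating field $X_D$ is complete, and at $o$ one has $\lca_{X_D}\gamma=\ddt|_0\bar\Phi_t^{\,*}\gamma=\theta(-D_\pg)\gamma=-\theta(D_\pg)\gamma$, hence $\theta(D_\pg)\gamma=-\lca_{X_D}\gamma$ there and, by $G$-invariance, everywhere. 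Combining the two terms yields $q(\gamma)=(s-r)c\gamma-\lca_{X_D}\gamma$, exhibiting $(G/K,\gamma)$ as a soliton geometric structure.
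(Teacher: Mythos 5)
Your argument is correct, and two of its three pieces coincide with the paper's: the implication (i)$\Rightarrow$(ii) by differentiating at $t=0$ and reading off the derivation from $\delta_{\lb}(\cdot)=0$, and the soliton identity via integrating $D$ to a one-parameter group of automorphisms fixing $K$ and computing $\lca_{X_D}\gamma$ at the origin exactly as in \eqref{Lder}. Where you genuinely diverge is in (ii)$\Rightarrow$(i). The paper (Theorem \ref{rsequiv}) works on the geometric side: it writes down the explicit self-similar solution $\gamma(t)=b(t)e^{s(t)D_\pg}\cdot\gamma$, checks it solves the flow using diffeomorphism invariance together with the scaling property \eqref{scprq}, and then transfers to the bracket side through Theorem \ref{BF-thm} with $h(t)=e^{-s(t)Q(\gamma)}$; this buys the explicit formulas for $b(t)$ and $s(t)$ that appear in the full statement. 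You instead stay on the bracket side, feed the ansatz $\mu(t)=c(t)\cdot\lb$ into \eqref{i-BFeq}, kill the derivation part with $\delta_{\mu(t)}(D)=0$, and reduce to a scalar ODE for $c(t)$ plus uniqueness; this is shorter for the bare equivalence but does not by itself produce the explicit $\gamma(t)$. The input you single out as ``the main obstacle'' --- that $Q_{c\cdot\lb}$ remains scalar--plus--derivation with a scalar depending only on $c$ --- is not an obstacle in the paper's framework: the section containing Theorem \ref{rsequiv} carries the blanket hypothesis $q(c\gamma)=c^{\alpha}q(\gamma)$ of \eqref{scprq}, whose consequence \eqref{PQmuc} gives $Q_{c\cdot\mu}=c^{(r-s)(1-\alpha)}Q_\mu$, which is exactly what your scalar ODE $c'=kc^{(r-s)(1-\alpha)+1}$ requires (note the same hypothesis is used in the paper's own proof of this implication, so neither route avoids it). One small imprecision: from \eqref{PQmuc} one gets $Q_{c\cdot\lb}=c^{(r-s)(1-\alpha)}\bigl(kI+D_\pg\bigr)$, so the derivation part is $c^{(r-s)(1-\alpha)}D_\pg$ rather than $D_\pg$ itself; this is harmless, since any scalar multiple of $D$ is still a block-diagonal derivation of every $c\cdot\lb$ by your fact (c), and $\delta_{c\cdot\lb}$ annihilates it all the same.
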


A homogeneous space $(G/K,\gamma)$ endowed with a $G$-invariant geometric structure $\gamma$ and a reductive decomposition $\ggo=\kg\oplus\pg$ is said to be an {\it algebraic soliton} if condition \eqref{i-as} holds.  The concept of algebraic soliton has been very fruitful in the study of homogeneous Ricci solitons since its introduction in \cite{soliton} (see also \cite{SCF,Frn2,SCFmuA,CRF} for the symplectic curvature flow and Chern-Ricci flow cases).  It is a useful tool to address the existence problem for soliton structures, as well as to study their uniqueness, structure and low-dimensional classification.

We use this approach to exhibit in Section \ref{G2sol} an expanding soliton closed $G_2$-structure on a nilpotent Lie group for the Laplacian flow introduced by R. Bryant in \cite{Bry}.  As far as we know, this is the first example known of a Laplacian soliton which is not an eigenvector (see \cite{LtyWei} and the references therein).

\section{Some linear algebra related to geometric structures}\label{LAgs}

Let $\gamma$ be a geometric structure on a differentiable manifold $M$, e.g.\ a Riemannian metric, a complex structure, a symplectic structure, an almost-hermitian structure, a $G_2$-structure, etc.  We also denote by $\gamma$ the corresponding tensor field, or the set of tensor fields, defining the geometric structure, i.e.\ a symmetric $2$-tensor $g$, a $(1,1)$-tensor $J$, a $2$-form $\omega$, a tern $\gamma=(\omega,g,J)$ such that $\omega=g(J\cdot,\cdot)$, a $3$-form $\vp$, etc., respectively.

After fixing a point $p\in M$ and a basis $\{ e_1,\dots,e_n\}$ of the tangent space $T_pM$, we obtain a tensor $\gamma=\gamma_p$ on the vector space $\RR^n=T_pM$. As one can observe in the examples above, the tensor $\gamma$ is always non-degenerate in some sense: $g$ is positive definite, $J^2=-I$, $\omega$ is non-degenerate and $\vp$ is positive.  Moreover, a common property that is satisfied by all these geometric structures is that the orbit
\begin{equation}\label{nondeg3}
\Gl_n(\RR)\cdot\gamma
\end{equation}
is open in the vector space $T$ of all tensors of the same type as $\gamma$ and consists precisely of those tensors which are non-degenerate.  This property will be assumed to hold for $\gamma$ in this paper.  In particular,
\begin{equation}\label{nondeg}
\theta(\glg_n(\RR))\gamma = T,
\end{equation}
where $\theta:\glg_n(\RR)\longrightarrow \End(T)$ is the representation obtained as the derivative of the natural left $\Gl_n(\RR)$-action on tensors $(h,\gamma)\mapsto h\cdot\gamma=(h^{-1})^*\gamma$ (i.e.\ $\theta(A)\gamma=\ddt|_0e^{tA}\cdot\gamma$).  The Lie algebra of the stabilizer subgroup
$$
G_\gamma:=\{ h\in\Gl_n(\RR):h\cdot\gamma=\gamma\},
$$
is given by
$$
\ggo_\gamma:=\{ A\in\glg_n(\RR):\theta(A)\gamma=0\}.
$$
It follows from \eqref{nondeg3} that the set of all nondegenerate tensors of the same type as $\gamma$ is parameterized by the homogeneous space $\Gl_n(\RR)/G_\gamma$.  We consider an $\Ad(G_\gamma)$-invariant subspace $\qg_\gamma\subset\glg_n(\RR)$ such that
$$
\glg_n(\RR)=\ggo_\gamma\oplus\qg_\gamma.
$$
Since $\ggo_{h\cdot\gamma}=h\ggo_\gamma h^{-1}$, we can set for simplicity $\qg_{h\cdot\gamma}:=h\qg_\gamma h^{-1}$ for all $h\in\Gl_n(\RR)$.  By \eqref{nondeg} we have that
$$
\theta(\qg_\gamma)\gamma=T;
$$
moreover, for every tensor $q\in T$, there exists a unique operator $Q\in\qg_\gamma$ such that
\begin{equation}\label{nondeg2}
q=\theta(Q)\gamma.
\end{equation}

By varying the point $p$, what one obtains is a tensor subbundle $T\subset T^{r,s}M$, subbundles $\ggo_\gamma$ and $\qg_\gamma$ of $\End(TM)=T^{1,1}M$ such that $\End(TM)=\ggo_\gamma\oplus\qg_\gamma$ and  a linear map $j_\gamma:T\longrightarrow\qg_\gamma$ defined locally by $\theta(j_\gamma(q))\gamma=q$ (i.e. $j_\gamma(q)=Q$ if \eqref{nondeg2} holds).  Note that $j_\gamma$ is an isomorphism with inverse $i_\gamma:\qg_\gamma\longrightarrow T$ given by $i_\gamma(Q):=\theta(Q)\gamma$.  It holds that
\begin{equation}\label{j-equiv}
j_{h\cdot\gamma}(q)=hj_\gamma(h^{-1}\cdot q)h^{-1}, \qquad\forall q\in T, \quad h\in\Gl_n(\RR).
\end{equation}

Let $q=q(\delta)$ be a smooth tensor field on $M$ associated to each geometric structure $\delta$, of the same type as $\gamma$ (i.e. $q(\delta), \delta\in T$).  It follows from \eqref{nondeg2} that for each $\gamma$, there is a unique smooth $(1,1)$-tensor field $Q(\gamma)=j_\gamma(q(\gamma))$ on $M$, a section of the subbundle $\qg_\gamma$, such that at each point,
\begin{equation}\label{nondeg4}
q(\gamma)=\theta(Q(\gamma))\gamma, \qquad Q(\gamma)\in\qg_\gamma.
\end{equation}
Assume now that the map $\delta\mapsto q(\delta)$ is diffeomorphism equivariant: $q(\vp^*\delta)=\vp^*q(\delta)$ for any $\vp\in\Diff(M)$ (e.g.\ any curvature tensor, or Laplacian associated to a geometric structure, or the gradient field of any natural geometric functional).  Then,
\begin{equation}\label{Qequiv}
Q({\vp^*\gamma}) = (d\vp)^{-1}Q(\gamma) d\vp,
\end{equation}
or equivalently at each point, $Q(h\cdot\gamma)=hQ(\gamma)h^{-1}$ for any $h\in\Gl_n(\RR)$.  Indeed, for $h:=d\vp|_p$ we have that
\begin{align*}
\theta(Q({\vp^*\gamma}))h^{-1}\cdot\gamma & = \theta(Q({\vp^*\gamma}))\vp^*\gamma = q(\vp^*\gamma)=\vp^*q(\gamma) \\
& = h^{-1}\cdot\theta(Q(\gamma))\gamma = \theta(h^{-1}Q(\gamma)h)h^{-1}\cdot\gamma,
\end{align*}
and so $Q(\vp^*\gamma)-h^{-1}Q(\gamma)h\in\ggo_{h^{-1}\cdot\gamma}\cap\qg_{h^{-1}\cdot\gamma} = 0$.

\begin{example}\label{exa}
We now review each of the particular geometric structures mentioned at the beginning of the section.
\begin{itemize}
\item[(i)] For a Riemannian metric $\gamma=g$, by assuming that the fixed basis $\{ e_i\}$ of $\RR^n=T_pM$ is orthonormal, we obtain that
$$
\begin{array}{c}
T=\sca^2(\RR^n)^*, \qquad h\cdot g=g(h^{-1}\cdot,h^{-1}\cdot), \qquad \theta(A)g=-g(A\cdot,\cdot)-g(\cdot,A\cdot), \\ \\
G_\gamma=\Or(n), \qquad \ggo_\gamma=\sog(n), \qquad \qg_\gamma=\sym(n):=\{ A\in\glg_n(\RR):A^t=A\}.
\end{array}
$$
Thus what condition \eqref{nondeg2} is asserting is simply that for any symmetric $2$-tensor $q\in\sca^2(\RR^n)^*$, there exists a unique $Q\in\sym(n)$ such that $q=g(-2Q\cdot,\cdot)$.  For example, if $q(g)=\ricci(g)$, the Ricci tensor of $g$, then $Q(g)=-\unm\Ricci(g)$, where $\Ricci(g)$ is the Ricci operator of $g$.

\item[(ii)]  If $\gamma=J$ is a complex (or almost-complex) structure and $\dim{M}=2n$, then
$$
\begin{array}{c}
T=[\glg_{2n}(\RR),J], \qquad h\cdot J=hJh^{-1}, \qquad \theta(A)J=[A,J], \\ \\
G_\gamma=\Gl_n(\CC), \qquad \ggo_\gamma=\glg_n(\CC), \qquad \qg_\gamma=\{ A\in\glg_{2n}(\RR):AJ=-JA\}.
\end{array}
$$

\item[(iii)] For a symplectic structure $\gamma=\omega$, $\dim{M}=2n$, we have that
$$
\begin{array}{c}
T=\Lambda^2(\RR^{2n})^*, \qquad h\cdot\omega=\omega(h^{-1}\cdot,h^{-1}\cdot), \qquad \theta(A)\omega=-\omega(A\cdot,\cdot)-\omega(\cdot,A\cdot), \\ \\
G_\gamma=\Spe(n,\RR), \qquad \ggo_\gamma=\spg(n,\RR), \qquad \qg_\gamma=\{ A\in\glg_{2n}(\RR):\omega(A\cdot,\cdot)=\omega(\cdot,A\cdot)\}.
\end{array}
$$

\item[(iv)] An almost-hermitian (or hermitian, or almost-K\"ahler) structure $\gamma=(\omega,g,J)$, $\omega=g(J\cdot,\cdot)$, $\dim{M}=2n$, gives
$$
\begin{array}{c}
T=\left\{ (p,q,R)\in\Lambda^2(\RR^{2n})^*\oplus\sca^2(\RR^{2n})^*\oplus [\glg_{2n}(\RR),J] : p=q(J\cdot,\cdot)+g(R\cdot,\cdot)\right\}, \\ \\ h\cdot\gamma=(h\cdot\omega,h\cdot g,h\cdot J), \qquad \theta(A)\gamma=(\theta(A)\omega,\theta(A)g,\theta(A)J), \\ \\
G_\gamma=\U(n), \qquad \ggo_\gamma=\ug(n), \qquad \qg_\gamma=\qg_1\oplus\qg_2\oplus\qg_3,
\end{array}
$$
where,
$$
\begin{array}{l}
\qg_1:=\{ A\in\glg_{2n}(\RR):A^t=A, \quad AJ=-JA\}, \\ \\
\qg_2:=\{ A\in\glg_{2n}(\RR):A^t=-A, \quad AJ=-JA\}, \\ \\
\qg_3=\herm(n):=\{ A\in\glg_{2n}(\RR):A^t=A, \quad AJ=JA\}.
\end{array}
$$
We note that the following decompositions also hold:
$$
\spg(n,\RR)=\ug(n)\oplus\qg_1, \qquad \sog(2n)=\ug(n)\oplus\qg_2, \qquad \glg_n(\CC)=\ug(n)\oplus\qg_3,
$$
Note that $\dim{\qg_1}=n^2+n$, $\dim{\qg_2}=n^2-n$, $\dim{\qg_3}=n^2$ and so $\dim{T}=\dim{\qg_\gamma}=3n^2$.  According to \eqref{nondeg2}, for each $(p,q,R)\in T$ there exists a unique operator $Q=Q_1+Q_2+Q_3\in\qg_\gamma$, $Q_i\in\qg_i$, such that
$$
\begin{array}{c}
p=\theta(Q)\omega=\theta(Q_2+Q_3)\omega, \qquad q=\theta(Q)g=\theta(Q_1+Q_3)g, \\ \\  R=\theta(Q)J=\theta(Q_1+Q_2)J.
\end{array}
$$
\item[(v)] A $G_2$-structure on a $7$-dimensional differentiable manifold $M$ is a $3$-form $\vp$ which can be written on each tangent space as
$$
\vp=e^{127}+e^{347}+e^{567}+e^{135}-e^{146}-e^{236}-e^{245},
$$
with respect to some basis $\{ e_1,\dots,e_7\}$ of $\RR^7\equiv T_pM$.  Here $e^{ijk}:=e^i\wedge e^j\wedge e^k$ and $\{ e^i\}$ is the dual basis of $\{ e_i\}$.  By varying the basis, one obtains the open orbit $\Gl_7(\RR)\cdot\vp\subset\Lambda^3(\RR^7)^*$ of {\it positive} $3$-forms.  Thus for a $G_2$-structure $\gamma=\vp$, $n=7$ and
$$
\begin{array}{c}
T=\Lambda^3(\RR^7)^*, \qquad h\cdot\vp=\vp(h^{-1}\cdot,h^{-1}\cdot,h^{-1}\cdot), \\ \\ \theta(A)\vp=-\vp(A\cdot,\cdot,\cdot)-\vp(\cdot,A\cdot,\cdot)-\vp(\cdot,\cdot,A\cdot), \\ \\
G_\gamma=G_2\simeq\Aut(\OO), \qquad \ggo_\gamma=\ggo_2, \qquad \qg_\gamma=\qg_1\oplus\qg_7\oplus\qg_{27},
\end{array}
$$
where $\qg_1=\RR I$ is the one-dimensional trivial representation of $\ggo_2$, $\qg_7$ is the ($7$-dimensional) standard representation and $\qg_{27}$ is the other fundamental representation of $\ggo_2$, which has dimension $27$ (see \cite{Krg}).  It follows that
$$
\sog(7)=\ggo_2\oplus\qg_7, \qquad \sym(7)=\qg_1\oplus\qg_{27}, \qquad \qg_{27}=\sym_0(7),
$$
where $\sym_0(7):=\{ A\in\sym(7):\tr{A}=0\}$.

\item[(vi)] One may also fix a complex manifold $(M,J)$ rather than a differentiable manifold and consider a hermitian metric $\gamma=g$ on $(M,J)$.  In this case, $\dim{M}=2n$, $\Gl_n(\CC)$ plays the role of $\Gl_n(\RR)$ in the previous cases, in particular in property \eqref{Qequiv}, and we have that
$$
\begin{array}{c}
T=\{ g\in\sca^2(\RR^{2n})^*:g(J\cdot,J\cdot)=g\}, \qquad G_\gamma=\U(n), \\ \\
\glg_n(\CC)=\ggo_\gamma\oplus\qg_\gamma,\qquad \ggo_\gamma=\ug(n), \qquad \qg_\gamma=\herm(n).
\end{array}
$$
The assignment $\delta\mapsto q(\delta)$ above, which will be used to define a geometric flow, is therefore assumed to be invariant by bi-holomorphic maps of $(M,J)$ rather than by diffeomorphisms of $M$ (see Section \ref{CRF-sec} for an application in the complex case).  

\item[(vii)] The symplectic analogous to part (vi) consists in fixing a symplectic manifold $(M,\omega)$ and consider a compatible metric $\gamma=g$, giving rise to an almost-K\"ahler manifold $(M,\omega,g)$.  If $\dim{M}=2n$, then the group involved is now $\Spe(n,\RR)$ and
$$
\begin{array}{c}
T=\{ g\in\sca^2(\RR^{2n})^*:\omega=g(J\cdot,\cdot)\;\mbox{implies}\; J^2=-I\}, \qquad G_\gamma=\U(n), \\ \\
\spg(n,\RR)=\ggo_\gamma\oplus\qg_\gamma,\qquad  \ggo_\gamma=\ug(n), \qquad \qg_\gamma=\qg_1,
\end{array}
$$
where $\qg_1$ is as in part (iv).  The assignment $\delta\mapsto q(\delta)$ is here assumed to be invariant by symplectomorphisms of $(M,\omega)$.
\end{itemize}
\end{example}

\section{The space of homogeneous spaces}\label{hm}

Our aim in this section is to describe a framework developed in \cite{spacehm} in the Riemannian case, which allows us to work on the `space of homogeneous manifolds', by parameterizing  the set of all homogeneous spaces of dimension $n$ and isotropy dimension $q$ by a subset $\hca_{q,n}$ of the variety of $(q+n)$-dimensional Lie algebras.

A connected differentiable manifold $M$ is called {\it homogeneous} if there is a Lie group $G$ acting smoothly and transitively on M.  Each transitive Lie group $G$ (which can be assumed to be connected) gives rise to a presentation of $M$ as a {\it homogeneous space} $G/K$, where $K$ is the isotropy subgroup of $G$ at some point $o\in M$.  Conversely, any closed subgroup $K\subset G$ of a Lie group $G$ defines a homogeneous manifold $M=G/K$ with isotropy subgroup at the origin $o:=eK\in G/K$ given by $G_o=K$.

Consider an $\Ad(K)$-invariant direct sum $\ggo=\kg\oplus\pg$, where $\ggo$ and $\kg$ are respectively the Lie algebras of $G$ and $K$.  This is called a {\it reductive decomposition}, its existence is for instance guaranteed by the (relative) compactness of $\Ad(K)$ (e.g. if $G/K$ admits a $G$-invariant Riemannian metric) and is in general non-unique.  Note that $\pg$ can be naturally identified with the tangent space
$$
\pg\equiv T_oM=T_oG/K,
$$
via the derivative $d\pi|_e:\ggo\longrightarrow T_oG/K$ of the natural projection $\pi:G\longrightarrow G/K$.  This is equivalent to take the value at the origin $o$ of the Killing vector fields corresponding to elements of $\pg$ (i.e.\ $X_o=\ddt|_0\exp{tX}(o)$ for any $X\in\pg$).

Any homogeneous space $G/K$ will be assumed to be {\it almost-effective} for simplicity, i.e.\ the normal subgroup $\{ g\in G:ghK=hK, \;\forall h\in G\}\subset K$ is discrete.

\subsection{Varying Lie brackets viewpoint}\label{varhs}
Let us fix a $(q+n)$-dimensional real vector space $\ggo$ together with a direct sum decomposition
\begin{equation}\label{fixdec}
\ggo=\kg\oplus\pg, \qquad \dim{\kg}=q, \qquad \dim{\pg=n}.
\end{equation}
The space of all skew-symmetric algebras (or brackets) of dimension $q+n$ is parameterized by the vector space
$$
\Lambda^2\ggo^*\otimes\ggo=\{\mu:\ggo\times\ggo\longrightarrow\ggo : \mu\; \mbox{bilinear and
skew-symmetric}\}.
$$

\begin{definition}\label{hqn} We consider the subset $\hca_{q,n}\subset\Lambda^2\ggo^*\otimes\ggo$, or more precisely $\hca_{\ggo=\kg\oplus\pg}$ if preferred, of those brackets $\mu$ such that:
\begin{itemize}
\item [{\bf (h1)}]  $\mu$ satisfies the Jacobi condition, $\mu(\kg,\kg)\subset\kg$ and $\mu(\kg,\pg)\subset\pg$.

\item[{\bf (h2)}] If $G_\mu$ denotes the simply connected Lie group with Lie algebra $(\ggo,\mu)$ and $K_\mu$ is the connected Lie subgroup of $G_\mu$ with Lie algebra $\kg$, then $K_\mu$ is closed in $G_\mu$.

\item[{\bf (h3)}] $\{ Z\in\kg:\mu(Z,\pg)=0\}=0$.
\end{itemize}
\end{definition}

It follows from (h1) and (h2) that each $\mu\in\hca_{q,n}$ defines a unique simply connected homogeneous space,
\begin{equation}\label{hsmu}
\mu\in\hca_{q,n}\rightsquigarrow G_{\mu}/K_{\mu},
\end{equation}
with reductive decomposition $\ggo=\kg\oplus\pg$.  It is almost-effective by (h3).  We note that any $n$-dimensional simply connected homogeneous space $G/K$ which is almost-effective and admits a reductive decomposition can be identified with some $\mu\in\hca_{q,n}$, where $q=\dim{K}$.  Indeed, $G$ can be assumed to be simply connected without losing almost-effectiveness, and we can identify any reductive decomposition with $\ggo=\kg\oplus\pg$ at a vector space level. In this way, $\mu$ will be precisely the Lie bracket of $\ggo$.

There is a natural linear action of $\Gl(\ggo)$ on $\Lambda^2\ggo^*\otimes\ggo$ given by
\begin{equation}\label{action}
(h\cdot\mu)(X,Y)=h\mu(h^{-1}X,h^{-1}Y), \qquad X,Y\in\ggo.
\end{equation}
Note that $h$ is precisely an isomorphism between the Lie algebras $(\ggo,\mu)$ and $(\ggo,h\cdot\mu)$.  If $\mu\in\hca_{q,n}$, then $h\cdot\mu\in\hca_{q,n}$ for any $h\in\Gl(\ggo)$ leaving $\kg$ and $\pg$ invariant (i.e.\ $h\in\Gl(\kg)\times\Gl(\pg)$).  In that case, the isomorphism $\widetilde{\vp}:G_\mu\longrightarrow G_{h\cdot\mu}$ with derivative $d\widetilde{\vp}|_e=h$ satisfies that $\widetilde{\vp}(K_{\mu})=K_{h\cdot\mu}$ and so it defines an equivariant diffeomorphism
\begin{equation}\label{defphi}
\vp:G_\mu/K_\mu\longrightarrow G_{h\cdot\mu}/K_{h\cdot\mu}, \qquad \vp(aK_\mu):=\widetilde{\vp}(a)K_{h\cdot\mu}.
\end{equation}

Concerning the question of what kind of subset of $\Lambda^2\ggo^*\otimes\ggo$ the space $\hca_{q,n}$ is, we note that condition (h1) is closed as it is defined by polynomial equations on $\mu$.  On the contrary, (h3) is open and (h2) may impose very subtle conditions on $\mu$ (see \cite[Examples 3.4, 3.6]{spacehm}).  Note that $\hca_{q,n}$ is a cone, i.e. invariant by nonzero scaling.

\begin{example}\label{ex0-n}
If $q=0$, then conditions (h2) and (h3) trivially hold and (h1) is just the Jacobi condition for $\mu$.  Thus $\hca_{0,n}=\lca_n$, the variety of $n$-dimensional Lie algebras, and the set $\{G_\mu:\mu\in\lca_n\}$ parameterizes the set of all simply connected Lie groups of dimension $n$.
\end{example}

\subsection{Invariant geometric structures}\label{invgs}
Any $G$-invariant geometric structure on a homogeneous space $M=G/K$ with reductive decomposition $\ggo=\kg\oplus\pg$ is determined by a tensor, or a set of tensors, $\gamma$ on $\pg\equiv T_oM$ which is $\Ad(K)$-invariant.  This means that $(\Ad(k)|_\pg)\cdot\gamma=\gamma$ for any $k\in K$, or equivalently if $K$ is connected, $\theta(\ad{Z}|_\pg)\gamma=0$ for all $Z\in\kg$ (see Section \ref{LAgs} for the notation).  If $G/K$ is not reductive, then one can work with the identification $\ggo/\kg\equiv T_oM$, though there will always be a reductive decomposition as soon as a Riemannian metric is involved in some way in the geometric structure $\gamma$.   

If in addition to the connectedness of $K$ we assume that $G$ is simply connected (in particular, $G/K$ is simply connected), then we can apply \eqref{Qequiv} to the equivariant diffeomorphism $\vp$ defined by the automorphism $\Ad(k)\in\Aut(\ggo)\equiv\Aut(G)$.  Since $\vp^*\gamma=\gamma$, we obtain that the operator $Q(\gamma)\in\End(\pg)$ attached to any $G$-invariant tensor field $q(\gamma)$ satisfies $[\Ad(k)|_\pg,Q(\gamma)]=0$ for each $k\in K$, or equivalently,
\begin{equation}\label{adZQ}
[\ad{Z}|_\pg,Q(\gamma)]=0, \qquad\forall Z\in\kg.
\end{equation}

In order to study invariant geometric structures on homogeneous manifolds, we can therefore fix a tensor $\gamma$ on $\pg$ in addition to the vector space decomposition fixed in \eqref{fixdec} and consider the following extra condition:

\begin{itemize}
\item[{\bf (h4)}] $\gamma$ is $\ad_{\mu}{\kg}$-invariant (i.e.\ $\theta(\ad_{\mu}{Z}|_{\pg})\gamma=0$ for all $Z\in\kg$, or equivalently, $\ad_\mu{\kg}|_\pg\subset\ggo_\gamma$).
\end{itemize}

If we consider the subset of $\hca_{q,n}$ given by
\begin{equation}\label{hqng}
\hca_{q,n}(\gamma):=\{\mu\in\hca_{q,n}: \mbox{condition (h4) holds true for}\,\mu\},
\end{equation}
then each $\mu\in\hca_{q,n}(\gamma)$ now defines a unique (almost-effective and simply connected) homogeneous space endowed with an invariant geometric structure,
\begin{equation}\label{hsmu-gamma}
\mu\in\hca_{q,n}(\gamma)\rightsquigarrow\left(G_{\mu}/K_{\mu},\gamma_\mu\right),
\end{equation}
with reductive decomposition $\ggo=\kg\oplus\pg$ and
$$
\gamma_\mu(o)=\gamma, \qquad\forall\mu\in\hca_{q,n}(\gamma).
$$
Indeed, it follows from (h4) that $\gamma$ is $\Ad(K_{\mu})$-invariant as $K_{\mu}$ is connected and thus the geometric structure $\gamma_\mu$ is $G_\mu$-invariant.  $\left(G_{\mu}/K_{\mu},\gamma_\mu\right)$ will be sometimes denoted by $\left(G_{\mu}/K_{\mu},\gamma\right)$, as the whole `linear algebra' part of $\gamma_\mu$ has been fixed.

\begin{remark}\label{rem4}
The complex and symplectic cases mentioned in Remarks \ref{rem1}, \ref{rem2} and \ref{rem3} can be treated in this section as special cases of almost-hermitian structures, by adding to the definition of $\hca_{q,n}(\gamma)$ the condition of integrability of $(G_\mu/K_\mu,J)$,
\begin{itemize}
\item[{\bf (h5-$J$)}] $\mu_\pg(JX,JY)=\mu_\pg(X,Y)+J\mu_\pg(JX,Y)+J\mu_\pg(X,JY)$, for all $X,Y\in\pg$,
\end{itemize}
in the complex case, and the condition $d_\mu\omega=0$, where $d_\mu$ is the differential of forms on the manifold $G_\mu/K_\mu$, in the symplectic case, which is equivalent to
\begin{itemize}
\item[{\bf (h5-$\omega$)}] $\omega(\mu_\pg(X,Y),Z)+\omega(\mu_\pg(Y,Z),X)+\omega(\mu_\pg(Z,X),Y)=0$, for all $X,Y,Z\in\pg$.
\end{itemize}
\end{remark}

The group $\Gl(\kg)\times G_\gamma$, where $G_\gamma\subset\Gl(\pg)$ is the stabilizer subgroup of $\gamma$ (with Lie algebra $\ggo_\gamma$), leaves the set $\hca_{q,n}(\gamma)$ invariant and for any  $h=(h_\kg,h_\pg)\in\Gl(\kg)\times G_\gamma$, $\mu\in\hca_{q,n}(\gamma)$, one obtains that the equivariant diffeomorphism
\begin{equation}\label{phi-equiv}
\vp:\left(G_{\mu}/K_{\mu},\gamma_\mu\right)\longrightarrow \left(G_{h\cdot\mu}/K_{h\cdot\mu},\gamma_{h\cdot\mu}\right),
\end{equation}
defined in \eqref{defphi} is an {\it equivalence of geometric structures}, that is, $\gamma_\mu=\vp^*\gamma_{h\cdot\mu}$.  This follows from the invariance of the structures and the fact that $\vp^*\gamma=\gamma$ at the origins (recall that $d\vp|_o=\hg_\pg\in G_\gamma$).  In that case, any tensor field $q$ defined for this kind of geometric structures will satisfy that
\begin{equation}\label{Qmuh}
  q(\gamma_{h\cdot\mu})=h_\pg\cdot q(\gamma_\mu), \qquad Q_{h\cdot\mu}=h_\pg Q_\mu h_\pg^{-1}, \qquad\forall h=(h_\kg,h_\pg)\in\Gl(\kg)\times G_\gamma,
\end{equation}
where $Q_\mu\in\qg_{\gamma}\subset\End(\pg)$ is the operator defined by
\begin{equation}\label{defQmu}
\theta(Q_\mu)\gamma=q(G_\mu/K_\mu,\gamma_\mu).
\end{equation}
The statement on $Q$ can be proved in much the same way as \eqref{Qequiv} by using that $h_\pg\cdot\gamma=\gamma$ as follows:
$$
\theta(Q_{h\cdot\mu})\gamma = q(\gamma_{h\cdot\mu}) = h_\pg\cdot q(\gamma_\mu) = h_\pg\cdot\theta(Q_\mu)\gamma = \theta(h_\pg Q_\mu h_\pg^{-1})h_\pg\cdot\gamma = \theta(h_\pg Q_\mu h_\pg^{-1})\gamma,
$$
and so $Q_{h\cdot\mu}-h_\pg Q_\mu h_\pg^{-1}\in\ggo_{\gamma}\cap\qg_{\gamma}=0$.

Recall from \eqref{phi-equiv} that, geometrically, the orbit $(\Gl(\kg)\times G_\gamma)\cdot\mu\subset\hca_{q,n}(\gamma)$ does not bring anything new.  The following result gives a useful geometric meaning to the action of a subset of $\Gl(\ggo)$ on $\hca_{q,n}(\gamma)$, which is much larger than the subgroup $\Gl(\kg)\times G_\gamma$.

\begin{proposition}\label{const}
Given $\mu\in\hca_{q,n}(\gamma)$ and $h=(h_\kg,h_\pg)\in\Gl(\kg)\times\Gl(\pg)$, it holds that $h\cdot\mu\in\hca_{q,n}(\gamma)$ if and only if
\begin{equation}\label{adkh}
h_\pg\ad_{\mu}{\kg}|_{\pg}h_\pg^{-1}\subset\ggo_\gamma.
\end{equation}
In that case, $\left(G_{h\cdot\mu}/K_{h\cdot\mu},\gamma_{h\cdot\mu}\right)$ is equivalent to $\left(G_{\mu}/K_{\mu}, h_\pg^*\cdot\gamma\right)$.
\end{proposition}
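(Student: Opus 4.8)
The plan is to observe that the only one of the defining conditions of $\hca_{q,n}(\gamma)$ that is actually at stake is (h4), and then to reduce everything to a single computation of $\ad_{h\cdot\mu}\kg|_\pg$. Since $h=(h_\kg,h_\pg)$ leaves both $\kg$ and $\pg$ invariant and is a Lie algebra isomorphism $(\ggo,\mu)\to(\ggo,h\cdot\mu)$, the discussion following \eqref{action} already guarantees that $h\cdot\mu$ satisfies (h1), (h2) and (h3); hence $h\cdot\mu\in\hca_{q,n}$ and $h\cdot\mu\in\hca_{q,n}(\gamma)$ if and only if condition (h4) holds for $h\cdot\mu$, i.e.\ $\ad_{h\cdot\mu}\kg|_\pg\subset\ggo_\gamma$.

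First I would compute this bracket explicitly. For $Z\in\kg$ and $X\in\pg$, using $h^{-1}Z=h_\kg^{-1}Z\in\kg$, $h^{-1}X=h_\pg^{-1}X\in\pg$ and $\mu(\kg,\pg)\subset\pg$, one gets
$$
\ad_{h\cdot\mu}Z\,(X)=(h\cdot\mu)(Z,X)=h\,\mu(h_\kg^{-1}Z,h_\pg^{-1}X)=h_\pg\,\ad_\mu(h_\kg^{-1}Z)|_\pg\,h_\pg^{-1}(X).
$$
As $Z$ runs over $\kg$ so does $h_\kg^{-1}Z$, whence $\ad_{h\cdot\mu}\kg|_\pg=h_\pg\,\ad_\mu\kg|_\pg\,h_\pg^{-1}$. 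Therefore (h4) for $h\cdot\mu$ is precisely the inclusion $h_\pg\,\ad_\mu\kg|_\pg\,h_\pg^{-1}\subset\ggo_\gamma$ of \eqref{adkh}, which settles the first assertion.

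For the second assertion I would appeal to the equivariant diffeomorphism $\vp\colon G_\mu/K_\mu\to G_{h\cdot\mu}/K_{h\cdot\mu}$ of \eqref{defphi}, built from the group isomorphism $\widetilde{\vp}$ with derivative $h$. Differentiating the relation $\vp\circ\pi_\mu=\pi_{h\cdot\mu}\circ\widetilde{\vp}$ at $e$ and using that $d\pi|_e$ identifies $\pg$ with the tangent space at the origin shows $d\vp|_o=h_\pg$. Consequently $\vp^*\gamma_{h\cdot\mu}$ is a $G_\mu$-invariant structure whose value at $o$ is $(d\vp|_o)^*\gamma=h_\pg^*\gamma$; being $G_\mu$-invariant it is determined by this origin value and hence coincides with the structure on $G_\mu/K_\mu$ with origin tensor $h_\pg^*\gamma$. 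Thus $\vp$ is an equivalence $\left(G_\mu/K_\mu,h_\pg^*\cdot\gamma\right)\to\left(G_{h\cdot\mu}/K_{h\cdot\mu},\gamma_{h\cdot\mu}\right)$, as claimed.

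The computations are routine; the only point requiring care is the bookkeeping of conventions, ensuring the correct placement of $h_\pg$ versus $h_\pg^{-1}$ and of pullback versus the $\Gl(\pg)$-action $h\cdot\gamma=(h^{-1})^*\gamma$. It is worth noting the consistency this produces: condition \eqref{adkh} is equivalent, via $\ggo_{h_\pg^*\gamma}=h_\pg^{-1}\ggo_\gamma h_\pg$, to the statement that $h_\pg^*\gamma$ itself satisfies (h4) with respect to $\mu$, so the target $\left(G_\mu/K_\mu,h_\pg^*\cdot\gamma\right)$ is indeed a legitimate object of the framework exactly when $h\cdot\mu\in\hca_{q,n}(\gamma)$.
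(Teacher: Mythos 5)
Your proof is correct and follows the same route as the paper: reduce the membership question to condition (h4) via the identity $\ad_{h\cdot\mu}\kg|_\pg=h_\pg\,\ad_\mu\kg|_\pg\,h_\pg^{-1}$, then obtain the equivalence from the equivariant diffeomorphism of \eqref{defphi}, using $h_\pg^*\gamma=h_\pg^{-1}\cdot\gamma$ at the origin. The paper states this in two lines; you have merely supplied the routine verifications it leaves implicit.
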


\begin{proof}
We have that $h\cdot\mu\in\hca_{q,n}(\gamma)$ if and only if $\ad_{h\cdot\mu}{\kg}|_\pg=h_\pg\ad_\mu{\kg}|_\pg h_\pg^{-1}\subset\ggo_\gamma$.  The equivalence is provided by the equivariant diffeomorphism defined in \eqref{defphi} (recall that $\hg_\pg^*\gamma=h_\pg^{-1}\cdot\gamma$).
\end{proof}

\begin{corollary}\label{const-cor}
The subset
$$
\big\{ h\cdot\mu:h=(h_\kg,h_\pg)\in\Gl(\kg)\times\Gl(\pg), \quad h_\pg\;\mbox{satisfies condition}\; \eqref{adkh} \big\}\subset\hca_{q,n}(\gamma),
$$
parameterizes the set of all $G_\mu$-invariant geometric structures of the same class as $\gamma$ on the homogeneous space $G_\mu/K_\mu$ up to equivariant equivalence.
\end{corollary}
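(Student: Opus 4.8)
The plan is to read the statement as a two-way correspondence and verify each direction separately, the nontrivial content being surjectivity onto equivalence classes. The forward direction is essentially free from Proposition \ref{const}: if $h=(h_\kg,h_\pg)\in\Gl(\kg)\times\Gl(\pg)$ with $h_\pg$ satisfying \eqref{adkh}, then $h\cdot\mu\in\hca_{q,n}(\gamma)$, so $h\cdot\mu$ does define a homogeneous space $\left(G_{h\cdot\mu}/K_{h\cdot\mu},\gamma_{h\cdot\mu}\right)$, and the same proposition identifies it up to equivariant equivalence with $\left(G_\mu/K_\mu,h_\pg^*\gamma\right)$, a $G_\mu$-invariant structure of the same class as $\gamma$ on the fixed space $G_\mu/K_\mu$. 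Thus every element of the prescribed subset yields such a structure; it remains to show that all of them arise this way.

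For the reverse direction I would start with an arbitrary $G_\mu$-invariant geometric structure of the same class as $\gamma$ on $G_\mu/K_\mu$. As recalled in Section \ref{invgs}, such a structure is determined by an $\Ad(K_\mu)$-invariant tensor $\delta$ on $\pg$ of the same type as $\gamma$, the invariance being equivalent, since $K_\mu$ is connected, to $\ad_{\mu}{\kg}|_{\pg}\subset\ggo_\delta$, where $\ggo_\delta$ is the stabilizer Lie algebra of $\delta$. The decisive input is the nondegeneracy hypothesis: because $\delta$ belongs to the same class as $\gamma$, and by \eqref{nondeg3} the orbit $\Gl(\pg)\cdot\gamma$ is open and exhausts all tensors of that class, we may write $\delta=h_\pg^{-1}\cdot\gamma=h_\pg^*\gamma$ for some $h_\pg\in\Gl(\pg)$.

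It then suffices to check that this $h_\pg$ satisfies condition \eqref{adkh}. Here I would invoke the conjugation formula $\ggo_{h\cdot\gamma}=h\ggo_\gamma h^{-1}$ from Section \ref{LAgs}, which gives $\ggo_\delta=\ggo_{h_\pg^{-1}\cdot\gamma}=h_\pg^{-1}\ggo_\gamma h_\pg$. Substituting this into the invariance condition $\ad_{\mu}{\kg}|_{\pg}\subset\ggo_\delta$ and conjugating by $h_\pg$ produces exactly $h_\pg\ad_{\mu}{\kg}|_{\pg}h_\pg^{-1}\subset\ggo_\gamma$, which is \eqref{adkh}. Choosing any $h_\kg\in\Gl(\kg)$ and setting $h=(h_\kg,h_\pg)$, the element $h\cdot\mu$ lies in the prescribed subset, and Proposition \ref{const} identifies the given structure $\left(G_\mu/K_\mu,\delta\right)=\left(G_\mu/K_\mu,h_\pg^*\gamma\right)$ with $\left(G_{h\cdot\mu}/K_{h\cdot\mu},\gamma_{h\cdot\mu}\right)$ up to equivariant equivalence, completing the surjectivity.

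I do not expect any serious analytic or structural obstacle; the corollary is in essence a repackaging of Proposition \ref{const} together with the openness assumption \eqref{nondeg3}. The only delicate points are bookkeeping ones: keeping straight the contravariance of the $\Gl(\pg)$-action against the pullback through the identity $h_\pg^*\gamma=h_\pg^{-1}\cdot\gamma$, and using the nondegeneracy hypothesis correctly so that \emph{every} structure of the given class, rather than only those tautologically in the orbit of $\gamma$, is accounted for. Getting the inverse in the conjugation $\ggo_\delta=h_\pg^{-1}\ggo_\gamma h_\pg$ to align precisely with the form of \eqref{adkh} is the step most prone to sign-of-exponent slips and is where I would concentrate the care.
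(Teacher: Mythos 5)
Your proof is correct and follows the same route the paper intends: the corollary is stated there as an immediate consequence of Proposition \ref{const}, and your argument simply makes explicit the two directions, with the only genuinely needed extra input being the openness assumption \eqref{nondeg3} (so that every nondegenerate tensor of the given type is of the form $h_\pg^*\gamma$) together with the conjugation formula $\ggo_{h\cdot\gamma}=h\ggo_\gamma h^{-1}$, both of which you apply with the signs of the exponents handled correctly.
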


Assume that $\gamma$ is an $(r,s)$-tensor field, i.e.\ $r$-times covariant and $s$-times contravariant (e.g.\ a Riemannian metric is a $(2,0)$-tensor field).  It follows that
$$
(cI)\cdot\gamma=c^{s-r}\gamma, \qquad \theta(I)\gamma=(s-r)\gamma.
$$
By setting $h:=(I,\frac{1}{c}I)\in\Gl(\kg)\times\Gl(\pg)$, $c\ne 0$, we obtain from Proposition \ref{const} that the rescaled $G_\mu$-invariant geometric structure $c^{s-r}\gamma_\mu$ on $G_\mu/K_\mu$ is equivalent to the element of $\hca_{q,n}(\gamma)$ defined by $c\cdot\mu:=(I,\tfrac{1}{c}I)\cdot\mu$, that is,
\begin{equation}\label{scmu}
c\cdot\mu|_{\kg\times\kg}=\mu, \qquad c\cdot\mu|_{\kg\times\pg}=\mu, \qquad c\cdot\mu|_{\pg\times\pg}=c^2\mu_{\kg}+c\mu_{\pg},
\end{equation}
where the subscripts denote the $\kg$- and $\pg$-components of $\mu|_{\pg\times\pg}$ given by
\begin{equation}\label{decmu}
\mu(X,Y)=\mu_{\kg}(X,Y)+\mu_{\pg}(X,Y), \qquad \mu_{\kg}(X,Y)\in\kg, \quad \mu_{\pg}(X,Y)\in\pg, \quad\forall X,Y\in\pg.
\end{equation}
The $\RR^*$-action on $\hca_{q,n}(\gamma)$, $\mu\mapsto c\cdot\mu$, can therefore be considered as a {\it geometric scaling}: 
\begin{quote}
$(G_{c\cdot\mu}/K_{c\cdot\mu},\gamma)$ is equivalent to $(G_{\mu}/K_{\mu},c^{s-r}\gamma)$.
\end{quote}

\begin{remark}\label{rem5}
The above described geometric scaling is not allowed in the symplectic case, i.e. if one has fixed a nondegenerate $2$-form $\omega$, since the map $\frac{1}{c}I$ does not belong to $\Spe(n,\RR)$.
\end{remark}

Given any $\mu\in\hca_{q,n}$, if we define $\lambda\in\hca_{q,n}$ by $\lambda|_{\kg\times\ggo}:=\mu$, $\lambda|_{\pg\times\pg}:=0$ (note that conditions (h1)-(h3) also hold for $\lambda$), what we obtain is the Euclidean space
$$
G_{\lambda}/K_{\lambda}=(K\ltimes\RR^n)/K=\RR^n,
$$
for some closed subgroup $K\subset\Gl_n(\RR)$.  Note that $\lambda=\lim\limits_{c\to 0}c\cdot\mu$, so this can be viewed as $\left(G_{c\cdot\mu}/K_{c\cdot\mu},\gamma\right)$ converging in the pointed sense to the (flat) geometric structure $(\RR^n,\gamma)$, as $c\to 0$ (see Section \ref{converg}).

\subsection{Degenerations and pinching conditions}\label{deg-pin}
An elementary but crucial observation is that any kind of geometric quantity associated to the manifold $\left(G_\mu/K_\mu,\gamma\right)$ depends continuously on the Lie bracket $\mu\in\hca_{q,n}(\gamma)\subset\Lambda^2\ggo^*\otimes\ggo$.  This can be used to study pinching curvature properties as follows.

\begin{definition}\label{degen-def}
Given $\mu,\lambda\in\hca_{q,n}(\gamma)$, we say that $\mu$ {\it degenerates to} $\lambda$, denoted by $\mu\rightarrow\lambda$, if $\lambda$ belongs to the closure of the subset
$$
\big\{ h\cdot\mu:h=(h_\kg,h_\pg)\in\Gl(\kg)\times\Gl(\pg), \quad h_\pg\;\mbox{satisfies condition}\; \eqref{adkh} \big\}\subset\hca_{q,n}(\gamma),
$$
relative to the usual vector space topology of $\Lambda^2\ggo^*\otimes\ggo$.
\end{definition}

The geometric meaning of the above subset of $\hca_{q,n}(\gamma)$ has been explained in Corollary \ref{const-cor}.

\begin{proposition}\label{deg-pin}
If $\mu\rightarrow\lambda$, $\mu,\lambda\in\hca_{q,n}(\gamma)$, and $G_\lambda/K_\lambda$ admits a $G_\lambda$-invariant geometric structure satisfying a strict pinching curvature condition, then there is also a $G_\mu$-invariant structure on $G_\mu/K_\mu$ for which the same pinching curvature condition holds.
\end{proposition}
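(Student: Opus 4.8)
The plan is to exploit two things: that a \emph{strict} pinching condition cuts out an open set, and that every curvature quantity of $(G_\nu/K_\nu,\gamma_\nu)$ depends continuously on the bracket $\nu\in\hca_{q,n}(\gamma)$, as observed at the start of this subsection. Accordingly, let $P\subset\hca_{q,n}(\gamma)$ denote the set of brackets $\nu$ for which $(G_\nu/K_\nu,\gamma_\nu)$ satisfies the given strict pinching condition; by continuity together with strictness, $P$ is open in $\hca_{q,n}(\gamma)$.

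First I would reduce to the situation in which $\gamma_\lambda$ itself realizes the pinching. By Corollary \ref{const-cor} applied to $\lambda$, every $G_\lambda$-invariant structure of the class of $\gamma$ on $G_\lambda/K_\lambda$ is equivariantly equivalent to some $(G_{\lambda_0}/K_{\lambda_0},\gamma_{\lambda_0})$ with $\lambda_0=h_0\cdot\lambda$ in the parameterizing orbit of $\lambda$; since strict pinching is a curvature condition it is preserved by equivalences, so the given structure corresponds to a bracket $\lambda_0\in P$.

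Next I would feed in the degeneration $\mu\to\lambda$. By Definition \ref{degen-def} there is a sequence $h_k\cdot\mu\to\lambda$ with each $h_k=((h_k)_\kg,(h_k)_\pg)$ satisfying \eqref{adkh}, hence with $h_k\cdot\mu\in\hca_{q,n}(\gamma)$ for all $k$. Assuming $\lambda\in P$, openness of $P$ forces $h_k\cdot\mu\in P$ for all large $k$. By Proposition \ref{const}, $(G_{h_k\cdot\mu}/K_{h_k\cdot\mu},\gamma_{h_k\cdot\mu})$ is equivalent to $(G_\mu/K_\mu,(h_k)_\pg^*\cdot\gamma)$, which is a $G_\mu$-invariant structure of the class of $\gamma$ on $G_\mu/K_\mu$; since equivalences preserve curvature and $h_k\cdot\mu\in P$, this structure satisfies the strict pinching condition, as desired.

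I expect the main obstacle to be the reduction step, namely ensuring that after passing to the equivalent bracket $\lambda_0=h_0\cdot\lambda$ one still has $\lambda_0$ in the closure of the constrained orbit $\{h\cdot\mu:h\ \text{satisfies}\ \eqref{adkh}\}$, so that the openness argument can be run at $\lambda_0$ in place of $\lambda$. The naive candidate $(h_0h_k)\cdot\mu\to\lambda_0$ need not satisfy \eqref{adkh}, because $(h_0)_\pg$ conjugates $\ad_{h_k\cdot\mu}\kg|_\pg$ only approximately into $\ggo_\gamma$. When $q=0$ this difficulty is absent: \eqref{adkh} is vacuous, the orbit is the full $\Gl(\pg)$-orbit, and its closure is $\Gl(\pg)$-invariant, so $\mu\to\lambda_0$ holds automatically. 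For positive isotropy dimension I would either restrict attention to the case where $\gamma_\lambda$ itself is pinched, or arrange the degeneration so that the isotropy representation $\ad_{\bullet}\kg|_\pg$ is held fixed along $h_k\cdot\mu$, which makes the conjugation by $(h_0)_\pg$ land exactly in $\ggo_\gamma$ and restores the argument.
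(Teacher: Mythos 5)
Your argument is correct and is essentially the paper's own proof: take $h_k\cdot\mu\to\lambda$ with each $(h_k)_\pg$ satisfying \eqref{adkh}, use continuity of curvature quantities on $\hca_{q,n}(\gamma)$ together with strictness (openness) to get the pinching for $\left(G_{h_k\cdot\mu}/K_{h_k\cdot\mu},\gamma\right)$ for large $k$, and transfer it to the equivalent structure $(h_k)_\pg^*\gamma$ on $G_\mu/K_\mu$ via Proposition \ref{const}. The reduction issue you flag (when the pinched structure on $G_\lambda/K_\lambda$ is $\gamma_{h_0\cdot\lambda}$ rather than $\gamma_\lambda$) is a real subtlety that the paper's proof silently bypasses by treating $\gamma_\lambda$ itself as the pinched structure, so your caveat is a fair observation about the statement rather than a gap in your argument.
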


\begin{proof}
If $\mu\rightarrow\lambda$, then there is a sequence $h_k\in\Gl(\kg)\times\Gl(\pg)$ with $(h_k)_\pg$ satisfying \eqref{adkh} such that $h_k\cdot\mu\to\lambda$, as $k\to\infty$.  The pinching curvature condition will therefore hold for $\left( G_{h_k\cdot\mu}/K_{h_k\cdot\mu},\gamma\right)$ for sufficiently large $k$, which implies that it holds for the corresponding equivalent (see Proposition \ref{const}) invariant structure $(h_k)_\pg^*\gamma$ on $G_\mu/K_\mu$ for sufficiently large $k$, concluding the proof.
\end{proof}

\subsection{Convergence}\label{converg}
We follow the lines of the article \cite{spacehm} about convergence of homogeneous manifolds. In order to derive natural notions of convergence, of a sequence $(M_k=G_k/K_k,\gamma_k)$ of homogeneous spaces endowed with a geometric structure, to a homogeneous space $(M=G/K,\gamma)$, we start by requiring the existence of a sequence $\Omega_k\subset M$ of open neighborhoods of the origin $o\in M$ together with embeddings $\phi_k:\Omega_k\longrightarrow M_k$ such that $\phi_k(o)=o$ and $\phi_k^*\gamma_k\to \gamma$ {\it smoothly} as $k\to\infty$.  This means that the tensor field $\phi_k^*\gamma_k-\gamma$ and its covariant derivatives of all orders with respect to some fixed Riemannian metric on $M$ each converge uniformly to zero on compact subsets of $M$ eventually contained in all $\Omega_k$.  Equivalently, for a chart contained in all $\Omega_k$ for sufficiently large $k$, the partial derivative $\partial^\alpha(\gamma_k)_{J}$ of the coordinates $(\gamma_k)_{J}$ of the tensor fields converge to $\partial^\alpha \gamma_{J}$ uniformly, as $k\to\infty$, for any multi-indices $\alpha$ and $J$.
According to the different conditions one may require on the size of the $\Omega_k$'s, we have the following notions of convergence in increasing degree of strength:

\begin{itemize}
\item {\it infinitesimal}: no condition on $\Omega_k$, it may even happen that $\bigcap\Omega_k=\{ o\}$.  So $\phi_k^*\gamma_k\to\gamma$ smoothly as $k\to\infty$ {\it at} $p$, in the sense that for any $\epsilon>0$, there exists $k_0=k_0(\epsilon)$ such that for $k\geq k_0$,
$$
\sup_{\Omega_k} |\nabla^j(\phi_k^*\gamma_k-\gamma)|<\epsilon, \qquad\forall j\in\ZZ_{\geq 0}.
$$
The infinitesimal convergence of homogeneous manifolds is somewhat weak, notice that actually only the germs of the geometric structures at $o$ are involved. It is possible that all manifolds $M_k$, $M$ be pairwise non-homeomorphic.  In the case when the geometric structure contains a Riemannian metric (e.g. almost-hermitian structures) or univocally determines one (e.g. $G_2$-structures), the correspondence sequence $(M_k,g_k)$ has necessarily {\it bounded geometry} by homogeneity (i.e. for all $r>0$ and $j\in\ZZ_{\geq 0}$, $\sup\limits_k\;\sup\limits_{B_{g_k}(0,r)} |\nabla_{g_k}^j\Riem(g_k)|_{g_k}<\infty$).  However, the injectivity radius of the Riemannian manifolds $(M_k,g_k)$ may go to zero.

\item {\it local}: $\Omega_k$ stabilizes, i.e.\ there is a nonempty open subset $\Omega\subset\Omega_k$ for every $k$ sufficiently large.  Again, if a Riemannian metric is involved in the geometric structure, then there is a positive lower bound for the injectivity radii $\inj(M_k,g_k,o)$, which is often called the {\it non-collapsing} condition.

\item {\it pointed or Cheeger-Gromov}: $\Omega_k$ exhausts $M$, i.e. $\Omega_k$ contains any compact subset of $M$ for $k$ sufficiently large.  We note that in the homogeneous case, the location of the basepoints play no role, neither in the pointed convergence nor in the bounds considered in the items above, in the sense that we can change all of them by any other sequence of points and use homogeneity.  However, topology issues may still arise at this level of convergence.

\item {\it smooth (up to pull-back by diffeomorphisms)}: $\Omega_k=M$ and $\phi_k:M\longrightarrow M_k$ is a diffeomorphism for all $k$.  Thus $\phi_k^*\gamma_k$ converges smoothly to $\gamma$ uniformly on compact sets in $M$.  This necessarily holds for any sequence which is convergent in the pointed sense if $M=G/K$ is compact.
\end{itemize}

It follows at once from the definitions that these notions of convergence satisfy:

\begin{center}
smooth $\quad\Rightarrow\quad$ pointed $\quad\Rightarrow\quad$ local $\quad\Rightarrow\quad$ infinitesimal.
\end{center}

None of the converse assertions hold for homogeneous Riemannian manifolds (see \cite{spacehm}).  However, in the case when a Riemannian metric is involved, it is worth noticing that local convergence implies bounded geometry and non-collapsing for the sequence of metrics $g_k$ associated to the geometric structures $\gamma_k$, and thus there must exist a pointed convergent subsequence to a complete Riemannian manifold $(N,g)$ by the Compactness Theorem (see e.g. \cite{Ptr}).  In \cite[Section 7]{LtyWei}, the authors study pointed convergence of $G_2$-structures and prove a compactness result.

We may also consider convergence of the algebraic side of homogeneous spaces.  Recall from Section \ref{invgs} the space $\hca_{q,n}(\gamma)$ of Lie algebras parameterizing the set of all $n$-dimensional simply connected homogeneous spaces with $q$-dimensional isotropy endowed with an invariant geometric structure, which inherits the usual vector space topology from $\Lambda^2\ggo^*\otimes\ggo$.  We shall always denote by $\mu_k\to\lambda$ the convergence in $\hca_{q,n}(\gamma)$ relative to such topology.

The following result can be proved in much the same way as \cite[Theorem 6.12,(i)]{spacehm}.  We take this opportunity to make the following corrigenda: \cite[Theorem 6.12,(ii)]{spacehm} is false and in \cite[Corollary 6.20]{spacehm}, parts (ii)-(v) each follow from part (i), but none of the converse assertions is true.

\begin{theorem}\label{convmu}
 If $\mu_k\to\lambda$ in $\hca_{q,n}(\gamma)$, then $(G_{\mu_k}/K_{\mu_k},\gamma)$ converges to $(G_{\lambda}/K_{\lambda},\gamma)$ in the infinitesimal sense.
\end{theorem}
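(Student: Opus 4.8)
The plan is to realize the infinitesimal convergence through a common family of exponential coordinate charts and to show that, in these charts, the Taylor coefficients of the invariant tensor field at the origin depend polynomially---hence continuously---on the bracket $\mu$. First I would fix, for each $\mu\in\hca_{q,n}(\gamma)$ near $\lambda$, the chart $\phi_\mu\colon U\to G_\mu/K_\mu$, $\phi_\mu(X)=\exp_\mu(X)K_\mu$, where $\exp_\mu$ is the Lie exponential of $G_\mu$ and $U$ is a neighbourhood of $0$ in the \emph{fixed} vector space $\pg$. Since $d\phi_\mu|_0$ is the identity of $\pg$ under the identification $\pg\equiv T_o(G_\mu/K_\mu)$, each $\phi_\mu$ is an embedding on a (possibly $\mu$-dependent) neighbourhood of $0$; as we only aim at infinitesimal convergence, shrinking these domains towards $\{o\}$ is harmless.

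The heart of the argument is an explicit description of the pulled-back tensor field $\phi_\mu^*\gamma_\mu$ on $U\subset\pg$. By $G_\mu$-invariance, the value of $\gamma_\mu$ at $\phi_\mu(X)$ is the image of the fixed tensor $\gamma$ at the origin under translation by $\exp_\mu(X)$; expressing this in the coordinate frame produces a linear operator $P_\mu(X)\in\Gl(\pg)$, with $P_\mu(0)=I$, carrying $\gamma$ to the value of $\phi_\mu^*\gamma_\mu$ at $X$ (via the action recalled in Section \ref{LAgs}). The operator $P_\mu(X)$ is assembled from the differential of $\exp_\mu$ at $X$---the entire series $\tfrac{1-e^{-\ad_\mu X}}{\ad_\mu X}$---followed by the projection onto $\pg$ along $\kg$, which is compatible with $\mu$ because $\mu(\kg,\pg)\subset\pg$ by (h1). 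Consequently $X\mapsto P_\mu(X)$ is given by a power series in $X$ whose coefficients are polynomials in the structure constants of $\mu$ (and in the fixed $\gamma$).

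It then follows that for every multi-index $\alpha$ and every tensor-component index $J$, the partial derivative $\partial^\alpha(\phi_\mu^*\gamma_\mu)_J(0)$ is a polynomial, in particular a continuous, function of $\mu\in\Lambda^2\ggo^*\otimes\ggo$. Hence $\mu_k\to\lambda$ forces $\partial^\alpha(\phi_{\mu_k}^*\gamma_{\mu_k})_J(0)\to\partial^\alpha(\phi_\lambda^*\gamma_\lambda)_J(0)$ for all $\alpha,J$. Finally I would set $\phi_k:=\phi_{\mu_k}\circ\phi_\lambda^{-1}$ on a small neighbourhood $\Omega_k$ of $o\in G_\lambda/K_\lambda$ on which it is defined, so that $\phi_k(o)=o$ and $\phi_k^*\gamma_{\mu_k}=(\phi_\lambda^{-1})^*(\phi_{\mu_k}^*\gamma_{\mu_k})$; since pullback by the single fixed diffeomorphism $\phi_\lambda^{-1}$ is continuous on tensor fields, the convergence of all Taylor coefficients at $0$ established above yields $\phi_k^*\gamma_{\mu_k}\to\gamma$ smoothly at $o$, which is exactly infinitesimal convergence.

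I expect the main obstacle to be the bookkeeping in the second step: writing down $P_\mu(X)$ and verifying that the $\mu$-dependence of every Taylor coefficient is genuinely polynomial (equivalently, continuous). This requires combining the power series for $d\exp_\mu$ with the $\mu$-dependent reductive projection and checking that the isotropy directions, although fixed as the subspace $\kg$, interact with $\pg$ only through $\mu$ in a way that respects the reductive splitting; the compatibility is ensured by $\mu(\kg,\kg)\subset\kg$ and $\mu(\kg,\pg)\subset\pg$ from (h1). For a geometric structure given by a \emph{set} of tensors (e.g.\ an almost-hermitian structure) the argument applies to each tensor separately, so no new difficulty arises there.
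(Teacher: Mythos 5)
Your proposal is correct and follows essentially the same route as the paper, which proves this by deferring to \cite[Theorem 6.12(i)]{spacehm}: there the argument is precisely to pull back the invariant structure through the charts $X\mapsto\exp_\mu(X)K_\mu$ and observe that the resulting coefficients, namely $\proy_\pg\circ\tfrac{1-e^{-\ad_\mu X}}{\ad_\mu X}\big|_\pg$ applied to the fixed tensor $\gamma$, depend polynomially (hence continuously) on $\mu$. Your identification of the operator $P_\mu(X)$ and the composition $\phi_{\mu_k}\circ\phi_\lambda^{-1}$ is exactly the intended generalization from metrics to arbitrary tensors.
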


The converse does not hold in the Riemannian case (see \cite[Remark 6.13]{spacehm}).  As some sequences of Aloff-Walach spaces show (see \cite[Example 6.6]{spacehm}), in order to get the stronger local convergence from the usual convergence of brackets $\mu_k\to\lambda$, it is necessary (and also sufficient) to require an `algebraic' non-collapsing type condition.

\begin{definition}\label{Lie-inj}
The {\it Lie injectivity radius} of a Riemannian homogeneous space $\left(G_\mu/K_\mu,g\right)$ is the largest $r_\mu>0$ such that
$$
\pi_\mu\circ\exp_\mu: B(0,r_\mu)\longrightarrow G_\mu/K_\mu,
$$
is a diffeomorphism onto its image, where $\exp_\mu:\ggo\longrightarrow G_{\mu}$ is the Lie exponential map, $\pi_\mu:G_\mu\longrightarrow G_\mu/K_\mu$ is the usual quotient map and $B(0,r_\mu)$ denotes the euclidean ball of radius $r_\mu$ in $\pg$ relative to the fixed inner product $\ip=g(o)$.
\end{definition}

We note that $\exp_\mu$ is in general quite different from the Riemannian exponential map, unless the homogeneous space is naturally reductive.

The following is the analogue to \cite[Theorem 6.14, (ii)]{spacehm} and can be proved similarly.

\begin{theorem}\label{convmu2}
Assume that the geometric structures involved either each contains or determine a Riemannian metric.  Let $\mu_k$ be a sequence such that $\mu_k\to\lambda$ in $\hca_{q,n}(\gamma)$, as $k\to\infty$, and suppose that $\inf\limits_k r_{\mu_k}>0$, where $r_{\mu_k}$ is the Lie injectivity radius of $(G_{\mu_k}/K_{\mu_k},g)$.  Then, $(G_{\mu_k}/K_{\mu_k},\gamma)$ converges to $(G_{\lambda}/K_{\lambda},\gamma)$ in the local sense.
\end{theorem}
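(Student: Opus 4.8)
The plan is to realize both the approximating homogeneous spaces and the limit through their Lie exponential charts, and to use the uniform lower bound on the Lie injectivity radii to force these charts onto a \emph{common} domain. Write $F_\mu:=\pi_\mu\circ\exp_\mu$, so that by Definition~\ref{Lie-inj} the map $F_\mu\colon B(0,r_\mu)\subset\pg\to G_\mu/K_\mu$ is a diffeomorphism onto an open neighborhood of the origin $o$. Since $dF_\mu|_0=\mathrm{id}_\pg$, each $F_\mu$ is a local diffeomorphism at $0$ and in particular $r_\lambda>0$; hence $r_0:=\min\{\inf_k r_{\mu_k},\,r_\lambda\}>0$ and the open ball $B(0,r_0)$ lies inside every $B(0,r_{\mu_k})$ and inside $B(0,r_\lambda)$. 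I would then fix the open neighborhood $\Omega:=F_\lambda(B(0,r_0))$ of $o$ in $M=G_\lambda/K_\lambda$ and define the embeddings
$$
\phi_k:=F_{\mu_k}\circ F_\lambda^{-1}\colon\Omega\longrightarrow G_{\mu_k}/K_{\mu_k},
$$
each a diffeomorphism onto its image with $\phi_k(o)=o$. As $\Omega$ is independent of $k$, establishing that $\phi_k^*\gamma_{\mu_k}\to\gamma_\lambda$ smoothly on $\Omega$ is exactly local convergence in the sense of Section~\ref{converg}.

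The decisive point is the smooth dependence on $\mu$ of the coordinate expression of the invariant structure. Transporting everything to $B(0,r_0)$ by $F_\lambda$, it is enough to prove that $F_{\mu_k}^*\gamma_{\mu_k}\to F_\lambda^*\gamma_\lambda$ smoothly there. For this I would derive the closed formula, valid for $X\in B(0,r_0)$,
$$
\big(F_\mu^*\gamma_\mu\big)(X)=A(X,\mu)^{-1}\cdot\gamma, \qquad A(X,\mu):=\proy_\pg\circ\Big(\sum_{j\geq 0}\tfrac{(-\ad_\mu{X})^j}{(j+1)!}\Big)\big|_{\pg}\in\End(\pg),
$$
where $\proy_\pg\colon\ggo\to\pg$ is the projection along $\kg$ (so that $d\pi_\mu|_e=\proy_\pg$ under $\pg\equiv T_oM$) and $\cdot$ denotes the $\Gl(\pg)$-action on tensors. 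This comes from combining the standard left-trivialized expression for $d\exp_\mu|_X$ with the $G_\mu$-invariance of $\gamma_\mu$ and the normalization $\gamma_\mu(o)=\gamma$: evaluating $\gamma_\mu$ at $F_\mu(X)=\exp_\mu(X)\cdot o$ and pulling back to $o$ by $\exp_\mu(X)^{-1}$ collapses, via the equivariance $\pi_\mu\circ L_g=L_g\circ\pi_\mu$, exactly to the operator $A(X,\mu)$ acting on $\pg$.

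Once this formula is in place the conclusion follows at once. The operator $\ad_\mu{X}\in\End(\ggo)$ is literally $Y\mapsto\mu(X,Y)$, so $A(X,\mu)$ is an entire power series in $\mu$ and $X$, lives in the \emph{fixed} space $\End(\pg)$, and is jointly smooth in $(X,\mu)$; moreover $A(0,\mu)=\mathrm{id}_\pg$, so $A(X,\mu)$ is invertible throughout $B(0,r_0)$ (where $F_\mu$ is a diffeomorphism) with $A(X,\mu)^{-1}$ again smooth in $(X,\mu)$. Since $\mu_k\to\lambda$ in $\Lambda^2\ggo^*\otimes\ggo$, for every compact $C\subset B(0,r_0)$ and every multi-index $\alpha$ one gets $\sup_{X\in C}|\partial_X^\alpha(A(X,\mu_k)^{-1}\cdot\gamma-A(X,\lambda)^{-1}\cdot\gamma)|\to 0$, which is the asserted smooth convergence. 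I expect the main obstacle to be precisely the verification of the displayed formula for $A(X,\mu)$ together with its joint smoothness, i.e.\ organizing the dependence of the Lie exponential map and of its differential on the structure constants through the $\mathrm{dexp}$ operator; granting this, the bound $\inf_k r_{\mu_k}>0$ is exactly what upgrades the infinitesimal convergence of Theorem~\ref{convmu} to convergence on the fixed domain $\Omega$. The Riemannian metric hypothesis enters only to provide the reference inner product $\ip=g(o)$ defining the balls $B(0,r_\mu)$ and the radii $r_\mu$.
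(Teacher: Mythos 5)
Your proposal is correct and is essentially the argument the paper has in mind: the paper only says the statement ``can be proved similarly'' to \cite[Theorem 6.14,(ii)]{spacehm}, and that proof is precisely your construction of the comparison maps $\phi_k=(\pi_{\mu_k}\circ\exp_{\mu_k})\circ(\pi_\lambda\circ\exp_\lambda)^{-1}$ on the fixed ball $B(0,r_0)$ guaranteed by $\inf_k r_{\mu_k}>0$, combined with the smooth (indeed analytic) dependence on $(X,\mu)$ of the coordinate expression of $\gamma_\mu$ through the operator $\mathrm{dexp}(-\ad_\mu X)$. The one point worth making explicit is the uniform invertibility of $A(X,\mu_k)$ on compact subsets of $B(0,r_0)$ for large $k$, which follows from continuity of $\det A$ and $A(X,\lambda)\in\Gl(\pg)$ there, so that $A(X,\mu_k)^{-1}\to A(X,\lambda)^{-1}$ in $C^\infty$ as you assert.
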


\begin{corollary}\label{cor-conv}
Under the hypothesis of the previous theorem, there exists a subsequence of $(G_{\mu_k}/K_{\mu_k},g)$ which converges in the pointed sense to a homogeneous Riemannian manifold locally isometric to $(G_{\lambda}/K_{\lambda},g)$.
\end{corollary}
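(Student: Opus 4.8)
The plan is to derive the corollary by feeding the local convergence supplied by Theorem~\ref{convmu2} into the Cheeger--Gromov Compactness Theorem. First I would apply Theorem~\ref{convmu2}: since $\mu_k\to\lambda$ in $\hca_{q,n}(\gamma)$ and $\inf_k r_{\mu_k}>0$, the spaces $(G_{\mu_k}/K_{\mu_k},\gamma)$ converge to $(G_\lambda/K_\lambda,\gamma)$ in the local sense. Because the structures each contain or determine a Riemannian metric, this yields in particular a fixed open neighborhood $\Omega$ of the origin $o$, embeddings $\phi_k:\Omega\to G_{\mu_k}/K_{\mu_k}$ with $\phi_k(o)=o$ for all large $k$, and $\phi_k^*g\to g$ smoothly on $\Omega$, where I write $g$ for the invariant metric determined by $\gamma$ on each homogeneous space.

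Next I would verify the hypotheses of the Compactness Theorem for the pointed sequence $(G_{\mu_k}/K_{\mu_k},g,o)$. Bounded geometry is automatic by homogeneity, as recalled in Section~\ref{converg}: every curvature quantity depends continuously on $\mu\in\hca_{q,n}(\gamma)$, so $\mu_k\to\lambda$ forces $|\nabla_g^j\Riem(g)|_g$ at the origin to stay uniformly bounded in $k$ for every $j$, and by homogeneity the same bounds hold at every point. Non-collapsing is exactly what the Lie injectivity radius hypothesis provides: the local convergence above guarantees a uniform positive lower bound for $\inj(G_{\mu_k}/K_{\mu_k},g,o)$. Hence by the Compactness Theorem (see e.g.\ \cite{Ptr}) a subsequence of $(G_{\mu_k}/K_{\mu_k},g,o)$ converges in the pointed (Cheeger--Gromov) sense to a complete pointed Riemannian manifold $(N,g_\infty,o_\infty)$, which is moreover homogeneous since pointed limits of homogeneous manifolds with these uniform bounds inherit a transitive isometry group.

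It remains to identify $(N,g_\infty)$ up to local isometry. The pointed convergence furnishes an exhaustion of $N$ by neighborhoods of $o_\infty$ together with embeddings into $G_{\mu_k}/K_{\mu_k}$ whose pullbacks of $g$ converge smoothly to $g_\infty$; shrinking to the common domain $\Omega$ and comparing with the embeddings $\phi_k$ from the first step, both $g_\infty$ near $o_\infty$ and $g$ near $o$ in $G_\lambda/K_\lambda$ are realized as the smooth limit of (reparametrizations of) the same metrics $\phi_k^*g$. Uniqueness of smooth limits on $\Omega$ then produces an isometry between a neighborhood of $o_\infty$ in $(N,g_\infty)$ and a neighborhood of $o$ in $(G_\lambda/K_\lambda,g)$, which is precisely the assertion that $(N,g_\infty)$ is locally isometric to $(G_\lambda/K_\lambda,g)$.

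The main obstacle is this last identification: the Compactness Theorem only produces an abstract pointed limit, and the content lies in matching it with the prescribed algebraic limit $G_\lambda/K_\lambda$. The delicate point is to reconcile the two families of embeddings---those from Theorem~\ref{convmu2} and those from Cheeger--Gromov---on a common domain so that uniqueness of smooth limits applies; once the local convergence of the first step is available, this reduces to bookkeeping with the basepoints and reparametrizations, which is why the corollary can be proved in much the same way as the Riemannian statement in \cite[Section 6]{spacehm} on which it is modeled.
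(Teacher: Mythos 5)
Your proposal is correct and follows essentially the same route as the paper: Theorem \ref{convmu2} gives local convergence, which (as recalled in Section \ref{converg}) yields bounded geometry by homogeneity and non-collapsing, so the Compactness Theorem produces a pointed subsequential limit, identified as locally isometric to $(G_\lambda/K_\lambda,g)$ via the common neighborhood on which the metrics converge. This is exactly the argument the paper intends, modeled on \cite[Section 6]{spacehm}.
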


The limit for the pointed subconvergence may depend on the subsequence, as a certain sequence of alternating left-invariant metrics on $S^3$ (Berger spheres) and $\widetilde{\Sl_2}(\RR)$ shows (see \cite[Example 6.17]{spacehm}).

\section{Geometric flows}

We consider a geometric flow of the form
$$
\dpar\gamma=q(\gamma),
$$
where $\gamma=\gamma(t)$ is a one-parameter family of geometric structures on a given differentiable manifold $M$ and $q(\gamma)$ is a tensor field on $M$ of the same type as $\gamma$ associated to each geometric structure of a given class.  Usually $q(\gamma)$ is a curvature tensor, a Laplacian or the gradient field of some natural geometric functional.  Recall that a geometric structure may be defined by a set of tensor fields $\gamma$ (e.g. an almost-hermitian structure), so in that case the geometric flow will consist of a set of differential equations, one for each tensor.

Assume first that short-time existence and uniqueness of the solutions hold, which is usually the case for $M$ compact.  Our basic assumption is that the flow is invariant by diffeomorphisms, i.e. $q(\vp^*\gamma)=\vp^*\gamma$ for any $\vp\in\Diff(M)$.  In the case of flows of hermitian (resp. compatible) metrics on a fixed complex (resp. symplectic) manifold, it is enough to assume that $q$ is invariant by the group of all bi-holomorphic maps (resp. symplectomorphisms) rather than by the whole group $\Diff(M)$.   Any solution $\gamma(t)$ starting at a $G$-invariant geometric structure on a homogeneous space $M=G/K$ will therefore remain $G$-invariant for all $t$.  Indeed, $\vp^*\gamma(t)$ is also a solution for any $\vp\in G$ starting at $\vp^*\gamma(0)=\gamma(0)$ and hence $\vp^*\gamma(t)=\gamma(t)$ for all $t$ by uniqueness.  Consequently, if we fix a reductive decomposition $\ggo=\kg\oplus\pg$ of $G/K$ as in Section \ref{invgs}, then the flow equation is equivalent to an ODE for a one-parameter family $\gamma(t)$ of $\Ad(K)$-invariant tensors on $\pg$ of the form
\begin{equation}\label{flow}
\ddt \gamma(t) = q(\gamma(t)).
\end{equation}

Alternatively, without assuming short-time existence for the original flow, one can require $G$-invariance of $\gamma(t)$ for all $t$ and obtain in this way short-time existence and uniqueness of the solutions in the class of $G$-invariant metrics.  Recall that the set of all non-degenerate structures on $\pg$ is parameterized by the homogeneous space $\Gl(\pg)/G_\gamma$ for any fixed $\gamma$ among them, and the subset of those which are $\Ad(K)$-invariant is a submanifold of it.  As the $(1,1)$-tensor field $Q$ defined by $q$ is tangent to this submanifold by \eqref{adZQ}, the solution $\gamma(t)$ to \eqref{flow} stays $\Ad(K)$-invariant for all $t$.  If the uniqueness of solutions at least holds within a class of structures containing the $G$-invariant or homogeneous ones, as for the set of complete and with bounded curvature metrics in the Ricci flow case (see \cite{ChnZhu}), then this would imply, in turn, the $G$-invariance since the solution must preserve any symmetry of the initial geometric structure by arguing as above .

In any case, short-time existence (forward and backward) and uniqueness (among $G$-invariant ones) of the solutions are guaranteed.  The need for this circular argument is due to the fact that for most of the geometric flows studied in the literature, uniqueness of a solution is still an open problem in the noncompact general case, even for the Ricci flow (see \cite{Chn}).

\subsection{Bracket flow}\label{sec-BF}
Let $G/K$ be a simply connected homogeneous space ($G$ simply connected and $K$ connected) with reductive decomposition $\ggo=\kg\oplus\pg$.  Let $\gamma(t)$ be a solution to the geometric flow \eqref{flow} starting at $\gamma:=\gamma(0)$.  Since $\gamma(t)$ is nondegenerate (see \eqref{nondeg3}), we have that
$$
\gamma(t)\in\Gl(\pg)\cdot\gamma, \qquad\forall t.
$$
If say, $\gamma(t)=h(t)^*\gamma$, for $h(t)\in\Gl(\pg)$, then for each $t$, the geometric structure $(G/K,\gamma(t))$ is equivalent to $\left(G_{\mu(t)}/K_{\mu(t)},\gamma\right)$, where
$$
\mu(t):=\left[\begin{smallmatrix} I&0\\ 0&h(t) \end{smallmatrix}\right]\cdot\lb\in\hca_{q,n}(\gamma),
$$
by arguing as in \eqref{phi-equiv}. The following natural question arises:

\begin{quote}
How does the flow look on $\hca_{q,n}(\gamma)$?
\end{quote}

More precisely,

\begin{quote}
what is the ODE a curve $\mu(t)\in\hca_{q,n}(\gamma)$ must satisfy in order to yield a solution $\left(G_{\mu(t)}/K_{\mu(t)},\gamma\right)$ to the flow \eqref{flow} up to pullback by time-dependent diffeomorphisms?
\end{quote}

\begin{remark}\label{rem6}
Recall from Example \ref{exa}, (vi) and (vii) that $\Gl(\pg)$ is replaced with $\Gl(\pg,J)\simeq\Gl_n(\CC)$ or $\Spe(\pg,\omega)\simeq\Spe(n,\RR)$ in the complex and symplectic cases, respectively.
\end{remark}

It follows from \eqref{nondeg4} that for each $t$, there exists a unique operator $Q(t):=Q(\gamma(t))\in\qg_{\gamma(t)}\subset\End(\pg)$ such that
\begin{equation}\label{Qt}
q(\gamma(t))=\theta(Q(t))\gamma(t).
\end{equation}
Consider the solution $h(t)\in\Gl(\pg)$ to the ODE system
\begin{equation}\label{hode}
\ddt h(t)=-h(t)Q(t),\qquad h(0)=I,
\end{equation}
which is defined on the same interval of time as $\gamma(t)$.  If we set $\widetilde{\gamma}(t):=h(t)^{-1}\cdot\gamma$, and $h'(t):=\ddt h(t)$, then it is easy to see that
$$
\ddt\widetilde{\gamma}(t) = -\theta(h(t)^{-1}h'(t))h(t)^{-1}\cdot\gamma = \theta(Q(t))\widetilde{\gamma}(t).
$$
Thus $\gamma(t)$ and $\widetilde{\gamma}(t)$, as curves in the differentiable manifold $\Gl(\pg)/G_\gamma=\Gl(\pg)\cdot\gamma\subset T$ of nondegenerate tensors of the same type as $\gamma$ (see Section \ref{LAgs}), satisfy the same ODE (see \eqref{flow} and \eqref{Qt}).  Since $\gamma(0)=\widetilde{\gamma}(0)=\gamma$, they must coincide by uniqueness of the solution.  Thus
\begin{equation}\label{hgamma}
\gamma(t)=h(t)^{-1}\cdot\gamma=h(t)^*\gamma,
\end{equation}
where $h(t)$ is the family of invertible maps obtained in \eqref{hode}.  In the light of the approach proposed in Section \ref{hm}, this implies that if we consider the family of Lie brackets
$$
\mu(t):=\tilde{h}(t)\cdot\lb, \qquad \tilde{h}(t):=\left[\begin{smallmatrix} I&0\\ 0&h(t) \end{smallmatrix}\right]:\ggo\longrightarrow\ggo,
$$
where $\lb$ is the Lie bracket of $\ggo$, then $\mu(t)\in\hca_{q,n}(\gamma)$ for all $t$ by Proposition \ref{const} (recall that $\gamma(t)=h(t)^*\gamma$ is $G$-invariant) and the equivariant diffeomorphism defined in \eqref{phi-equiv},
$$
\vp(t):(G/K,\gamma(t))\longrightarrow \left(G_{\mu(t)}/K_{\mu(t)},\gamma\right),
$$
is an equivalence between geometric structures (i.e.\ $\vp(t)^*\gamma=\gamma(t)$).  In particular, by \eqref{Qmuh}, $q(\gamma(t))=h(t)^*q(\gamma)$, or equivalently, $Q_{\mu(t)}=h(t)Q(t)h(t)^{-1}$, where $Q_\mu$ is defined as in \eqref{defQmu}.  Thus $h'(t)=-Q_{\mu(t)}h(t)$, and since $\ddt\mu(t) = -\delta_{\mu(t)}(\tilde{h}'(t)\tilde{h}(t)^{-1})$ (see the computation of $\ddt\lambda$ in the proof of \cite[Theorem 3.3]{homRF}), we obtain that the family $\mu(t)\in \Lambda^2\ggo^*\otimes\ggo$ of brackets satisfies the following evolution equation, called the {\it bracket flow}:
\begin{equation}\label{BF}
\ddt\mu(t)=\delta_{\mu(t)}\left(\left[\begin{smallmatrix} 0&0\\ 0&Q_{\mu(t)} \end{smallmatrix}\right]\right), \qquad\mu(0)=\lb,
\end{equation}
where $\delta_\mu:\End(\ggo)\longrightarrow\Lambda^2\ggo^*\otimes\ggo$ is minus the derivative of the $\Gl(\ggo)$-action \eqref{action} and it is given by
\begin{equation}\label{delta}
\delta_\mu(A):=\mu(A\cdot,\cdot)+\mu(\cdot,A\cdot)-A\mu(\cdot,\cdot), \qquad\forall A\in\End(\ggo).
\end{equation}

The bracket flow is therefore the answer to the questions formulated at the beginning of the section.  Equation (\ref{BF}) is well defined since $Q_\mu$ can be computed for any $\mu\in\Lambda^2\ggo^*\otimes\ggo$, not only for $\mu\in\hca_{q,n}(\gamma)$ ($Q_\mu$ is usually polynomial on $\mu$).  However, as the following lemma shows, this technicality is only needed to define the ODE.

\begin{lemma}\label{muflow}
The set $\hca_{q,n}(\gamma)$ is invariant under the bracket flow, in the sense that if $\mu_0\in\hca_{q,n}(\gamma)$, then the bracket flow solution $\mu(t)\in\hca_{q,n}(\gamma)$ for all $t$ where it is defined.  Furthermore,
$$
\mu(t)(Z,X)\equiv\mu_0(Z,X), \qquad\forall Z\in\kg,\; X\in\ggo,
$$
i.e.\ only $\mu(t)|_{\pg\times\pg}$ is actually evolving.
\end{lemma}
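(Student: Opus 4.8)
The plan is to reduce both assertions to the correspondence between the bracket flow and the geometric flow established in the derivation of \eqref{BF}, and then to settle the ``furthermore'' statement by a short direct computation with $\delta_\mu$.

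For the invariance of $\hca_{q,n}(\gamma)$ I would \emph{not} try to check directly that the vector field $\mu\mapsto\delta_\mu\left(\left[\begin{smallmatrix} 0&0\\ 0&Q_\mu \end{smallmatrix}\right]\right)$ is tangent to $\hca_{q,n}(\gamma)$: although (h1), (h3) and (h4) are respectively closed, open and (for fixed $\mu$) linear, condition (h2) --- closedness of $K_\mu$ in $G_\mu$ --- is subtle and is not preserved under mere limits, so an infinitesimal tangency check is doomed. Instead I would use that the entire bracket flow solution stays inside a single $\Gl(\kg)\times\Gl(\pg)$-orbit. Running the construction preceding \eqref{BF} with $\mu_0$ in place of $\lb$ (any $\mu_0\in\hca_{q,n}(\gamma)$ is itself the bracket of a homogeneous space $G_{\mu_0}/K_{\mu_0}$ carrying the invariant structure $\gamma$), one produces $h(t)\in\Gl(\pg)$ solving $\ddt h=-Q_{\mu(t)}h$, $h(0)=I$, such that $\mu(t)=\tilde{h}(t)\cdot\mu_0$ with $\tilde{h}(t)=\left[\begin{smallmatrix} I&0\\ 0&h(t) \end{smallmatrix}\right]$ solves \eqref{BF} (cf.\ Theorem \ref{i-BF}). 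Since $\tilde{h}(t)\in\Gl(\kg)\times\Gl(\pg)$ is a Lie algebra isomorphism carrying $\kg$ to $\kg$, it preserves (h1)--(h3) (the groups $G_{\tilde{h}(t)\cdot\mu_0}$ and $G_{\mu_0}$ are isomorphic with $K$ mapping to $K$, so closedness is retained), while (h4) holds by Proposition \ref{const} because $h(t)$ satisfies \eqref{adkh} --- this last point following from the $\Ad(K)$-invariance of $\gamma(t)=h(t)^*\gamma$. Hence $\mu(t)\in\hca_{q,n}(\gamma)$, and by uniqueness of solutions of the ODE \eqref{BF} this is the bracket flow solution starting at $\mu_0$, on the same maximal interval.

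For the ``furthermore'' assertion I would compute $\ddt\mu(t)(Z,X)$ directly for $Z\in\kg$, $X\in\ggo$, now legitimately using $\mu:=\mu(t)\in\hca_{q,n}(\gamma)$. Writing $A:=\left[\begin{smallmatrix} 0&0\\ 0&Q_\mu \end{smallmatrix}\right]$ and $X=X_\kg+X_\pg$, the term $\mu(AZ,X)$ vanishes because $A|_\kg=0$, and (h1) forces $\mu(Z,X_\kg)\in\kg$, $\mu(Z,X_\pg)\in\pg$, so that
$$
\delta_\mu(A)(Z,X)=\mu(Z,Q_\mu X_\pg)-Q_\mu\,\mu(Z,X_\pg)=[\ad_\mu Z|_\pg,\,Q_\mu]\,X_\pg.
$$
By \eqref{adZQ} applied to the $G_\mu$-invariant operator $Q_\mu$ on $G_\mu/K_\mu$ this commutator vanishes, whence $\ddt\mu(t)(Z,X)=0$ and $\mu(t)(Z,X)\equiv\mu_0(Z,X)$.

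The step I expect to be the main obstacle is the invariance of $\hca_{q,n}(\gamma)$ --- concretely, ruling out that the solution escapes through condition (h2). This is exactly why the argument must be routed through the orbit picture (equivalently, through the fact that the bracket flow is the geometric flow conjugated by time-dependent diffeomorphisms) rather than through a tangency check; once one knows $\mu(t)$ remains in the $\Gl(\kg)\times\Gl(\pg)$-orbit of $\mu_0$, conditions (h1)--(h4) are automatic and the ``furthermore'' computation is routine.
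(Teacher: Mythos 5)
Your argument is essentially correct but routes through the geometric flow, whereas the paper's proof stays entirely inside the ODE \eqref{BF}. Two remarks on the comparison. First, your reason for rejecting a tangency check is aimed at a strawman: the paper does \emph{not} try to verify (h1)--(h4) infinitesimally; it checks that the right-hand side of \eqref{BF} is tangent to the orbit $H\cdot\mu$, $H=\left[\begin{smallmatrix} I&0\\ 0&\Gl(\pg)\end{smallmatrix}\right]$, so that $\mu(t)\in H\cdot\mu_0$ for all $t$ in the maximal interval --- and inside that orbit (h1), (h2) and the constancy of $\mu|_{\kg\times\kg}$ are automatic, exactly the point you make about isomorphisms preserving closedness. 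For the remaining conditions the paper then shows $\ad_{\mu(t)}Z|_\pg\equiv\ad_{\mu_0}Z|_\pg$ by a purely ODE-theoretic trick: $\psi=e^{\ad_{\mu_0}Z}$ satisfies $\psi\cdot\mu(t)=\mu(t)$ by uniqueness of solutions, whence $[\ad_{\mu_0}Z|_\pg,Q_{\mu(t)}]=0$, and then the constant map $\ad_{\mu_0}Z|_\pg$ solves the same linear ODE as $\ad_{\mu(t)}Z|_\pg$. Your version of this step --- invoking \eqref{adZQ} for $G_{\mu(t)}/K_{\mu(t)}$ to kill the commutator $[\ad_{\mu(t)}Z|_\pg,Q_{\mu(t)}]$ --- is shorter and perfectly valid, but only because you have already secured $\mu(t)\in\hca_{q,n}(\gamma)$ by other means; the paper's trick is what lets it avoid appealing to the geometric flow at all. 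What your route buys is conceptual transparency (everything follows from the correspondence with the flow on $G_{\mu_0}/K_{\mu_0}$); what the paper's buys is independence from the existence theory of the geometric flow and a cleaner logical order, since Theorem \ref{BF-thm}(ii) is itself proved using this lemma.

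The one genuine gap is the phrase ``on the same maximal interval.'' Your construction produces $h(t)$, hence $\tilde h(t)\cdot\mu_0\in\hca_{q,n}(\gamma)$, only on the maximal interval $(T_-,T_+)$ of the geometric flow solution $\gamma(t)$ on $G_{\mu_0}/K_{\mu_0}$; uniqueness of ODE solutions then identifies this with the bracket flow solution on the \emph{intersection} of the two intervals, and a priori the bracket flow could outlive $(T_-,T_+)$ --- precisely the situation where, as you yourself stress, one cannot take limits in $\hca_{q,n}(\gamma)$. This is fixable: if the bracket flow extended past $T_+$, then $\mu(t)$, hence $Q_{\mu(t)}$, would stay bounded up to $T_+$, so the linear ODE for $h(t)$ would yield an invertible limit $h(T_+)$ satisfying the closed condition \eqref{adkh}, and $\gamma(t)=h(t)^*\gamma$ would converge to a nondegenerate $\Ad(K)$-invariant tensor, contradicting maximality of $T_+$. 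Alternatively one can run your scheme in reverse order, as the paper does in proving Theorem \ref{BF-thm}(ii): solve the linear ODE $\ddt h=-Q_{\mu(t)}h$ \emph{along the bracket flow solution}, which exists on its whole maximal interval, and conclude $\mu(t)=\tilde h(t)\cdot\mu_0$ there. Either patch is a few lines, but as written the claim is not established for all $t$ where the bracket flow is defined.
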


\begin{proof}
We must check conditions (h1)-(h3) in Definition \ref{hqn} and condition (h4) at the beginning of Section \ref{invgs} for $\mu=\mu(t)$.  We first note that for each $\mu$, the field defined as the right hand side of (\ref{BF}) is tangent to the differentiable submanifold $H\cdot\mu\subset\Lambda^2\ggo^*\otimes\ggo$, where
$$
H:=\left[\begin{smallmatrix} I&0\\ 0&\Gl(\pg)
\end{smallmatrix}\right]\subset\Gl(\ggo).
$$
Thus such a field is tangent to $H\cdot\mu_0$ at every point $\mu\in H\cdot\mu_0$, and so its integral curve $\mu(t)\in H\cdot\mu_0$ for all $t$.  This implies that condition (h1) holds for $\mu(t)$ and that $\mu(t)|_{\kg\times\kg}=\mu_0$ for all t.

Let us now prove that $\mu(t)|_{\kg\times\pg}=\mu_0$ for all t.  For each $Z\in\kg$, consider $\psi:=e^{\ad_{\mu_0}{Z}}\in\Aut(\mu_0)$.  By using that
$\psi_\pg\cdot\gamma=\gamma$ for $\psi_\pg:=\psi|_{\pg}$, one obtains from \eqref{Qmuh} that $Q_{\psi.\mu}=\psi_{\pg}Q_{\mu}(\psi_{\pg})^{-1}$ for any $\mu$.  Thus the curve
$\lambda(t):=\psi\cdot\mu(t)$ satisfies
\begin{align*}
\ddt\lambda(t)=& \psi\cdot\ddt\mu(t)=\psi\cdot\left(\delta_{\mu(t)}\left(\left[\begin{smallmatrix} 0&0\\
0&Q_{\mu(t)}\end{smallmatrix}\right]\right)\right) \\
=& \delta_{\psi\cdot\mu(t)}\left(\psi\left[\begin{smallmatrix} 0&0\\ 0&Q_{\mu(t)}
\end{smallmatrix}\right]\psi^{-1}\right) = \delta_{\lambda(t)}\left(\left[\begin{smallmatrix} 0&0\\ 0&Q_{\lambda(t)}
\end{smallmatrix}\right]\right).
\end{align*}
But $\lambda(0)=\psi\cdot\mu_0=\mu_0$, so $\lambda(t)=\mu(t)$ for all $t$ by uniqueness of the solution.  Thus $\psi\cdot\mu(t)=\mu(t)$ for all $t$, which implies that $\psi_{\pg}$ commutes with $Q_{\mu(t)}$ and hence
\begin{equation}\label{muk2}
[\ad_{\mu_0}{Z}|_{\pg},Q_{\mu(t)}]= 0,\qquad\forall Z\in\kg.
\end{equation}
It follows from (\ref{BF}) that
$$
\ddt\ad_{\mu(t)}{Z}|_{\pg} = [\ad_{\mu(t)}{Z}|_{\pg},Q_{\mu(t)}],
$$
and since the same ODE is satisfied by the constant map $\ad_{\mu_0}{Z}|_{\pg}$, it follows that
\begin{equation}\label{admuz}
\ad_{\mu(t)}{Z}|_{\pg}\equiv\ad_{\mu_0}{Z}|_{\pg}, \qquad\forall Z\in\kg,
\end{equation}
that is, $\mu(t)|_{\kg\times\pg}\equiv\mu_0$.  Conditions (h3) is therefore satisfied by $\mu(t)$ for all $t$.  Finally, since $K_{\mu_0}$ is closed in $G_{\mu_0}$, it follows that $K_{\mu(t)}$ is also closed in $G_{\mu(t)}$ as it is the image of $K_{\mu_0}$ by the isomorphism between $G_{\mu_0}$ and $G_{\mu(t)}$ with derivative at the identity given by $\left[\begin{smallmatrix} I&0\\ 0&h(t) \end{smallmatrix}\right]$ (recall that $\mu(t)\in H\cdot\mu_0$ for all $t$).  This implies that condition (h2) holds for $\mu(t)$, concluding the proof of the lemma.
\end{proof}

According to Lemma \ref{muflow}, the bracket flow equation (\ref{BF}) can be rewritten by using \eqref{delta} as the simpler system
\begin{equation}\label{BFsis}
\left\{\begin{array}{ll}
\ddt\mu_{\kg}(t)=\mu_{\kg}(Q_{\mu(t)}\cdot,\cdot)+\mu_{\kg}(t)(\cdot,Q_{\mu(t)}\cdot), & \\
& \mu_{\kg}(0)+\mu_{\pg}(0)=\mu_0|_{\pg\times\pg},\\
\ddt\mu_{\pg}(t)=\delta_{\mu_\pg(t)}(Q_{\mu(t)}), &
\end{array}\right.
\end{equation}
where $\mu_{\kg}$ and $\mu_{\pg}$ are respectively the $\kg$- and $\pg$-components of $\mu|_{\pg\times\pg}$ defined in \eqref{decmu}.

We also conclude from Lemma \ref{muflow} that a homogeneous space $\left(G_{\mu(t)}/K_{\mu(t)},\gamma\right)$ endowed with an invariant geometric structure can indeed be associated to each $\mu(t)$ in a bracket flow solution provided that $\mu_0\in\hca_{q,n}(\gamma)$ (see \eqref{hsmu-gamma}).

We are finally in a position to state and prove the main result of this section.  Let $(G/K,\gamma)$ be a simply connected homogeneous space ($G$ simply connected and $K$ connected) endowed with a $G$-invariant geometric structure $\gamma$ and a reductive decomposition $\ggo=\kg\oplus\pg$. We consider the one-parameter families
$$
(G/K,\gamma(t)), \qquad \left(G_{\mu(t)}/K_{\mu(t)},\gamma\right),
$$
where $\gamma(t)$ is the solution to the geometric flow \eqref{flow} starting at $\gamma$ and $\mu(t)$ is the solution to the bracket flow \eqref{BF} starting at the Lie bracket $\lb$ of $\ggo$, the Lie algebra of $G$.  Recall that $\ggo=\kg\oplus\pg$ is a reductive decomposition for each of the homogeneous spaces involved.

\begin{theorem}\label{BF-thm}
There exist equivariant diffeomorphisms $\vp(t):G/K\longrightarrow G_{\mu(t)}/K_{\mu(t)}$ such that
$$
\gamma(t)=\vp(t)^*\gamma, \qquad\forall t.
$$
Moreover, each $\vp(t)$ can be chosen to be the equivariant diffeomorphism determined by the Lie group isomorphism $G\longrightarrow G_{\mu(t)}$ with derivative $\tilde{h}(t):=\left[\begin{smallmatrix} I&0\\ 0&h(t) \end{smallmatrix}\right]:\ggo\longrightarrow\ggo$, where $h(t):=d\vp(t)|_o:\pg\longrightarrow\pg$ is the solution to any of the following ODE's:
\begin{itemize}
\item[(i)] $\ddt h(t)=-h(t)Q(t)$, $h(0)=I$, where $Q(t)\in\qg_{\gamma(t)}\subset\End(\pg)$ is defined by
$$
\theta(Q(t))\gamma(t)=q(G/K,\gamma(t)).
$$
%\item[ ]
\item[(ii)] $\ddt h(t)=-Q_{\mu(t)} h(t)$, $h(0)=I$, where $Q_\mu\in\qg_{\gamma}\subset\End(\pg)$ is defined by
$$
\theta(Q_\mu)\gamma=q(G_\mu/K_\mu,\gamma).
$$
\end{itemize}
The following conditions also hold:
\begin{itemize}
\item[(iii)] $\gamma(t)=h(t)^*\gamma=h(t)^{-1}\cdot\gamma$.
\item[ ]
\item[(iv)] $\mu(t)=\tilde{h}(t)\cdot\lb$.
\end{itemize}
\end{theorem}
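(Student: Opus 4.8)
The plan is to treat the statement as the packaging of the facts assembled in the present section, organized around the two ODE's (i) and (ii), which I will show produce the same family $h(t)$ and hence the same diffeomorphisms $\vp(t)$. I would run everything through ODE (i) first and then deduce ODE (ii) from it. So first I let $h(t)\in\Gl(\pg)$ be the solution to ODE (i), $\ddt h(t)=-h(t)Q(t)$ with $h(0)=I$, which is defined on the same interval as $\gamma(t)$ since $Q(t)=Q(\gamma(t))$ is a smooth time-dependent operator. The identity $\gamma(t)=h(t)^*\gamma=h(t)^{-1}\cdot\gamma$, which is exactly (iii), is obtained by the uniqueness argument already carried out at \eqref{hgamma}: setting $\widetilde\gamma(t):=h(t)^{-1}\cdot\gamma$ and differentiating, one checks that $\widetilde\gamma(t)$ and $\gamma(t)$ solve the same ODE on the manifold $\Gl(\pg)\cdot\gamma$ of nondegenerate tensors, with the same initial value $\gamma$, so they coincide.

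Next I would define $\mu(t):=\tilde h(t)\cdot\lb$ with $\tilde h(t)=\left[\begin{smallmatrix}I&0\\0&h(t)\end{smallmatrix}\right]$, which is (iv). Since $\gamma(t)=h(t)^*\gamma$ is $G$-invariant, the $\Ad(K)$-invariance condition \eqref{adZQ} reads $\ad_{\lb}{\kg}|_{\pg}\subset\ggo_{\gamma(t)}=h(t)^{-1}\ggo_\gamma h(t)$, which is precisely hypothesis \eqref{adkh} of Proposition \ref{const} for $h=(I,h(t))$. Hence $\mu(t)\in\hca_{q,n}(\gamma)$, and the equivariant diffeomorphism $\vp(t)$ of \eqref{defphi} and \eqref{phi-equiv} is an equivalence of geometric structures, giving the asserted relation $\gamma(t)=\vp(t)^*\gamma$.

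Then I would verify that this $\mu(t)$ is actually the bracket flow solution named in the statement. Using $\ddt\mu(t)=-\delta_{\mu(t)}\big(\tilde h'(t)\tilde h(t)^{-1}\big)$ (as in the computation of $\ddt\lambda$ in \cite[Theorem 3.3]{homRF}) together with the covariance identity \eqref{Qmuh}, which yields $Q_{\mu(t)}=h(t)Q(t)h(t)^{-1}$, one finds $\tilde h'(t)\tilde h(t)^{-1}=-\left[\begin{smallmatrix}0&0\\0&Q_{\mu(t)}\end{smallmatrix}\right]$, so $\mu(t)$ satisfies \eqref{BF} with $\mu(0)=\lb$. By uniqueness for this ODE, whose right-hand side is polynomial in $\mu$, the curve $\mu(t)$ coincides with the bracket flow solution, so (iv) and the equivalence hold for it.

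Finally, the equivalence of the two descriptions of $h(t)$ is immediate from $Q_{\mu(t)}=h(t)Q(t)h(t)^{-1}$: the $h(t)$ produced by ODE (i) satisfies $\ddt h(t)=-h(t)Q(t)=-Q_{\mu(t)}h(t)$, i.e. ODE (ii); since (ii) is a linear time-dependent ODE with a unique solution through $h(0)=I$, its solution is exactly this $h(t)$, and the same $\vp(t)$ serves both. I expect the main obstacle to be the uniqueness step establishing (iii) — making precise that $\gamma(t)$ and $h(t)^{-1}\cdot\gamma$ are literally the same curve in $\Gl(\pg)\cdot\gamma$ — together with the correct transfer of the covariance identity \eqref{Qmuh}, since it is this identity that both reconciles the geometric-flow and bracket-flow pictures and converts ODE (i) into ODE (ii).
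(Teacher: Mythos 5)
Your proposal is correct, and for the main chain --- ODE (i) $\Rightarrow$ (iii) $\Rightarrow$ (iv) $\Rightarrow$ the equivalence $\gamma(t)=\vp(t)^*\gamma$ $\Rightarrow$ the bracket flow equation \eqref{BF} --- it is exactly the paper's argument: the paper's proof simply points back to the derivation \eqref{Qt}--\eqref{BF}, i.e.\ the uniqueness argument at \eqref{hgamma}, Proposition \ref{const} applied through condition \eqref{adkh}, and the covariance identity \eqref{Qmuh} giving $Q_{\mu(t)}=h(t)Q(t)h(t)^{-1}$. Where you genuinely diverge is the treatment of ODE (ii): you obtain it as a corollary of (i) by conjugation, $h'(t)=-h(t)Q(t)=-Q_{\mu(t)}h(t)$, and then invoke uniqueness for the linear equation (ii). The paper instead proves the converse implication from scratch: it takes $\mu(t)$ to be the bracket flow solution on \emph{its own} maximal interval, lets $h(t)$ solve (ii) there, checks that $\tilde{h}(t)\cdot\lb$ satisfies \eqref{BF} and hence equals $\mu(t)$, and then shows that $h(t)^*\gamma$ solves the geometric flow, so it coincides with $\gamma(t)$ by uniqueness. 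The payoff of that bidirectional version is that each flow is reconstructed from the other on the other's interval of existence, which is precisely what justifies the assertion, stated right after the theorem and used in Section \ref{sec-reg}, that the maximal intervals $(T_-,T_+)$ of the two flows coincide. Your one-directional argument only identifies the two ODE solutions on the interval where $\gamma(t)$ is already known to exist; to recover the interval statement you should either add the paper's converse argument or observe that a solution of (ii) on the bracket flow's full interval would, via (iii), extend $\gamma(t)$, contradicting maximality. This is a small but real difference in what the two write-ups deliver.
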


\begin{proof}
We have already proved through \eqref{Qt}-\eqref{BF} that part (i) implies all the other statements in the theorem.  Let us now assume that part (ii) holds, and so $h(t)$ is defined on the same time interval as $\mu(t)$.  Using that
$$
\ddt\tilde{h}(t)\cdot\lb = -\delta_{\tilde{h}(t)\cdot\lb}(\tilde{h}'(t)\tilde{h}(t)^{-1}),
$$
we obtain that $\tilde{h}(t)\cdot\mu_0$ satisfies the same ODE as $\mu(t)$ and it also starts at $\lb$.  Thus $\tilde{h}(t)\cdot\lb=\mu(t)\in\hca_{q,n}(\gamma)$ for all $t$ (i.e. part (iv) holds), from which easily follows that $h(t)$ satisfies (\ref{adkh}) and therefore $\widetilde{\gamma}(t):=h(t)^*\gamma$ defines a $G$-invariant structure on $G/K$ for all $t$.  Moreover, we have that the corresponding equivariant diffeomorphism
$$
\vp(t):(G/K,\widetilde{\gamma}(t))\longrightarrow \left(G_{\mu(t)}/K_{\mu(t)},\gamma\right)
$$
is an equivalence for all $t$ (see Proposition \ref{const}), which implies that $Q_{\mu(t)}=h(t)Q(\widetilde{\gamma}(t))h(t)^{-1}$ and so $h'(t)=-h(t)Q(\widetilde{\gamma}(t))$.  Thus $\gamma(t)$ is a solution to the flow \eqref{flow} by arguing as in (\ref{hgamma}), and consequently, $\widetilde{\gamma}(t)=\gamma(t)$ for all $t$ by uniqueness.  In this way, parts (i) and (iii) follow, concluding the proof of the theorem.
\end{proof}

The following useful facts are direct consequences of the theorem:

\begin{itemize}
\item The geometric flow solution $\gamma(t)$ and the bracket flow solution $\gamma_{\mu(t)}$ differ only by pullback by time-dependent diffeomorphisms.  So the behavior of any kind of geometric quantity can be addressed on the bracket flow, which provides a useful tool to study regularity questions on the flow (see Sections \ref{evol-norm} and \ref{sec-reg} below).

\item The flows are equivalent in the following sense: each one can be obtained from the other by solving the corresponding ODE in part (i) or (ii) and applying parts (iv) or (iii), accordingly.

\item The maximal interval of time $(T_-,T_+)$ where a solution exists is therefore the same for both flows.
\end{itemize}

The above theorem has also the following application on convergence, which follows from Corollary \ref{cor-conv}.  Recall the geometric scaling $c\cdot\mu$ given in \eqref{scmu}.

\begin{corollary}\label{conv}
Let $\mu(t)$ be a bracket flow solution and assume that $c_k\cdot\mu(t_k)\to\lambda\in\hca_{q,n}(\gamma)$, for some nonzero numbers $c_k\in\RR$ and a subsequence of times $t_k\to T_\pm$.  Assume that the geometric structure involved either contains or determines a Riemannian metric and that the corresponding Lie injectivity radii satisfy $\inf\limits_k r_{c_k\cdot\mu(t_k)}>0$. Then, after possibly passing to a subsequence, the Riemannian manifolds $\left(G/K,c_k^{-2}g(t_k)\right)$ converge in the pointed (or Cheeger-Gromov) sense to a Riemannian manifold locally isometric to $(G_\lambda/K_\lambda,g_0)$, as $k\to\infty$.  Here $g(t)$ is the family of $G$-invariant metrics associated to the geometric flow solution $\gamma(t)$ on $G/K$ starting at $\gamma$.
\end{corollary}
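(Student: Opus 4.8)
The plan is to reduce the assertion, via the dictionary furnished by Theorem \ref{BF-thm} together with the geometric scaling \eqref{scmu}, to a statement about convergence of Lie brackets, and then to invoke Corollary \ref{cor-conv} directly. Accordingly, set $\mu_k:=c_k\cdot\mu(t_k)$, so that by hypothesis $\mu_k\to\lambda$ in $\hca_{q,n}(\gamma)$ and $\inf_k r_{\mu_k}>0$, which is exactly the non-collapsing input that Theorem \ref{convmu2} and Corollary \ref{cor-conv} require.

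First I would record the two relevant equivalences at the level of Riemannian metrics. By Theorem \ref{BF-thm} the geometric flow solution satisfies $\gamma(t_k)=\vp(t_k)^*\gamma$ for equivariant diffeomorphisms $\vp(t_k):G/K\longrightarrow G_{\mu(t_k)}/K_{\mu(t_k)}$; passing to the associated metrics, this says $(G/K,g(t_k))$ is isometric to $(G_{\mu(t_k)}/K_{\mu(t_k)},g_0)$, where $g_0$ denotes the invariant metric with value $\ip$ at the origin determined by $\gamma$. On the other hand, since a Riemannian metric is a $(2,0)$-tensor one has $s-r=-2$, so the geometric scaling \eqref{scmu} gives that $(G_{\mu_k}/K_{\mu_k},g_0)=(G_{c_k\cdot\mu(t_k)}/K_{c_k\cdot\mu(t_k)},g_0)$ is isometric to $(G_{\mu(t_k)}/K_{\mu(t_k)},c_k^{-2}g_0)$, the metric determined by $c_k^{\,s-r}\gamma=c_k^{-2}\gamma$.

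Next I would simply compose these two isometries: multiplying the first by the constant $c_k^{-2}$ and chaining it with the second yields, for every $k$,
\[
(G/K,\ c_k^{-2}g(t_k)) \quad\text{is isometric to}\quad (G_{\mu_k}/K_{\mu_k},\ g_0).
\]
Thus the rescaled flow metrics on the fixed manifold $G/K$ coincide, up to isometry, with the canonical metrics attached to the converging sequence of brackets $\mu_k$.

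Finally I would apply Corollary \ref{cor-conv} to $\mu_k\to\lambda$: since $\inf_k r_{\mu_k}>0$, after passing to a subsequence $(G_{\mu_k}/K_{\mu_k},g_0)$ converges in the pointed (Cheeger--Gromov) sense to a Riemannian manifold locally isometric to $(G_\lambda/K_\lambda,g_0)$. As pointed convergence is invariant under isometries, the isometric copies $(G/K,c_k^{-2}g(t_k))$ converge to the same limit, which is the claim. I do not anticipate a genuine obstacle here, as all the analytic content (bounded geometry by homogeneity, non-collapsing from the Lie injectivity radius bound, and the Compactness Theorem) is already packaged in Theorem \ref{convmu2} and Corollary \ref{cor-conv}; the only point demanding care is the bookkeeping of the exponent $s-r=-2$ and the verification that the metric determined by $c^{-2}\gamma$ is precisely $c^{-2}$ times the one determined by $\gamma$, so that the two equivalences compose to the single displayed isometry.
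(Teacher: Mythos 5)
Your proposal is correct and follows exactly the route the paper intends: the paper merely asserts that the corollary ``follows from Corollary \ref{cor-conv}'' together with the geometric scaling \eqref{scmu}, and your argument supplies precisely that chain of isometries, with the exponent bookkeeping $s-r=-2$ handled correctly. Nothing further is needed.
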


We note that the limiting Lie group $G_\lambda$ in the above corollary might be non-isomorphic to $G$, and consequently, the limiting homogeneous space $G_\lambda/K_\lambda$ might be non-diffeomorphic, and even non-homeomorphic, to $G/K$.

At most one limit up to scaling can be obtained by considering different normalizations of the bracket flow.  More precisely, assume that $c(t)\cdot\mu(t)\to\lambda\ne 0$, as $t\to T_\pm$.  Then the limit $\tilde{\lambda}$ of any other converging normalization $a(t)\cdot\mu(t)$ necessarily satisfies $\tilde{\lambda}=c\cdot\lambda$ for some $c\in\RR$ (see \cite[Proposition 4.1,(iii)]{homRS}).  Recall that the above observation only concerns solutions which are not chaotic, in the sense that the $\omega$-limit is a single point.

\subsection{Evolution of the bracket norm}\label{evol-norm}
In addition to the direct sum $\ggo=\kg\oplus\pg$ fixed in \eqref{fixdec} and the tensor $\gamma$ fixed in \eqref{hqng}, we can also fix an inner product $\ip$ on $\ggo$ such that $\la\kg,\pg\ra=0$   The norm $|\mu|$ of a Lie bracket is therefore defined in terms of the corresponding canonical inner product on $\Lambda^2\ggo^*\otimes\ggo$ given by
\begin{equation}\label{ipg}
\la\mu,\lambda\ra:=\sum \la\mu(e_i,e_j),\lambda(e_i,e_j)\ra,
\end{equation}
where $\{ e_i\}$ is any orthonormal basis of $\ggo$.  The natural inner product on $\End(\ggo)$, $\la A,B\ra:=\tr{AB^t}$, is also determined by $\ip$.

We now compute the evolution equation for the norm $|\mu(t)|$ along a bracket flow solution $\mu(t)$.  Our motivation here is that this may be useful, for instance, to prove long-time existence.  Indeed, if $|\mu(t)|$ remained bounded then $\mu(t)$ would be defined for all $t\in[0,\infty)$ and long-time existence would follow for the original geometric flow solution $\gamma(t)$ by Theorem \ref{BF-thm}.

For each $\mu\in\Lambda^2\ggo^*\otimes\ggo$, consider the symmetric operator $\mm_{\mu_\pg}:\pg\longrightarrow\pg$ defined by
\begin{equation}\label{mm-def}
\tr{\mm_{\mu_\pg}E}=-\unc\la\delta_{\mu_\pg}(E),\mu_\pg\ra, \qquad\forall E\in\End(\pg).
\end{equation}
It follows that $M_{\mu_\pg}\in\sym(\pg)$ since for any $E\in\sog(\pg)$,
$$
\la\delta_{\mu_\pg}(E),\mu_\pg\ra = -\la\ddt\bigg|_0 e^{tE}\cdot\mu_\pg,\mu_\pg\ra = -\unm\ddt\bigg|_0|e^{tE}\cdot\mu_\pg|^2 = -\unm\ddt\bigg|_0|\mu_\pg|^2 = 0.
$$
Note that $M_{\mu_\pg}$ depends only on $\mu_\pg:\pg\times\pg\longrightarrow\pg$, the $\pg$-component of $\mu|_{\pg\times\pg}$ (see \eqref{decmu}).  It is easy to check that if $m:\Lambda^2\pg^*\otimes\pg\longrightarrow\sym(\pg)$ is the moment map for the natural action of $\Gl(\pg)$ on $\Lambda^2\pg^*\otimes\pg$ (see e.g.\ \cite{HnzSchStt} or \cite{cruzchica} and the references therein for more information on real moment maps), then
\begin{equation}\label{mmR}
m(\mu_{\pg})=\tfrac{4}{|\mu_{\pg}|^2} \mm_{\mu_{\pg}}.
\end{equation}
We collect in the following lemma three useful facts which easily follow from (\ref{mm-def}).

\begin{lemma}\label{prop}
For any $\mu\in\Lambda^2\ggo^*\otimes\ggo$, the following conditions hold:
\begin{itemize}
\item[(i)] If $\delta_{\mu_{\pg}}^t:\Lambda^2\pg^*\otimes\pg\longrightarrow\End(\pg)$ is the transpose of $\delta_{\mu_{\pg}}$, then
$$
\delta_{\mu_{\pg}}(I)=\mu_{\pg}, \qquad \delta_{\mu_{\pg}}^t(\mu_{\pg})=-4\mm_{\mu_{\pg}}.
$$
\item[(ii)] $\tr{\mm_{\mu_{\pg}}}=-\unc |\mu_{\pg}|^2$.

\item[(iii)] $\tr{\mm_{\mu_{\pg}}D}=0$ for any $D\in\Der(\mu_\pg)$.
\end{itemize}
\end{lemma}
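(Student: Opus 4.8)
The plan is to derive all three statements directly from the defining relation \eqref{mm-def} of $\mm_{\mu_\pg}$, by feeding it the three special inputs $E=I$, $E$ arbitrary (to extract the transpose), and $E=D$ a derivation. No genuine computation beyond unwinding the definition \eqref{delta} of $\delta_{\mu_\pg}$ is needed; the only point requiring care is the adjointness bookkeeping in part (i), where the symmetry of $\mm_{\mu_\pg}$ (already established just before the lemma) must be invoked.

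First I would record the elementary identity $\delta_{\mu_\pg}(I)=\mu_\pg$, which follows by substituting $A=I$ into \eqref{delta}: the first two terms each reproduce $\mu_\pg(\cdot,\cdot)$ and the last subtracts one copy, leaving $\mu_\pg$. With this in hand, part (ii) is immediate: setting $E=I$ in \eqref{mm-def} gives $\tr\mm_{\mu_\pg}=-\unc\la\delta_{\mu_\pg}(I),\mu_\pg\ra=-\unc\la\mu_\pg,\mu_\pg\ra=-\unc|\mu_\pg|^2$.

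For the transpose identity in part (i), I would use that $\delta_{\mu_\pg}^t$ is by definition the adjoint of $\delta_{\mu_\pg}$ with respect to the fixed inner products, so that $\la\delta_{\mu_\pg}^t(\mu_\pg),E\ra=\la\mu_\pg,\delta_{\mu_\pg}(E)\ra=-4\tr(\mm_{\mu_\pg}E)$ for every $E\in\End(\pg)$, the last equality being \eqref{mm-def}. The remaining step is to recognize $\tr(\mm_{\mu_\pg}E)=\la\mm_{\mu_\pg},E\ra$; this is exactly where the symmetry of $\mm_{\mu_\pg}$ enters, since for a symmetric operator the pairing $\tr(\mm_{\mu_\pg}E)$ annihilates the skew part of $E$ and hence agrees with $\la\mm_{\mu_\pg},E\ra=\tr(\mm_{\mu_\pg}E^t)$. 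As $E$ is arbitrary, comparing the two displays yields $\delta_{\mu_\pg}^t(\mu_\pg)=-4\mm_{\mu_\pg}$.

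Finally, for part (iii) I would note that $D\in\Der(\mu_\pg)$ is equivalent to $\delta_{\mu_\pg}(D)=0$: reading off \eqref{delta}, the derivation condition $D\mu_\pg(X,Y)=\mu_\pg(DX,Y)+\mu_\pg(X,DY)$ is precisely the vanishing of $\delta_{\mu_\pg}(D)$. Substituting $E=D$ into \eqref{mm-def} then gives $\tr(\mm_{\mu_\pg}D)=-\unc\la\delta_{\mu_\pg}(D),\mu_\pg\ra=0$. The only genuine subtlety in the whole argument is the symmetry-of-$\mm_{\mu_\pg}$ point in part (i); everything else is a direct substitution, so I expect no real obstacle.
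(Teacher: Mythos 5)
Your proposal is correct and follows exactly the route the paper intends: the paper offers no written proof beyond the remark that the three facts ``easily follow from \eqref{mm-def}'', and your argument supplies precisely those substitutions ($E=I$, $E$ arbitrary together with the symmetry of $\mm_{\mu_\pg}$ to identify $\tr(\mm_{\mu_\pg}E)$ with $\la\mm_{\mu_\pg},E\ra$, and $E=D$ with $\Der(\mu_\pg)=\Ker\delta_{\mu_\pg}$). No gaps.
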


We also need to introduce, for each $\mu\in\Lambda^2\ggo^*\otimes\ggo$, the skew-symmetric maps
$$
J_{\mu_\kg}(Z):\pg\longrightarrow\pg, \qquad \la J_{\mu_\kg}(Z)X,Y\ra=\la\mu_\kg(X,Y),Z\ra, \qquad\forall Z\in\kg, \quad X,Y\in\pg.
$$

\begin{proposition}\label{eqs}
If $\mu(t)$ is a bracket flow solution, then
$$
\ddt |\mu_\pg(t)|^2=-8\tr{Q_{\mu(t)}\mm_{\mu_\pg(t)}}, \qquad \ddt |\mu_\kg(t)|^2=-4\tr{Q_{\mu(t)}}\sum_{i=1}^q J_{\mu_\kg}(Z_i)^2,
$$
where $\{ Z_1,\dots,Z_q\}$ is an orthonormal basis of $\kg$.
\end{proposition}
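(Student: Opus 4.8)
The plan is to differentiate each squared norm directly, substitute the bracket-flow equations in the simplified form \eqref{BFsis}, and then read off the answer from the defining relation \eqref{mm-def} of $\mm_{\mu_\pg}$ and from the definition of the maps $J_{\mu_\kg}(Z)$. Throughout I write $Q:=Q_{\mu(t)}$ and suppress the $t$-dependence of $\mu_\pg$ and $\mu_\kg$.

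For the first identity I would start from $\ddt|\mu_\pg|^2=2\la\ddt\mu_\pg,\mu_\pg\ra$ and insert $\ddt\mu_\pg=\delta_{\mu_\pg}(Q)$ from \eqref{BFsis}. The definition \eqref{mm-def} of $\mm_{\mu_\pg}$ says precisely that $\la\delta_{\mu_\pg}(E),\mu_\pg\ra=-4\tr{\mm_{\mu_\pg}E}$ for every $E\in\End(\pg)$; taking $E=Q$ and using that the trace is symmetric yields $\ddt|\mu_\pg|^2=-8\tr{Q\mm_{\mu_\pg}}$ at once. This part is essentially a one-line consequence of \eqref{mm-def}.

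For the second identity I would differentiate over an orthonormal basis $\{e_i\}$ of $\pg$, obtaining $\ddt|\mu_\kg|^2=2\sum_{i,j}\la\ddt\mu_\kg(e_i,e_j),\mu_\kg(e_i,e_j)\ra$, and substitute $\ddt\mu_\kg(X,Y)=\mu_\kg(QX,Y)+\mu_\kg(X,QY)$ from \eqref{BFsis}. The two resulting terms are equal: swapping $i\leftrightarrow j$ in the second sum and using the skew-symmetry of $\mu_\kg$ twice, so that the signs cancel, turns it into the first. This reduces the right-hand side to $4\sum_{i,j}\la\mu_\kg(Qe_i,e_j),\mu_\kg(e_i,e_j)\ra$.

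The last step is to convert this sum into the claimed trace. Expanding $\mu_\kg(X,Y)=\sum_i\la J_{\mu_\kg}(Z_i)X,Y\ra Z_i$ in the orthonormal basis $\{Z_i\}$ of $\kg$, the $\kg$-inner product collapses by orthonormality of the $Z_i$; summing over $j$ first via $\sum_j\la v,e_j\ra\la w,e_j\ra=\la v,w\ra$ and then over $i$ produces $\tr{J_{\mu_\kg}(Z_i)^t J_{\mu_\kg}(Z_i)Q}$ for each $i$. Since each $J_{\mu_\kg}(Z_i)$ is skew-symmetric, $J_{\mu_\kg}(Z_i)^t J_{\mu_\kg}(Z_i)=-J_{\mu_\kg}(Z_i)^2$, and assembling the terms gives $\ddt|\mu_\kg|^2=-4\tr{Q\sum_{i=1}^q J_{\mu_\kg}(Z_i)^2}$. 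I expect this final index manipulation, namely tracking which sums collapse and the sign coming from skew-symmetry, to be the only mildly delicate point; there is no genuine obstacle.
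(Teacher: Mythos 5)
Your proposal is correct and follows essentially the same route as the paper: the first identity is read off from the defining relation of $\mm_{\mu_\pg}$ (the paper phrases this via the transpose $\delta_{\mu_\pg}^t(\mu_\pg)=-4\mm_{\mu_\pg}$ from Lemma \ref{prop}, which is the same computation), and the second is obtained by exactly the index manipulation you describe, expanding $\mu_\kg$ in the basis $\{Z_k\}$ via the maps $J_{\mu_\kg}(Z_k)$ and using their skew-symmetry to produce the $-\tr\bigl(Q\sum_k J_{\mu_\kg}(Z_k)^2\bigr)$ term.
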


\begin{proof}
It follows from \eqref{BFsis} and Lemma \ref{prop}, (i) that
$$
\ddt |\mu_\pg|^2= 2\la\ddt\mu_\pg,\mu_\pg\ra =2\la\delta_{\mu_\pg}(Q_\mu),\mu_\pg\ra = 2\la Q_\mu,\delta_{\mu_\pg}^t(\mu_\pg)\ra =-8\la Q_\mu,\mm_{\mu_\pg}\ra.
$$
To prove the evolution of $|\mu_\kg|^2$ we use orthonormal bases $\{ X_i\}$ and $\{ Z_k\}$ of $\pg$ and $\kg$, respectively, to compute using \eqref{BFsis}:
\begin{align*}
\ddt|\mu_\kg|^2 =& 2\la\ddt\mu_\kg,\mu_\kg\ra \\
=& 2\sum_{i,j}\la\mu_\kg(Q_\mu X_i,X_j),\mu_\kg(X_i,X_j)\ra + 2\sum_{i,j}\la\mu_\kg(X_i,Q_\mu X_j),\mu_\kg(X_i,X_j)\ra\\
= & 4\sum_{i,j}\la\mu_\kg(Q_\mu X_i,X_j),\mu_\kg(X_i,X_j)\ra = 4\sum_{i,j,k}\la\mu_\kg(Q_\mu X_i,X_j),Z_k\ra\la\mu_\kg(X_i,X_j),Z_k\ra \\
= & 4\sum_{i,j,k}\la J_{\mu_\kg}(Z_k)Q_\mu X_i,X_j\ra \la J_{\mu_\kg}(Z_k)X_i,X_j\ra = 4\sum_{i,k}\la J_{\mu_\kg}(Z_k)Q_\mu X_i,J_{\mu_\kg}(Z_k)X_i\ra \\
=& -4\tr{Q_\mu\sum_k J_{\mu_\kg}(Z_k)^2},
\end{align*}
concluding the proof of the proposition.
\end{proof}

\subsection{Regularity}\label{sec-reg}
In the presence of any geometric flow, a natural question is what is the simplest quantity that, as long as it remains bounded, it prevents the formation of a singularity.  In this section, as an application of the bracket flow approach developed in Section \ref{sec-BF}, we obtain a general regularity result for any invariant geometric flow solution on a homogeneous space.

Let $(T_-,T_+)$ denote the maximal interval of time existence for the bracket flow solution $\mu(t)$ (see \eqref{BF}), or equivalently, for the solution $(G/K,\gamma(t))$ to the geometric flow \eqref{flow} starting at a $G$-invariant geometric structure $(G/K,\gamma)$.  Recall that $-\infty\leq T_-<0<T_+\leq\infty$.  It follows from \eqref{BF} that
$$
\ddt|\mu|^2 \leq 2|\mu|\left|\ddt\mu\right| \leq 2C|Q_\mu||\mu|^2, \qquad \forall t,
$$
for some constant $C>0$ depending only on the norm of the representation
$$
\pi:\glg(\ggo)\longrightarrow\End(\Lambda^2\ggo^*\otimes\ggo), \qquad \pi(A)\mu:=-\delta_\mu(A).
$$
This implies that
$$
2C\int_0^s |Q_\mu| \; dt\geq \log{|\mu(s)|^2}-\log{|\mu_0|^2}, \qquad\forall s\in[0,T_+),
$$
and since $|\mu(t)|$ must blow up at a finite singularity $T_+<\infty$ (in the sense that $|\mu(t_k)|\to\infty$ for some subsequence $t_k\to T_+$), we obtain from Theorem \ref{BF-thm} that
$$
\int_0^{T_+} |Q(\gamma(t))|_t\; dt =\infty.
$$
Here the norm $|\cdot|_t$ corresponds to the inner product $\ip_t:=h(t)^*\ip$, where $h(t)\in\Gl(\pg)$ is as in Theorem \ref{BF-thm} (recall that $Q_\mu(t)=h(t)Q(\gamma(t))h(t)^{-1}$ for all $t$).  We note that if the metric $\ip$ is compatible with, or determined by the geometric structure $\gamma$ in some sense, then so is $\ip_t$ relative to $\gamma(t)$ for all $t$.

We can now use that $|j_{\gamma(t)}|_t\equiv |j_{\gamma}|$ (see \eqref{j-equiv}) to conclude that there exist a positive constant $C$ depending only on the dimension $n$ and the type $(r,s)$ of the geometric structures such that
\begin{equation}\label{qCQ}
|q(\gamma(t))|_t\geq C |Q(\gamma(t))|_t.
\end{equation}
Thus the following general regularity result in terms of the velocity of the flow follows.

\begin{proposition}\label{cor-reg}
Let $\dpar\gamma(t)=q(\gamma(t))$ be a geometric flow which is diffeomorphism invariant.  If a $G$-invariant solution $\gamma(t)$ on a homogeneous space $G/K$ has a finite singularity at $T_+$ (resp. $T_-$), then
$$
\int_0^{T_+} |q(\gamma(t))|_t\; dt =\infty \qquad \left(\mbox{resp.}\quad \int_{T_-}^0 |q(\gamma(t))|_t\; dt =\infty\right).
$$
\end{proposition}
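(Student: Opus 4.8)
The plan is to deduce the divergence of $\int |q(\gamma(t))|_t\, dt$ from the analogous statement for the associated operator $Q(\gamma(t))$, and to obtain the latter from a blow-up of the bracket norm along the equivalent bracket flow furnished by Theorem \ref{BF-thm}. Throughout I work with the time-dependent inner product $\ip_t:=h(t)^*\ip$ on $\pg$, where $h(t)$ is as in Theorem \ref{BF-thm}, so that all the norms $|\cdot|_t$ transform compatibly under the intertwining diffeomorphisms $\vp(t)$; the point that makes this bookkeeping work is that the isomorphism $j_{\gamma(t)}$ satisfies $|j_{\gamma(t)}|_t\equiv|j_\gamma|$ by the equivariance \eqref{j-equiv}, which is exactly what yields the pointwise comparison \eqref{qCQ}.

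First I would control the speed of the bracket flow. Writing the right-hand side of \eqref{BF} through the representation $\pi(A)\mu=-\delta_\mu(A)$ gives a bound $\left|\ddt\mu\right|\leq C|Q_\mu|\,|\mu|$, with $C$ depending only on $\|\pi\|$, and hence $\ddt|\mu|^2\leq 2C|Q_\mu|\,|\mu|^2$. Integrating this differential inequality on $[0,s]$ produces
$$
2C\int_0^s |Q_{\mu(t)}|\, dt\geq \log|\mu(s)|^2-\log|\mu_0|^2, \qquad\forall s\in[0,T_+).
$$

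The decisive ingredient is that $|\mu(t)|$ must blow up as $t\to T_+$ whenever $T_+<\infty$. Granting this, the right-hand side above is unbounded as $s\to T_+$, forcing $\int_0^{T_+}|Q_{\mu(t)}|\, dt=\infty$. Since $Q_{\mu(t)}=h(t)Q(\gamma(t))h(t)^{-1}$ by Theorem \ref{BF-thm}, and the map $A\mapsto h(t)Ah(t)^{-1}$ is an isometry from $(\End(\pg),|\cdot|_t)$ onto $(\End(\pg),|\cdot|)$, this reads $\int_0^{T_+}|Q(\gamma(t))|_t\, dt=\infty$. Feeding this into \eqref{qCQ}, namely $|q(\gamma(t))|_t\geq C|Q(\gamma(t))|_t$ with $C=C(n,r,s)>0$, yields $\int_0^{T_+}|q(\gamma(t))|_t\, dt=\infty$ at once; the statement at $T_-$ follows by reversing time.

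The main obstacle is the finite-time blow-up claim for $|\mu(t)|$, which is where the maximality of $(T_-,T_+)$ enters. The plan is the standard ODE escape argument: if $T_+<\infty$ but $|\mu(t)|$ remained bounded on $[0,T_+)$, then $\mu(t)$ would stay in a compact subset of $\Lambda^2\ggo^*\otimes\ggo$; since the right-hand side of \eqref{BF} is continuous in $\mu$ (indeed $Q_\mu$ is polynomial), its norm would be bounded there, so $\mu(t)$ would be uniformly Lipschitz and extend continuously to $t=T_+$, allowing the solution to be continued past $T_+$ and contradicting maximality. The only delicate point is that the limiting bracket must again lie in $\hca_{q,n}(\gamma)$ for the continuation to be a genuine bracket flow; this is exactly guaranteed by Lemma \ref{muflow}, which shows that $\hca_{q,n}(\gamma)$ is invariant and that $\mu|_{\kg\times\ggo}$ stays frozen, so only the component $\mu_\pg(t)$ needs to be controlled.
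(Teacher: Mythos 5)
Your proposal is correct and follows essentially the same route as the paper: bound $\ddt|\mu|^2\leq 2C|Q_\mu||\mu|^2$ via the representation $\pi$, integrate to get $2C\int_0^s|Q_{\mu}|\,dt\geq\log|\mu(s)|^2-\log|\mu_0|^2$, invoke blow-up of $|\mu(t)|$ at a finite singularity, transfer to $|Q(\gamma(t))|_t$ through the conjugation $Q_{\mu(t)}=h(t)Q(\gamma(t))h(t)^{-1}$, and finish with \eqref{qCQ}. The only difference is that you spell out the standard ODE escape argument (plus the invariance of $\hca_{q,n}(\gamma)$ from Lemma \ref{muflow}) for the blow-up of $|\mu(t)|$, which the paper asserts without detail.
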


This was proved for the pluriclosed flow on any compact manifold in \cite[Theorem 1.2]{StrTn3}, and may be considered as the analogous to N. Sesum's result on the Ricci flow (see \cite{Ssm}).  It also generalizes the result obtained in \cite[Corollary 6.2]{SCF} in the context of curvature flows for left-invariant almost-hermitian structures on Lie groups.

In addition to diffeomorphism invariance, in what follows, we shall assume that the tensor $q$ in the geometric flow equation \eqref{flow} satisfies the following scaling property for any $\gamma$:
\begin{equation}\label{scprq}
q(c\gamma) = c^\alpha q(\gamma), \qquad\forall c\in\RR^*,
\end{equation}
for some fixed $\alpha\in\RR$.  This does hold for most of the curvature tensors considered in different evolution equations in the literature.  We note that $\alpha=0$ if $q(\gamma)$ is for example the Ricci tensor of some connection associated to a metric or to an almost-hermitian structure $\gamma$, and that $\alpha=1/3$ for the Laplacian and any Dirichlet flow for $G_2$-structures (see \cite{LtyWei,WssWtt}).

Condition \eqref{scprq} is equivalent to
\begin{equation}\label{PQc}
Q(c\gamma)=c^{\alpha-1}Q(\gamma),  \qquad\forall c\in\RR^*.
\end{equation}
By using the fact observed in \eqref{scmu} that for any $c\ne 0$, the map $(I,\tfrac{1}{c}I)$ determines an equivalence of geometric structures
$$
(G_\mu/K_\mu,c^{s-r}\gamma)\longrightarrow (G_{c\cdot\mu}/K_{c\cdot\mu},\gamma),
$$
we obtain from \eqref{PQc} that the operators defined in \eqref{defQmu} satisfy
\begin{equation}\label{PQmuc}
Q_{c\cdot\mu} = c^{(r-s)(1-\alpha)}Q_\mu, \qquad\forall c\in\RR^*,
\end{equation}
for a tensor $\gamma$ of type $(r,s)$.

We now show that not only the supreme but actually $|\mu(t)|$ must converge to infinity as $t$ approaches a finite time singularity.  The proof of the following proposition is strongly based on the arguments used by R. Lafuente in \cite{Lfn} to prove that the scalar curvature controls the formation of singularities of homogeneous Ricci flows.

\begin{proposition}\label{prop-reg}
Assume that $T_+$ is finite (resp. $T_-$).  If $(r-s)(1-\alpha)>0$, then
$$
|\mu(t)|\geq\frac{C}{(T_+-t)^{\frac{1}{(r-s)(1-\alpha)}}}, \qquad \forall t\in[0,T_+)
$$
(resp. $|\mu(t)|\geq C(t-T_-)^{-1/(r-s)(1-\alpha)}$, $\forall t\in(T_-,0]$), for some positive constant $C$ depending only on $q+n$ and the geometric flow.  In particular, $|\mu(t)|\to\infty$, as $t\to T_{\pm}$.
\end{proposition}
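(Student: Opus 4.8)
The plan is to reduce everything to a scalar ODE comparison for $u(t):=|\mu(t)|^2$, in the spirit of the arguments that control singularity formation for the Ricci flow. First I would reuse the speed estimate from Section \ref{sec-reg}: from the bracket flow \eqref{BF} and the definition \eqref{delta} of $\delta_\mu$ one has $\left|\ddt\mu\right|=\left|\delta_\mu\left(\left[\begin{smallmatrix}0&0\\0&Q_\mu\end{smallmatrix}\right]\right)\right|\le C_1\,|Q_\mu|\,|\mu|$, where $C_1$ is the norm of the representation $\pi$; hence $\ddt u=2\la\ddt\mu,\mu\ra\le 2C_1\,|Q_\mu|\,u$.

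The crucial input is the scaling estimate $|Q_\mu|\le C_0\,|\mu|^{p}$, with $p:=(r-s)(1-\alpha)$, valid for all $\mu\in\hca_{q,n}(\gamma)$. To obtain it I would first observe that $Q$ is homogeneous of degree $p$ under the ordinary scaling $\mu\mapsto s\mu$, $s>0$. Indeed $s\mu=(s^{-1}I)\cdot\mu$, and writing $s^{-1}I=(s^{-1}I_\kg,I_\pg)(I_\kg,s^{-1}I_\pg)$ splits this action into the geometric scaling $(I_\kg,s^{-1}I_\pg)\cdot\mu=s\cdot\mu$ of \eqref{scmu}, followed by the element $(s^{-1}I_\kg,I_\pg)$, whose $\pg$-block is the identity and therefore leaves $Q$ unchanged by \eqref{Qmuh}. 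Thus $Q_{s\mu}=Q_{s\cdot\mu}=s^{p}Q_\mu$ by \eqref{PQmuc}. Taking $s=1/|\mu|$ then gives $|Q_\mu|\le |\mu|^{p}\sup_{|\nu|=1}|Q_\nu|=:C_0\,|\mu|^{p}$, the supremum being finite because $\nu\mapsto Q_\nu$ is polynomial, hence continuous on the compact unit sphere; in particular $C_0$ depends only on $q+n$ and on the flow.

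Combining the two bounds produces the differential inequality $\ddt u\le 2C_1C_0\,u^{1+p/2}$ on $[0,T_+)$, where $u>0$ throughout, since $\mu\equiv 0$ would contradict $\mu(0)=\lb\ne 0$ by uniqueness. As $p>0$, I would rewrite this as $\ddt\left(u^{-p/2}\right)\ge -pC_1C_0$ and integrate from $t_0$ to $t$, obtaining $u(t_0)^{-p/2}\le u(t)^{-p/2}+pC_1C_0\,(t-t_0)$. Letting $t=t_k\to T_+$ along a subsequence with $|\mu(t_k)|\to\infty$ — which exists because a finite singularity forces $\limsup_{t\to T_+}|\mu(t)|=\infty$ (Section \ref{sec-reg}) — the first term on the right vanishes, so $u(t_0)^{-p/2}\le pC_1C_0\,(T_+-t_0)$, that is $|\mu(t_0)|\ge C\,(T_+-t_0)^{-1/p}$ with $C=(pC_1C_0)^{-1/p}$. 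Finally, letting $t_0\to T_+$ yields the concluding assertion $|\mu(t)|\to\infty$, and the statement for $T_-$ follows by reversing time.

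I expect the homogeneity estimate for $Q_\mu$ to be the delicate step: one must separate the geometric scaling \eqref{PQmuc}, which acts only through the $\pg$-block, from the full scaling $\mu\mapsto s\mu$, verifying that the residual scaling of $\kg$ leaves $Q$ untouched, and one must secure a finite, $\mu_0$-independent constant $C_0$, which rests on $Q_\mu$ being polynomial in $\mu$. Once $|Q_\mu|\le C_0|\mu|^{p}$ is established, the remaining ODE comparison is entirely routine.
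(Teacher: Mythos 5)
Your proposal is correct and follows essentially the same route as the paper: the same homogeneity identity $Q_{s\mu}=s^{(r-s)(1-\alpha)}Q_\mu$ (the paper's \eqref{Qnmu}, obtained by the same factorization of the ordinary scaling into the geometric scaling and a $\kg$-only scaling), the same compactness argument on the unit sphere for the constant, and the same ODE comparison combined with the fact that $|\mu(t)|$ must blow up at a finite singularity. The only cosmetic difference is that you integrate $u^{-p/2}$ and pass to a blow-up subsequence, whereas the paper integrates the comparison ODE forward and bounds $T_+$ from below; these are equivalent.
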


\begin{proof}
Assume that $T_+<\infty$ (the proof for $-\infty<T_-$ is completely analogous).  It follows from \eqref{BF} that
\begin{equation}\label{reg1}
\left|\ddt\mu\right|\leq C_1|\mu|^{(r-s)(1-\alpha)+1}, \qquad\forall t,
\end{equation}
for some constants $C_1>0$ depending only on $q+n$ and the flow.  We are using here that
\begin{equation}\label{Qnmu}
Q_\mu=|\mu|^{(r-s)(1-\alpha)}Q_{\mu/|\mu|},
\end{equation}
a fact that follows by arguing as in \eqref{scmu} but for $h:=(|\mu|^{-1}I,|\mu|^{-1}I)$, and that the continuous map $\mu\mapsto Q_\mu$ attains a maximum value on the sphere of $\Lambda^2\ggo^*\otimes\ggo$ depending only on $q+n$.  This implies that
$$
\ddt|\mu|^2\leq 2C_1|\mu|^{(r-s)(1-\alpha)+2} = 2C_1(|\mu|^2)^{\frac{(r-s)(1-\alpha)}{2}+1},
$$
and so for any $t_0\in[0,T_+)$,
$$
|\mu(t)|^2\leq \left(-(r-s)(1-\alpha)C_1(t-t_0)+|\mu(t_0)|^{-(r-s)(1-\alpha)}\right)^{\frac{-2}{(r-s)(1-\alpha)}}, \qquad\forall t\in[t_0,T_+).
$$
Thus $T_+\geq t_0+\frac{|\mu(t_0)|^{-(r-s)(1-\alpha)}}{(r-s)(1-\alpha)C_1}$ since $|\mu(t)|$ must blow up at a singularity, concluding the proof of the proposition.
\end{proof}

The following corollary thus follows from \eqref{qCQ} and \eqref{Qnmu}.

\begin{corollary}\label{cor-reg}
Let $\dpar\gamma(t)=q(\gamma(t))$ be a geometric flow of type $(r,s)$ which is diffeomorphism invariant and such that the scaling property \eqref{scprq} holds for some $\alpha$, and assume that $(r-s)(1-\alpha)>0$.  If a $G$-invariant solution $\gamma(t)$ on a homogeneous space $G/K$ has a finite singularity at $T_+$ (resp. $T_-$), then
$$
|q(\gamma(t))|_t\geq\frac{C}{T_+-t}, \qquad \forall t\in[0,T_+)
$$
(resp. $|q(\gamma(t))|\geq C(t-T_-)^{-1}$, $\forall t\in(T_-,0]$), for some positive constant $C$ depending only on $q+n$ and the geometric flow.
\end{corollary}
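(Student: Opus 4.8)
The plan is to transfer the desired lower bound from the geometric flow to the bracket flow, where the blow-up of $|\mu(t)|$ has already been quantified in Proposition \ref{prop-reg}, and then read it back through \eqref{qCQ} and \eqref{Qnmu}. First I would apply \eqref{qCQ} to reduce the claim on the velocity to a claim on the operator: since $|q(\gamma(t))|_t\geq C|Q(\gamma(t))|_t$ with $C$ depending only on $n$ and $(r,s)$, it suffices to bound $|Q(\gamma(t))|_t$ from below. Here I would invoke the identities $Q_{\mu(t)}=h(t)Q(\gamma(t))h(t)^{-1}$ and $\ip_t=h(t)^*\ip$ already recorded in the derivation of \eqref{qCQ}. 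Conjugation by $h(t)$ is an isometry between $(\End(\pg),\ip_t)$ and $(\End(\pg),\ip)$ (the same computation that gives $|j_{\gamma(t)}|_t\equiv|j_\gamma|$), so $|Q(\gamma(t))|_t=|Q_{\mu(t)}|$, the latter taken with respect to the fixed inner product. The entire problem thus becomes the estimate $|Q_{\mu(t)}|\geq C/(T_+-t)$ on the bracket side.

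Writing $\beta:=(r-s)(1-\alpha)>0$, I would next use the scaling identity \eqref{Qnmu}, which reads $|Q_{\mu(t)}|=|\mu(t)|^{\beta}\,|Q_{\mu(t)/|\mu(t)|}|$. Proposition \ref{prop-reg} already furnishes $|\mu(t)|\geq C(T_+-t)^{-1/\beta}$ for all $t\in[0,T_+)$; raising this to the power $\beta>0$ yields $|\mu(t)|^{\beta}\geq C'/(T_+-t)$. Combining the two factors gives
$$
|q(\gamma(t))|_t\;\geq\; C\,|Q_{\mu(t)}|\;=\;C\,|\mu(t)|^{\beta}\,|Q_{\mu(t)/|\mu(t)|}|\;\geq\;\frac{C\,C'}{T_+-t}\,|Q_{\mu(t)/|\mu(t)|}|,
$$
so the corollary follows as soon as the normalized factor $|Q_{\mu(t)/|\mu(t)|}|$ stays bounded away from $0$ along the solution. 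The argument at $T_-$ is verbatim the same, using the corresponding half of Proposition \ref{prop-reg}.

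The main obstacle is therefore exactly to rule out $\inf_t|Q_{\mu(t)/|\mu(t)|}|=0$. The normalized brackets $\mu(t)/|\mu(t)|$ lie on the unit sphere of $\Lambda^2\ggo^*\otimes\ggo$, which is compact, and $\mu\mapsto Q_\mu$ is continuous; so if the factor degenerated along some $t_k\to T_+$, a subsequence of $\mu(t_k)/|\mu(t_k)|$ would converge to a unit bracket $\nu$ with $Q_\nu=0$, i.e.\ a fixed point of the bracket flow (a static, essentially flat normalized structure). The point to settle is that a genuine finite-time singularity is incompatible with the normalized velocity decaying to zero: the very differential inequalities of Lafuente type underlying Proposition \ref{prop-reg} should be pushed to show that the product $|\mu(t)|^{\beta}|Q_{\mu(t)/|\mu(t)|}|$ remains of exact order $1/(T_+-t)$, the slow decay of $|Q_{\mu(t)/|\mu(t)|}|$ being compensated by the faster growth of $|\mu(t)|^{\beta}$ forced by the blow-up. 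In concrete classes one can shortcut this by exhibiting a lower-order invariant comparable to $|\mu|^{\beta}$ that bounds $|Q_\mu|$ below directly (for the Ricci flow this is the role of the scalar curvature, whose size is comparable to $|\mu|^{2}$), which immediately gives $|Q_{\mu(t)}|\geq c\,|\mu(t)|^{\beta}$ and closes the estimate.
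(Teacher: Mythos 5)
Your reduction is exactly the chain the paper has in mind: \eqref{qCQ} passes from $q$ to $Q$, conjugation by $h(t)$ identifies $|Q(\gamma(t))|_t$ with $|Q_{\mu(t)}|$, and \eqref{Qnmu} together with Proposition \ref{prop-reg} produces the factor $|\mu(t)|^{\beta}\geq C'/(T_+-t)$ with $\beta:=(r-s)(1-\alpha)>0$. Up to that point everything is correct and follows the same route the paper indicates.

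The gap is the one you yourself flag and then do not close: a positive lower bound for $|Q_{\mu(t)/|\mu(t)|}|$. Compactness of the unit sphere of $\Lambda^2\ggo^*\otimes\ggo$ and continuity of $\nu\mapsto Q_\nu$ give only an \emph{upper} bound $|Q_\nu|\leq C_3$ there --- which is precisely what produces \eqref{reg1} and hence Proposition \ref{prop-reg} --- while the minimum of $|Q_\nu|$ over the sphere is $0$ in general (for the Ricci flow, for instance, a nonabelian flat unit-norm bracket has vanishing Ricci operator). So the product $|\mu(t)|^{\beta}\,|Q_{\mu(t)/|\mu(t)|}|$ is not controlled from below by $|\mu(t)|^{\beta}$ alone, and the scaling inequality $|Q_\mu|\leq C_3|\mu|^{\beta}$ that the blow-up argument supplies points in the wrong direction. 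Your final paragraph is a program rather than a proof: the only estimate available from the bracket flow ODE, namely $\bigl|\ddt|\mu|\bigr|\leq C_1|Q_\mu|\,|\mu|$, gives $\ddt|\mu|^{-\beta}\geq-\beta C_1|Q_{\mu/|\mu|}|$ and, after integrating to the singular time, at best a statement about $\limsup_{t\to T_+}(T_+-t)|Q_{\mu(t)}|$; it does not rule out that at some intermediate time the solution passes close to a nonzero static bracket, which is what the pointwise bound for \emph{every} $t\in[0,T_+)$ requires. The appeal to a scalar-curvature-like monotone quantity is flow-specific and is not carried out. To be fair, the paper's own justification is the same one-line derivation from \eqref{qCQ} and \eqref{Qnmu}, so you have correctly reconstructed its intended argument and located the step it leaves implicit; but as submitted your argument does not establish the statement.
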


\subsection{Self-similar solutions and soliton geometric structures}
A geometric structure $\gamma$ on a differentiable manifold $M$ will flow self-similarly along a geometric flow $\dpar\gamma=q(\gamma)$, in the sense that the solution $\gamma(t)$ starting at $\gamma$ has the form
$$
\gamma(t)=c(t)\vp(t)^*\gamma, \qquad \mbox{for some}\quad c(t)\in\RR^*, \quad \vp(t)\in\Diff(M),
$$
if and only if
$$
q(\gamma)=c\gamma+\lca_{X}\gamma, \qquad \mbox{for some}\quad c\in\RR, \quad X\in\chi(M)\; \mbox{(complete)},
$$
where $\lca_X$ denotes Lie derivative (see Remark \ref{rem3} concerning the complex and symplectic cases).  This can be proved as follows.  For $\gamma(t)=c(t)\vp(t)^*\gamma$, $c(0)=1$, $\vp(0)=id$, one has that
$$
\dpar\gamma(t) = c'(t)\vp(t)^*\gamma + c(t)\vp(t)^*\lca_{X(t)}\gamma,
$$
where $X(t)$ is the time-dependent family of vector fields generating the diffeomorphisms $\vp(t)$ (i.e. $X_{\vp(t)(p)}:=\dds|_{s=t}\vp(s)(p)$), and on the other hand, from the scaling property \eqref{scprq} and the diffeomorphism invariance of the flow, we have
$$
q(\gamma(t)) = c(t)^\alpha\vp(t)^*q(\gamma).
$$
The sufficiency therefore follows by evaluating at $t=0$ with $c=c'(0)$ and $X=X(0)=\ddt|_0\vp(t)$, and for the necessary part we can take
$$
c(t):=\left((1-\alpha)ct+1\right)^{1/(1-\alpha)},
$$
if $\alpha\ne 1$, which satisfies $c'(t)=cc(t)^\alpha$, $c(0)=1$ (and $c(t)=e^{ct}$ for $\alpha=1$) and consider the flow $\vp(t)$ generated by the time-dependent vector fields $X(t):=c(t)^{\alpha-1}X$.

In analogy to the terminology used in Ricci flow theory, in the case when $\alpha<1$, we call such $\gamma$ a {\it soliton geometric structure} and we say it is {\it expanding}, {\it steady} or {\it shrinking}, if $c>0$, $c=0$ or $c<0$, respectively.  Note that the maximal interval of existence $(T_-,T_+)$ for these self-similar solutions equals $(\tfrac{-1}{(1-\alpha)c},\infty)$, $(-\infty,\infty)$ and $(-\infty,\tfrac{-1}{(1-\alpha)c})$, respectively.

On homogeneous spaces, in view of the equivalence between any geometric flow and the corresponding bracket flow given by Theorem \ref{BF-thm}, we may also wonder about self-similarity for bracket flow solutions.  A natural way, as usual for any ODE system on a vector space, would be to consider solutions which only evolve by scaling.  From our `geometric scaling' given in \eqref{scmu}, what we obtain are bracket flow solutions of the form $\mu(t)=c(t)\cdot\mu_0$ for some $c(t)\in\RR^*$.  However, recall that $(G_{\mu(t)}/K_{\mu(t)},\gamma)$ is equivalent to $(G_{\mu_0}/K_{\mu_0},\gamma(t))$ for each $t$ (i.e. they coincide up to pull back by a diffeomorphism), and so geometrically $\mu(t)=c(t)\cdot\mu_0$ can indeed be viewed as a self-similar solution in the above sense.

\begin{theorem}\label{rsequiv}
For a simply connected homogeneous space $(G/K,\gamma)$ endowed with a $G$-invariant geometric structure $\gamma$, the following conditions are equivalent:
\begin{itemize}
\item[(i)] The bracket flow solution starting at $\lb$ is given by
$$
\mu(t)=c(t)\cdot\lb, \qquad\mbox{for some}\quad c(t)>0, \quad c(0)=1,
$$
or equivalently,
$$
\mu(t)|_{\kg\times\ggo}\equiv\lb, \qquad \mu_{\kg}(t)=c(t)^2\lb_{\kg}, \qquad \mu_{\pg}(t)=c(t)\lb_{\pg}.
$$
\item[(ii)] The operator $Q(\gamma)\in\qg_\gamma\subset\End(\pg)$ such that $\theta(Q(\gamma))=q(\gamma)$ satisfies
$$
Q(\gamma)=cI+D_\pg, \qquad \mbox{for some} \quad c\in\RR, \quad D=\left[\begin{smallmatrix} 0&0\\ 0&D_\pg \end{smallmatrix}\right] \in\Der(\ggo).
$$
\end{itemize}
In that case, if we set $a_{r,s,\alpha}:=(s-r)(1-\alpha)$, then the geometric flow solution starting at $\gamma$ is given by
$$
\gamma(t) = b(t) e^{s(t)D_\pg}\cdot\gamma,
$$
where
$$
b(t)=\left(a_{r,s,\alpha}ct+1\right)^{1/(1-\alpha)},\qquad s(t)=\tfrac{1}{a_{r,s,\alpha}c}\log(a_{r,s,\alpha}ct+1),
$$
and $(G/K,\gamma)$ is a soliton geometric structure with
$$
q(\gamma)=(s-r)c\gamma-\lca_{X_D}\gamma,
$$
where $X_D$ denotes the vector field on $G/K$ defined by the one-parameter subgroup of $\Aut(G)$ attached to the derivation $D$.
\end{theorem}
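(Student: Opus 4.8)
The plan rests on a single linear-algebra observation that links the bracket flow to derivations. For any $\mu\in\Lambda^2\ggo^*\otimes\ggo$ and any $A\in\End(\ggo)$, the defining formula $\delta_\mu(A)=\mu(A\cdot,\cdot)+\mu(\cdot,A\cdot)-A\mu(\cdot,\cdot)$ shows immediately that $\delta_\mu(A)=0$ \emph{if and only if} $A\in\Der(\mu)$. Since the right-hand side of the bracket flow \eqref{BF} is $\delta_{\mu(t)}$ applied to $\left[\begin{smallmatrix}0&0\\0&Q_{\mu(t)}\end{smallmatrix}\right]$, the condition ``$\mu(t)$ evolves only by scaling'' can be tested by membership in $\Der$. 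Writing $P:=\left[\begin{smallmatrix}0&0\\0&I\end{smallmatrix}\right]$, one checks directly against \eqref{scmu} that $c\cdot\lb=e^{-(\log c)P}\cdot\lb$, so a curve $\mu(t)=c(t)\cdot\lb$ with $\sigma:=\log c$ satisfies $\ddt\mu(t)=\sigma'(t)\,\delta_{\mu(t)}(P)$, because $\delta_\mu$ is minus the infinitesimal generator of the $\Gl(\ggo)$-action.

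For (i)$\Rightarrow$(ii) I substitute this into \eqref{BF}: comparing $\ddt\mu(t)=\sigma'(t)\,\delta_{\mu(t)}(P)$ with the bracket flow gives $\delta_{\mu(t)}\!\left(\left[\begin{smallmatrix}0&0\\0&Q_{\mu(t)}-\sigma'(t)I\end{smallmatrix}\right]\right)=0$, hence $\left[\begin{smallmatrix}0&0\\0&Q_{\mu(t)}-\sigma'(t)I\end{smallmatrix}\right]\in\Der(\mu(t))$ for every $t$. Evaluating at $t=0$, where $\mu(0)=\lb$ and $Q_{\mu(0)}=Q(\gamma)$, and setting $c:=\sigma'(0)$, yields exactly $Q(\gamma)=cI+D_\pg$ with $D=\left[\begin{smallmatrix}0&0\\0&D_\pg\end{smallmatrix}\right]\in\Der(\ggo)$, which is (ii). For the converse (ii)$\Rightarrow$(i) the idea is to construct the scaling solution by hand and invoke uniqueness of the bracket flow ODE. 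Since $\Der(c\cdot\lb)=h\,\Der(\lb)\,h^{-1}$ for the scaling map $h=(I,\tfrac1c I)$, and $D$ is block-diagonal and commutes with $h$, one gets $D\in\Der(c\cdot\lb)$ for all $c$. Using the scaling covariance \eqref{PQmuc}, $Q_{c\cdot\lb}=c^{(r-s)(1-\alpha)}(cI+D_\pg)$, so $\mu(t):=c(t)\cdot\lb$ solves the bracket flow precisely when $\left[\begin{smallmatrix}0&0\\0&Q_{\mu(t)}-\sigma'(t)I\end{smallmatrix}\right]=c(t)^{(r-s)(1-\alpha)}D+\big(c(t)^{(r-s)(1-\alpha)}c-\sigma'(t)\big)P$ lies in $\Der(\mu(t))$. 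The first summand always does, so I simply choose $c(t)$ to solve the scalar ODE $\sigma'(t)=c\,c(t)^{(r-s)(1-\alpha)}$, $c(0)=1$, which cancels the $P$-term; then $\mu(t)=c(t)\cdot\lb$ and the genuine bracket flow solution satisfy the same ODE with the same initial value and must coincide.

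For the explicit solution I integrate this scalar ODE, obtaining $c(t)=(1+a_{r,s,\alpha}ct)^{1/a_{r,s,\alpha}}$, and then solve the defining ODE $\ddt h(t)=-Q_{\mu(t)}h(t)$ of Theorem \ref{BF-thm}(ii) with the commuting ansatz $h(t)=\lambda(t)e^{\rho(t)D_\pg}$. Matching the $I$- and $D_\pg$-parts forces $\lambda(t)=c(t)^{-1}$ and $\rho'(t)=-c(t)^{(r-s)(1-\alpha)}$. Using $\gamma(t)=h(t)^{-1}\cdot\gamma$ from Theorem \ref{BF-thm}(iii) together with $(cI)\cdot\gamma=c^{s-r}\gamma$ then rewrites $\gamma(t)=b(t)e^{s(t)D_\pg}\cdot\gamma$ with $b(t)=c(t)^{s-r}$ and $s(t)=-\rho(t)$; a one-line exponent check (using $a_{r,s,\alpha}=(s-r)(1-\alpha)$) shows these are exactly the stated closed forms.

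Finally, to read off the soliton identity I realize $e^{s(t)D_\pg}\cdot\gamma$ as $\vp(t)^*\gamma$, where $\vp(t)$ is the diffeomorphism of $G/K$ induced by the automorphism $e^{-s(t)D}\in\Aut(G)$ (legitimate since $D|_\kg=0$ implies $e^{-s(t)D}$ preserves $K$). Thus $\gamma(t)=b(t)\vp(t)^*\gamma$ is self-similar, and the criterion proved just before the theorem gives $q(\gamma)=b'(0)\gamma+\lca_X\gamma$; computing $b'(0)=(s-r)c$ and $X=\ddt|_0\vp(t)=-X_D$ produces $q(\gamma)=(s-r)c\,\gamma-\lca_{X_D}\gamma$. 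The only real friction I expect is bookkeeping: keeping the exponents $(r-s)$ versus $(s-r)$, the conventions $h^*=h^{-1}\cdot$ and $(cI)\cdot\gamma=c^{s-r}\gamma$, and the sign of the generating vector field straight. The entire conceptual content is carried by the equivalence $\delta_\mu(A)=0\Leftrightarrow A\in\Der(\mu)$ together with the scaling covariance \eqref{PQmuc}; once these are in hand the rest is integration and matching constants.
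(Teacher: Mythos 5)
Your argument is correct and all four pieces (both implications, the explicit form of $\gamma(t)$, and the soliton identity) check out; the exponent bookkeeping with $a_{r,s,\alpha}=(s-r)(1-\alpha)$ and the sign conventions all come out right. The difference from the paper lies mainly in the direction (ii)$\Rightarrow$(i): the paper first writes down the candidate geometric flow solution $\gamma(t)=b(t)e^{s(t)D_\pg}\cdot\gamma$, verifies directly that it solves $\dpar\gamma=q(\gamma)$ using $\theta(Q(\gamma))\gamma=c(s-r)\gamma+\theta(D_\pg)\gamma$, and only then passes to the bracket side via Theorem \ref{BF-thm} to conclude $\mu(t)=e^{cs(t)}\cdot\lb$; you instead verify the scaling ansatz $\mu(t)=c(t)\cdot\lb$ directly against the bracket flow ODE, using $\Ker\delta_\mu=\Der(\mu)$, the conjugation-invariance $D\in\Der(c\cdot\lb)$, and the covariance \eqref{PQmuc} to reduce everything to the scalar ODE $\sigma'=c\,c(t)^{(r-s)(1-\alpha)}$, and then recover $\gamma(t)$ by integrating $h'=-Q_{\mu(t)}h$. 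Your route keeps the whole equivalence on the Lie-algebra side and makes the role of the derivation more transparent (it is exactly the part of $Q_\mu$ killed by $\delta_\mu$), at the cost of invoking ODE uniqueness for the bracket flow; the paper's route has the minor advantage of producing the closed formula for $\gamma(t)$ as the primary object rather than as a by-product. For the final soliton identity you appeal to the general self-similarity criterion proved just before the theorem, with $X=\ddt|_0\vp(t)=-X_D$, whereas the paper computes $\lca_{X_D}\gamma(o)=-\theta(D_\pg)\gamma$ directly and substitutes into $q(\gamma)=\theta(Q(\gamma))\gamma$; these are equivalent, and your identification of the generator of the reparametrized one-parameter group $e^{-s(t)D}$ at $t=0$ is the same computation as the paper's \eqref{Lder} in disguise. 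No gaps.
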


\begin{proof}
Assume first that part (i) holds.  By taking derivatives at $t=0$ we obtain that
$$
\delta_{\lb}\left(\left[\begin{smallmatrix} 0&0\\ 0& c'(0)I \end{smallmatrix}\right]\right)
= \mu'(0) = \delta_{\lb}\left(\left[\begin{smallmatrix} 0&0\\ 0&Q(\gamma)\end{smallmatrix}\right]\right),
$$
from which part (ii) follows with $D=\left[\begin{smallmatrix} 0&0\\ 0&Q(\gamma)-c'(0)I\end{smallmatrix}\right]\in\Der(\ggo)$ and $c=c'(0)$.

It is easy to see that for $\gamma(t)= b(t)e^{s(t)D_\pg}\cdot\gamma$, $b(0)=1$, $s(0)=0$, one has
$$
\ddt\gamma(t) = b'(t)e^{s(t)D_\pg}\cdot\gamma + b(t)e^{s(t)D_\pg}\cdot\theta(s'(t)D_\pg)\gamma,
$$
and on the other hand,
$$
q(\gamma(t)) = b(t)^\alpha e^{s(t)D_\pg}\cdot q(\gamma) = b(t)^\alpha e^{s(t)D_\pg}\cdot\theta(Q(\gamma))\gamma.
$$
Assume that part (ii) holds.  It follows that
$$
\theta(Q(\gamma))=c\theta(I)\gamma+\theta(D_\pg)\gamma = c(s-r)\gamma+\theta(D_\pg)\gamma,
$$
and therefore $\gamma(t)$ will be a solution as soon as $b'(t)=(s-r)cb(t)^\alpha$ and $b(t)s'(t)=b(t)^\alpha$, which hold for the functions $b(t)$ and $s(t)$ given in the proposition.  On the other hand, since $[Q(\gamma),D_\pg]=0$, we have that
$$
Q(\gamma(t))=b(t)^{\alpha-1}e^{s(t)D_\pg}Q(\gamma)e^{-s(t)D_\pg} = b(t)^{\alpha-1}Q(\gamma),
$$
and thus $h(t):=e^{-s(t)Q(\gamma)}$ satisfies $h'(t)=-h(t)Q(\gamma(t))$.  It now follows from Theorem \ref{BF-thm} that
$$
\mu(t)=\left[\begin{smallmatrix} I&0\\ 0& h(t) \end{smallmatrix}\right]\cdot\lb = \left[\begin{smallmatrix} I&0\\ 0& e^{-s(t)c}I \end{smallmatrix}\right]\cdot\left(e^{-s(t)D}\cdot\lb\right) = e^{cs(t)}\cdot\lb,
$$
which implies part (i) for $c(t)=e^{cs(t)}=\left(a_{r,s,\alpha}ct+1\right)^{a_{r,s,\alpha}}$.

It only remains to prove the last statement on solitons.  Since $D\in\Der(\ggo)$ we have that $e^{tD}\in\Aut(\ggo)$ and thus there exists $\tilde{\vp}_t\in\Aut(G)$ such that $d\tilde{\vp}_t|_e=e^{tD}$ for all $t\in\RR$.  By using that $K$ is connected and $D\kg=0$ we obtain that $\tilde{\vp}_t(K)=K$ for all $t$.  This implies that $\tilde{\vp}_t$ defines a diffeomorphism $\vp_t$ of $G/K$ by $\vp_t(uK)=\tilde{\vp}_t(u)K$ for any
$u\in G$, which therefore satisfies at the origin that $d\vp_t|_{o}=e^{tD_{\pg}}$. Let  $X_D$ denote the vector field on $G/K$ defined by the one-parameter subgroup $\{\vp_t\}\subset\Diff(G/K)$, that is, $X_D(p)=\ddt|_0\vp_t(p)$ for
any $p\in G/K$.  It follows that
\begin{equation}\label{Lder}
\lca_{X_D}\gamma(o) = \ddt\bigg|_0\vp^*_t\gamma(o) =\ddt\bigg|_0 e^{-tD_{\pg}}\cdot\gamma = -\theta(D_\pg)\gamma,
\end{equation}
but since $Q(\gamma)=cI+D_{\pg}$, we obtain that
$$
q(\gamma) = \theta(Q(\gamma))\gamma = c\theta(I)\gamma + \theta(D_\pg)\gamma = c(s-r)\gamma-\lca_{X_D}\gamma,
$$
by using that every tensor in the previous formula is $G$-invariant (recall that the flow of $X_D$ is given by automorphisms of $G$).  This concludes the proof of the theorem.
\end{proof}

The above theorem motivates the following definition.

\begin{definition}\label{as}
A homogeneous space $(G/K,\gamma)$ endowed with a $G$-invariant geometric structure $\gamma$ and a reductive decomposition $\ggo=\kg\oplus\pg$ is said to be an {\it algebraic
soliton} if there exist $c\in\RR$ and $D=\left[\begin{smallmatrix} 0&0\\ 0&D_\pg \end{smallmatrix}\right]\in\Der(\ggo)$ such that
$$
Q(\gamma)=cI+D_{\pg}.
$$
\end{definition}

\begin{remark}
We note that any simply connected algebraic soliton is a soliton geometric structure by Theorem \ref{rsequiv}.  The hypothesis of $G/K$ being simply connected is in general necessary; see \cite[Remark 4.12]{solvsolitons} for a counterexample in the Ricci flow case.
\end{remark}

\begin{remark}\label{Dkcero}
Nothing changes by allowing a derivation of the form $D=\left[\begin{smallmatrix} \ast&0\\ 0&D_\pg \end{smallmatrix}\right]\in\Der(\ggo)$ in Definition \ref{as} since $D\kg=0$ must necessarily holds.  Indeed, it follows from \eqref{adZQ} that
$$
\ad{DZ}|_\pg=[D|_\pg,\ad{Z}|_\pg]=[Q(\gamma),\ad{Z}|_\pg]=0, \qquad\forall Z\in\kg,
$$
and thus $D\kg=0$ by almost-effectiveness.
\end{remark}

\begin{remark}
There is a more general way to consider a soliton $(G/K,\gamma)$ `algebraic'; namely, when there exists a one-parameter family $\tilde{\vp}(t)\in\Aut(G)$ with $\tilde{\vp}(t)(K)=K$ such that $\gamma(t)=c(t)\vp(t)^*\gamma$ is the solution to the geometric flow $\dpar\gamma=q(\gamma)$ starting at $\gamma$ for some scaling function $c(t)>0$, where $\vp(t)\in\Diff(G/K)$ is the diffeomorphism determined by $\tilde{\vp}(t)$.  As in the Ricci flow case, such $(G/K,\gamma)$ may be called a {\it semi-algebraic soliton} (see \cite[Definition 1.4]{Jbl} and \cite[Section 3]{homRS}).  It follows from (\ref{Lder}) that if $(G/K,\gamma)$ is a semi-algebraic soliton with reductive decomposition $\ggo=\kg\oplus\pg$, then
\begin{equation}\label{alg2}
Q(\gamma)=cI+\proy_{\qg_\gamma}(D_{\pg}), \qquad\mbox{for some} \quad c\in\RR, \quad D=\left[\begin{smallmatrix} 0&\ast\\ 0&D_\pg \end{smallmatrix}\right]=\ddt|_0\tilde{\vp}(t)\in\Der(\ggo),
\end{equation}
where $\proy_{\qg_\gamma}:\glg(\pg)=\ggo_\gamma\oplus\qg_\gamma\longrightarrow\qg_\gamma$ is the usual linear projection.  Note that $\pg$ may not be $D$-invariant as for algebraic solitons.  Conversely, if condition (\ref{alg2}) holds for some reductive decomposition and $G/K$ is simply connected, then one can prove in much the same way as the last statement in Theorem \ref{rsequiv} that $\left(G/K,\gamma\right)$ is indeed a soliton with
$q(\gamma)=c(s-r)\gamma - \lca_{X_D}\gamma$.
\end{remark}

\subsection{Lie group case}\label{lgcase}
Our aim in this section is to go over again the case of left-invariant geometric structures on Lie groups, i.e.\ $\hca_{0,n}(\gamma)$, the one which has been mostly applied in the literature (cf.\ for example \cite{nilricciflow, homRF, SCF} and the references therein).  Recall from Example \ref{ex0-n} that $\hca_{0,n}$ is simply the variety $\lca_n$ of $n$-dimensional Lie algebras, and since (h4) does not either give any restriction here, we obtain that $\hca_{0,n}(\gamma)=\lca_n$ and we identify
$$
\mu\in\lca_n\longleftrightarrow (G_\mu,\gamma_\mu)=(G_\mu,\gamma),
$$
where $\gamma_\mu$ denotes the left-invariant metric on the simply connected Lie group $G_\mu$ determined by the fixed tensor $\gamma$ we have on the Lie algebra $(\ggo,\mu)$ of $G_\mu$.  Condition \eqref{adkh} also holds trivially here, so every $h\in\Gl_n(\ggo)$ defines a Lie group isomorphism which is a geometric  equivalence
$$
(G_{h.\mu},\gamma)\longrightarrow(G_\mu,h^*\gamma).
$$
The orbit $\Gl(\ggo)\cdot\mu\subset\lca_n$ therefore parameterizes the set of all left-invariant structures on $G_\mu$ and the orbit $G_\gamma\cdot\mu$ parameterizes the subset of those which are equivalent to $(G_\mu,\gamma)$ via an automorphism.

We note that $\ggo=\pg$ and $\mu=\mu_\pg$ in this case, thus $\widetilde{h}(t)=h(t)$ in Theorem \ref{BF-thm} and the formulas and notation in Proposition \ref{eqs} and Theorem \ref{rsequiv} simplify considerably.

The following lower bound for the Lie injectivity radius gives rise to special convergence features for Lie groups which are not valid for homogeneous spaces in general.  Recall that $\mu\in\lca_n$ is said to be {\it completely solvable} if all the eigenvalues of $\ad_\mu{X}$ are real for any $X\in\ggo$.  It is well known that the exponential map of any simply connected completely solvable Lie group is a diffeomorphism.

\begin{lemma}\label{lieinjLG}\cite[Lemma 6.19]{spacehm}
Let $r_\mu$ be the Lie injectivity radius of $\mu\in\lca_n=\hca_{0,n}$. Then,
\begin{itemize}
\item[(i)] $r_\mu\geq\tfrac{\pi}{|\mu|}$.

\item[(ii)] $r_\mu=\infty$ for any completely solvable $\mu$ (in particular, $G_\mu$ is diffeomorphic to $\RR^n$).
\end{itemize}
\end{lemma}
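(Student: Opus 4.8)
The plan is to work directly with the Lie exponential map $\exp_\mu:\ggo\to G_\mu$, since in the Lie group case ($q=0$, $K_\mu$ trivial) the quotient map $\pi_\mu$ is the identity and the Lie injectivity radius $r_\mu$ is just the largest $r>0$ for which $\exp_\mu$ restricts to a diffeomorphism of the metric ball $B(0,r)\subset\ggo$ onto its image. An injective local diffeomorphism out of an open ball is automatically an open embedding (a local diffeomorphism is an open map, so injectivity upgrades it to a diffeomorphism onto its open image), so it suffices to establish two things on $B(0,\pi/|\mu|)$: that $d\exp_\mu$ is everywhere nonsingular, and that $\exp_\mu$ is injective.

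For the first point I would use the classical formula $(d\exp_\mu)_X=(dL_{\exp_\mu X})_e\circ\tfrac{1-e^{-\ad_\mu X}}{\ad_\mu X}$, where $L_g$ is left translation; this is singular precisely when $\ad_\mu X$ has a (complex) eigenvalue in $2\pi\im\ZZ\setminus\{0\}$, the smallest such in modulus being $2\pi$. To convert this into a norm bound, note that Cauchy--Schwarz applied to \eqref{ipg} gives $|\mu(X,Y)|\le|\mu|\,|X|\,|Y|$, whence the operator norm satisfies $\|\ad_\mu X\|\le|\mu|\,|X|$, and therefore every eigenvalue $\lambda$ of $\ad_\mu X$ obeys $|\lambda|\le|\mu|\,|X|$. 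For $|X|<2\pi/|\mu|$ this is $<2\pi$, so $\exp_\mu$ is a local diffeomorphism on $B(0,2\pi/|\mu|)$, and in particular on the smaller ball $B(0,\pi/|\mu|)$. This step is routine.

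The injectivity is the main obstacle, and it is exactly where the factor $\pi$ (rather than $2\pi$) enters. Suppose $\exp_\mu X=\exp_\mu Y$ with $|X|,|Y|<\pi/|\mu|$, so that $\|\ad_\mu X\|,\|\ad_\mu Y\|<\pi$ and all eigenvalues of $\ad_\mu X$ and $\ad_\mu Y$ lie in the open strip $|\Im z|<\pi$. Applying $\Ad$ yields $e^{\ad_\mu X}=e^{\ad_\mu Y}$, and by the uniqueness of the matrix logarithm on that strip (if two operators with spectra in $\{|\Im z|<\pi\}$ have equal exponentials, they are equal) we get $\ad_\mu X=\ad_\mu Y$, i.e.\ $C:=X-Y\in\zg(\ggo)$. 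Since $C$ is central it commutes with $Y$, so $\exp_\mu X=\exp_\mu Y\,\exp_\mu C$, forcing $\exp_\mu C=e$. It remains to see $C=0$: a nonzero central $C$ with $\exp_\mu C=e$ would make $t\mapsto\exp_\mu(tC)$ periodic, producing a central circle $S^1\subset G_\mu$; but then the homotopy sequence of the fibration $S^1\to G_\mu\to G_\mu/S^1$ gives $0=\pi_2(G_\mu/S^1)\to\pi_1(S^1)=\ZZ\to\pi_1(G_\mu)=0$ (using that Lie groups have vanishing $\pi_2$ and that $G_\mu$ is simply connected), which is impossible. Hence $X=Y$, proving $r_\mu\ge\pi/|\mu|$ and establishing (i).

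Part (ii) is then immediate from the theorem recalled just above the statement: for completely solvable $\mu$ every $\ad_\mu X$ has only real eigenvalues, so $\exp_\mu$ of the simply connected group $G_\mu$ is a global diffeomorphism onto $G_\mu$ (which is therefore diffeomorphic to $\RR^n$). In particular $\exp_\mu$ restricts to a diffeomorphism on every ball $B(0,r)$, so $r_\mu=\infty$. I expect the only delicate ingredient in the whole argument to be the injectivity in (i): the strip-uniqueness of the logarithm controls $\ad_\mu$ but only up to the center, and the simple-connectivity/$\pi_2$ argument is precisely what removes the central ambiguity.
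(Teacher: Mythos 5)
Your proof is correct. Note that the paper itself gives no argument for this statement --- it is imported verbatim from \cite[Lemma 6.19]{spacehm} --- so there is no in-text proof to compare against; I can only assess your reconstruction, and it holds up. The reduction of everything to spectral conditions on $\ad_\mu X$ via the Cauchy--Schwarz estimate $\|\ad_\mu X\|\le|\mu|\,|X|$ coming from \eqref{ipg} is the right move; the non-singularity of $d\exp_\mu$ on $B(0,2\pi/|\mu|)$ is classical; and you have correctly identified that injectivity is the delicate point and that it splits into two genuinely distinct ingredients. The uniqueness of the logarithm with spectrum in the open strip $|\Im z|<\pi$ (the principal logarithm) does apply, since $e^{\ad_\mu X}$ is invertible with no spectrum on $(-\infty,0]$ when the spectrum of $\ad_\mu X$ lies in that strip, and it yields only $\ad_\mu X=\ad_\mu Y$; the residual central ambiguity is real and is removed exactly as you say, because a simply connected Lie group contains no central circle (your fibration/$\pi_2$ argument, using that $G_\mu/S^1$ is again a Lie group and hence has vanishing $\pi_2$, is valid). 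Part (ii) is indeed immediate from the fact, recalled in the paper in the sentence preceding the lemma, that $\exp_\mu$ is a global diffeomorphism for simply connected completely solvable groups.
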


We can therefore rephrase Corollary \ref{cor-conv} in the case of Lie groups in a stronger way as follows.

\begin{proposition}\label{convmu4}
Let $\mu_k$ be a sequence in $\lca_n=\hca_{0,n}$ such that $\mu_k\to\lambda$.
\begin{itemize}
\item[(i)] $\lambda\in\lca_n$.

\item[(ii)] $(G_{\mu_k},\gamma)$ converges in the local sense to $(G_{\lambda},\gamma)$.

\item[(iii)] If either $G_\lambda$ is compact or all $\mu_k$ are completely solvable, then $(G_{\mu_k},\gamma)$ smoothly converges to $(G_{\lambda},\gamma)$ up to pull-back by diffeomorphisms.

\item[(iii)] $\gamma_{\mu_k}\to \gamma_\lambda$ smoothly on $\RR^n\equiv G_{\mu_k}$, provided all $\mu_k$ are completely solvable.
\end{itemize}
\end{proposition}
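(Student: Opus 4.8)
The plan is to derive all four statements from the convergence machinery of Section~\ref{converg} by using the lower bound for the Lie injectivity radius in Lemma~\ref{lieinjLG} to dispose of the non-collapsing hypothesis. For part (i), I would recall from Example~\ref{ex0-n} that $\hca_{0,n}(\gamma)=\lca_n$ is cut out of $\Lambda^2\ggo^*\otimes\ggo$ by the Jacobi condition together with the trivially satisfied (h2)--(h4); being the common zero set of polynomials, $\lca_n$ is closed, so $\mu_k\to\lambda$ with $\mu_k\in\lca_n$ forces $\lambda\in\lca_n$. For part (ii), the point is that a convergent sequence is bounded, say $|\mu_k|\leq C$ for all $k$, whence Lemma~\ref{lieinjLG}(i) gives $r_{\mu_k}\geq\pi/|\mu_k|\geq\pi/C$ and thus $\inf_k r_{\mu_k}>0$. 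Local convergence $(G_{\mu_k},\gamma)\to(G_\lambda,\gamma)$ then follows at once from Theorem~\ref{convmu2}.

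For the first part (iii), I would split into the two cases. When $G_\lambda$ is compact, local convergence from (ii) together with bounded geometry and non-collapsing (automatic by homogeneity) feeds the Compactness Theorem to produce a pointed-convergent subsequence; since the underlying manifold of the limit is compact, the last bullet of Section~\ref{converg} upgrades pointed convergence to smooth convergence up to pull-back by diffeomorphisms, and as the limit is pinned down by (ii) a routine subsequence argument promotes this to the full sequence. When instead all $\mu_k$ are completely solvable, I would first observe that complete solvability passes to the limit: for each $X\in\ggo$ the condition that $\ad_\mu X$ have only real spectrum is closed (a non-real eigenvalue of $\ad_\lambda X$ would, by continuity of the roots of the characteristic polynomial, force non-real eigenvalues of $\ad_{\mu_k} X$ for large $k$), so $\lambda$ is completely solvable as well. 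By Lemma~\ref{lieinjLG}(ii) the exponential maps $\exp_{\mu_k}$ and $\exp_\lambda$ are global diffeomorphisms onto $G_{\mu_k}\cong\RR^n$ and $G_\lambda\cong\RR^n$; composing them gives global diffeomorphisms $\phi_k:=\exp_{\mu_k}\circ\exp_\lambda^{-1}\colon G_\lambda\to G_{\mu_k}$, so the claim reduces to the smooth convergence asserted in the second part (iii).

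For that second part (iii), I would identify each $G_{\mu_k}$ with $\RR^n=\ggo$ through $\exp_{\mu_k}$ and pull the left-invariant structure $\gamma_{\mu_k}$ back to a tensor field on $\RR^n$; the task is to show these converge in $C^\infty$ on compact sets to the tensor field coming from $\lambda$. The coordinate expression of a left-invariant tensor involves only the fixed value $\gamma$ at the origin and the differentials of left translations, and these are assembled from $\exp_\mu$ and the group multiplication, all of which solve ODEs (equivalently are given by the Baker--Campbell--Hausdorff series) whose coefficients depend polynomially on $\mu$. Standard smooth dependence on parameters then yields $C^\infty_{\mathrm{loc}}$ convergence of every ingredient as $\mu_k\to\lambda$, hence of the pulled-back structures.

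The genuinely technical point, and the one I expect to be the main obstacle, is precisely this last step: verifying that the coordinate representation of a left-invariant tensor depends $C^\infty_{\mathrm{loc}}$-continuously on the bracket $\mu$. This is exactly the mechanism underlying the convergence results of \cite{spacehm} on which Theorems~\ref{convmu} and~\ref{convmu2} already rest, so in a write-up I would cite that source for the analytic estimates rather than reproduce them, and concentrate the new work on the reductions carried out above.
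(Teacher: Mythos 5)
Your proposal is correct and follows essentially the same route as the paper, which states this proposition as a direct rephrasing of Corollary \ref{cor-conv} (via Theorem \ref{convmu2}) using the Lie injectivity radius bounds of Lemma \ref{lieinjLG} to supply the non-collapsing hypothesis, deferring the analytic details to \cite{spacehm}. Your added observations --- closedness of $\lca_n$, boundedness of a convergent sequence giving $\inf_k r_{\mu_k}>0$, and closedness of complete solvability under limits --- are exactly the routine reductions the paper leaves implicit.
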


\section{Overview of applications in the literature}\label{exa-sec}

The approach that proposes to vary Lie brackets rather than metrics or geometric structures has been used for decades in homogeneous geometry.  In what follows, we review some selected examples and applications in the literature, in a chronological way on each topic.  For a more complete study, we refer the reader to the references in the cited articles.  In most of these applications, geometric invariant theory of the variety of Lie algebras, including moment maps, closed orbits, stability, categorical quotients, Kirwan stratification, etc., has been exploited in one way or another.

\subsection{Pinching curvature conditions}

\begin{itemize}
\item \cite{Hnt} Classification of Lie groups admitting a metric of negative sectional curvature.

\item \cite{Mln} Scalar, Ricci and sectional curvature properties of Lie groups.

\item \cite{inter} Degenerations of $3$-dimensional real Lie algebras.

\item \cite{Ebr} Ricci curvature of $2$-step nilpotent Lie groups.

\item \cite{NklNkn, Nkl2} Existence of Ricci negative metrics on solvable Lie groups.

\item \cite{Wll4} Existence of Ricci negative metrics on some Lie groups with a compact Levi factor.  
\end{itemize}

\subsection{Einstein solvmanifolds and nilsolitons}

\begin{itemize}
\item \cite{Hbr} Foundational structure and uniqueness results on {\it Einstein solvmanifolds} (i.e.\ Einstein left-invariant metrics on solvable Lie groups).
\item \cite{soliton} Introduction of {\it nilsolitons} (i.e.\ algebraic Ricci solitons on nilpotent Lie groups), uniqueness, variational characterization and relationship with Einstein solvmanifolds.

\item \cite{Wll1,Frn1,Frn3} Classification of Einstein solvmanifolds and nilsolitons in low dimensions.

\item \cite{GrdKrr,einsteinsolv,Nkl,Wll2,Wll3,Nkl3,Jbl0,Pyn2} Structure and classification of Einstein solvmanifolds and nilsolitons.

\item \cite{standard} Proof of the standard property for Einstein solvmanifolds.

\item \cite{cruzchica} Survey on Einstein solvmanifolds and nilsolitons up to April 2008.

\item \cite{nonsingular} Nonsingular $2$-step nilpotent Lie algebras: Pfaffian forms, classification and nilsolitons.
\end{itemize}

\subsection{Ricci flow}

\begin{itemize}
\item \cite{Gzh,Pyn,nilricciflow} Ricci flow for {\it nilmanifolds} (i.e.\ left-invariant metrics on nilpotent Lie groups).

\item \cite{GlcPyn} Ricci flow evolution of $3$-dimensional homogeneous geometries.

\item \cite{homRF} Ricci flow on homogeneous spaces, after a study of different kinds of convergence of homogeneous Riemannian manifolds in \cite{spacehm} introducing the space $\hca_{q,n}$ of homogeneous spaces.

\item \cite{Arr} Ricci flow of {\it almost-abelian} solvmanifolds (i.e.\ solvable Lie groups with a codimension one abelian normal subgroup).

\item \cite{Lfn} Scalar curvature controls the formation of singularities of homogeneous Ricci flows.
\end{itemize}

\subsection{Homogeneous Ricci solitons}

\begin{itemize}
\item \cite{solvsolitons} Structure, uniqueness and classification of {\it solvsolitons} (i.e.\ algebraic Ricci solitons on solvable Lie groups).

\item \cite{Jbl} Any homogeneous Ricci soliton is isometric to a semi-algebraic soliton.  Ricci solitons under transitive semisimple and solvable Lie groups.

\item \cite{homRS} Bracket flow evolution of invariant Ricci solitons on homogeneous spaces, including a geometrical characterization of algebraic solitons as those for which the Ricci flow solution is simultaneously diagonalizable.

\item \cite{alek} Structural results on homogeneous Ricci solitons, providing new insights into the longstanding {\it Alekseevskii conjecture}: any connected homogeneous Einstein manifold of negative scalar curvature is diffeomorphic to a Euclidean space.

\item \cite{Jbl2} Any homogeneous Ricci soliton is isometric to an algebraic soliton.

\item \cite{ArrLfn1,ArrLfn2} Classification of homogeneous Ricci solitons and the Alekseevskii conjecture in low dimensions.

\item \cite{JblPtrWll} Linear and dynamically stability of algebraic Ricci solitons on many classes of Lie groups, including an open set of two-step solvsolitons, all two-step nilsolitons and all nilsolitons of dimensions six or less.

\item \cite{JblPtr} A refinement of the structure result in \cite{alek}.
\end{itemize}

\subsection{Curvature flows on almost-hermitian Lie groups}

\begin{itemize}
\item \cite{EnrFnVzz} Long-time existence for any pluriclosed flow solution on a nilpotent Lie group, as an application of the bracket flow approach.

\item \cite{SCF} General curvature flows on almost-hermitian Lie groups and their algebraic solitons.

\item \cite{Frn2} Existence of solitons for the symplectic curvature flow on any $2$- and $3$-step nilpotent Lie group of dimension $6$.

\item \cite{SCFmuA} Symplectic curvature flow and algebraic solitons on three large classes of almost-K\"ahler Lie groups: almost-abelian solvmanifolds, a construction attached to each left-symmetric algebra (which provides intriguing examples of shrinking solitons) and $4$-dimensional solvable Lie groups.

\item \cite{CRF} Chern-Ricci flow and algebraic solitons on hermitian Lie groups, including a complete picture in the almost-abelian case and a Chern-Ricci soliton hermitian metric on most of the complex surfaces which are solvmanifolds, where an unexpected shrinking soliton appeared. 
    
\item \cite{LF} Laplacian flow of closed $G_2$-structures and its algebraic solitons on homogeneous spaces.  
\end{itemize}

\section{Chern-Ricci flow}\label{CRF-sec}

The {\it Chern-Ricci flow} (CRF for short) is the evolution equation for a one-parameter family $g(t)$ of hermitian metrics on a fixed complex manifold $(M,J)$ defined by
\begin{equation}\label{CRF-intro}
\dpar g(t)=-2p(t)(\cdot,J\cdot), \qquad\mbox{or equivalently}, \qquad \dpar \omega(t)=-2p(t),
\end{equation}
where $\omega=g(J\cdot,\cdot)$ and $p=p(J,g)$ is the Chern-Ricci form (see \cite{TstWnk} for further information on this flow).  The $2$-form $p$ is closed, of type $(1,1)$, locally exact and in the K\"ahler case coincides with the Ricci form $\ricci(J\cdot,\cdot)$, so CRF becomes K\"ahler-Ricci flow as soon the starting metric is K\"ahler.  The CRF on Lie groups has already been studied in \cite{CRF} using the bracket flow approach.  Our aim in this section is to show that most of the results obtained in \cite{CRF} for Lie groups are still valid on homogeneous spaces.

Let $(G/K,J)$ be a homogeneous space endowed with a $G$-invariant complex structure $J$, which will be fixed from now on.  Given a reductive decomposition $\ggo=\kg\oplus\pg$ for $G/K$, it can be proved in much the same way as in \cite[Proposition 4.1]{Vzz2} (see also \cite{Pk}) that the Chern-Ricci form of any $G$-invariant hermitian metric $g$ on the homogeneous complex manifold $(G/K,J)$ is given by
\begin{equation}\label{CRform}
p(X,Y)=-\unm\tr{J\ad_\pg{[X,Y]_\pg}} + \unm\tr{\ad_\pg{J[X,Y]_\pg}}, \qquad\forall X,Y\in\pg,
\end{equation}
where $\ad_\pg{X}(Y):=[X,Y]_\pg$ for all $X,Y\in\pg$.  Remarkably, $p$ only depends on $J$, it is independent from the metric $g$.  This implies that along the CRF-solution $g(t)$ starting at a $G$-invariant hermitian metric $g_0$ on $(G/K,J)$, the Chern-Ricci form $p(t)\equiv p_0:=p(J,g_0)$, and so $g(t)$ is simply given by
\begin{equation}\label{CRF-sol}
g(t)=g_0-2tp_0(\cdot,J\cdot), \qquad\mbox{or equivalently}, \qquad \omega(t)=\omega_0-2tp_0.
\end{equation}
The {\it Chern-Ricci operator} $P\in\End(\pg)$ of the hermitian manifold $(G/K,J,g)$ is the hermitian map defined by
$p=\omega(P\cdot,\cdot)=g(JP\cdot,\cdot)$.  We note that by \eqref{CRF-sol}, the solution exists as long as the hermitian map $I-2tP_0$ is positive, where $P_0$ denotes the Chern-Ricci operator of $g_0$, so the maximal interval of time existence $(T_-,T_+)$ of $g(t)$ can be easily computed in terms of the extremal eigenvalues of the symmetric operator $P_0$ as follows:
\begin{equation}\label{CRFint}
T_+=\left\{\begin{array}{lcl} \infty, & \quad\mbox{if}\;P_0\leq 0, \\ \\ 1/(2p_+), & \quad\mbox{otherwise,}\end{array}\right. \qquad
T_-=\left\{\begin{array}{lcl} -\infty, & \quad\mbox{if}\;P_0\geq 0, \\ \\ 1/(2p_-), & \quad\mbox{otherwise,}\end{array}\right.
\end{equation}
where $p_+$ is the maximum positive eigenvalue of the Chern-Ricci operator $P_0$ of $g_0$ and $p_-$ is the minimum negative eigenvalue.

Since the velocity $q(g)$ of the CRF equals $-2p(\cdot,J\cdot)$  (see \eqref{CRF-intro}), we obtain from Example \ref{exa}, (vi), that
$$
Q(g)=P\in\qg_g=\herm(\pg,J,g),
$$
and thus the bracket flow is given by
\begin{equation}\label{BF-CRF}
\ddt\mu(t)=\delta_{\mu(t)}\left(\left[\begin{smallmatrix} 0&0\\ 0&P_{\mu(t)} \end{smallmatrix}\right]\right), \qquad\mu(0)=\lb,
\end{equation}
where $P_\mu\in\herm(\pg,J,g_0)$ is the Chern-Ricci operator of the hermitian homogeneous space $(G_\mu/K_\mu,J,g_0)$.  It follows from \eqref{CRF-sol} that the Chern-Ricci operator of a CRF-solution $g(t)$ equals $P(t)=(I-2tP_0)^{-1}P_0$.

The solution $h(t)\in\Gl(\pg,J)$ to the ODE in Theorem \ref{BF-thm}, (i) is therefore given by $h(t)=(I-2tP_0)^{1/2}$, from which follows that the bracket flow solution is
$$
\mu_\pg(t)=(I-2tP_0)^{1/2}\cdot\lb_\pg, \qquad \mu_\kg(t)=\left[(I-2tP_0)^{-1/2}\cdot,(I-2tP_0)^{-1/2}\cdot\right]_\kg.
$$
Thus with respect to any orthonormal basis $\{ X_1,\dots,X_{2n}\}$ of $(\pg,g_0)$ of eigenvectors of $P_0$, say with eigenvalues $\{ p_1,\dots,p_{2n}\}$, the structure coefficients of $\mu(t)|_{\pg\times\pg}$ are
\begin{equation}\label{muijk}
\begin{array}{c}
(\mu_\pg)_{ij}^k(t)=\left(\frac{1-2tp_k}{(1-2tp_i)(1-2tp_j)}\right)^{1/2} c_{ij}^k, \qquad
(\mu_\kg)_{ij}^l(t)=\left(\frac{1}{(1-2tp_i)(1-2tp_j)}\right)^{1/2} c_{ij}^l,
\end{array}
\end{equation}
where $c_{ij}^k$ are the structure coefficients of the Lie bracket $\lb$ of $\ggo$: $[X_i,X_j]_\pg=\sum c_{ij}^kX_k$; $[X_i,X_j]_\kg=\sum c_{ij}^lZ_l$, $\{ Z_l\}$ any basis of $\kg$.

A straightforward analysis using \eqref{muijk} gives that $\mu(t)$ converges as $t\to T_+$ if and only if $T_+=\infty$ (i.e. $P_0\leq 0$) and $\pg_0:=\Ker{P_0}$ satisfies $[\pg_0,\pg_0]_\pg\subset\pg_0$.  Moreover, the following conditions are equivalent in the case when $T_+=\infty$:

\begin{itemize}
\item[(i)] $\mu(t)\to 0$, as $t\to\infty$.
\item[(ii)] $[\pg,\pg_0]_\pg=0$.
\item[(iii)] $(2t+1)^{1/2}\mu(t)$ converges as $t\to\infty$.
\end{itemize}

Following the lines of \cite[Section 5]{CRF}, one can obtain many results on convergence from Corollary \ref{conv} and Section \ref{converg}, including some information on to what extent the Chern-Ricci form and the homogeneous space structure of the pointed limit are determined by the starting hermitian manifold $(G/K,J,g_0)$.  For instance, the following can be proved:

\begin{itemize}
\item If $P_0\leq 0$ (i.e. $T_\infty=\infty$) and $[\pg,\pg_0]_\pg=0$, then $g(t)/t$ converges in the pointed sense, as $t\to\infty$, to a Chern-Ricci soliton
    $(G_\infty/K_\infty,J,g_0)$ with reductive decomposition $\ggo=\kg\oplus(\pg_0^\perp\oplus\pg_0)$ and Lie bracket $\lb_\infty$ such that $\lb_\infty|_{\kg\times\pg}=\lb$ and
    $$
    [\pg_0^\perp,\pg_0^\perp]_\infty\subset\pg_0^\perp, \quad [\pg_0^\perp,\pg_0]_\infty\subset\pg_0, \quad [\pg_0,\pg_0]_\infty=0.
    $$
    The Chern-Ricci operator $P_\infty$ of the soliton satisfies $P_\infty|_{\pg_0^\perp}=-I$, $P_\infty|_{\pg_0}=0$.  In particular, if the starting hermitian metric $g_0$ has negative Chern-Ricci tensor $p_0(\cdot,J\cdot)$, then $g(t)/t$ flows to a homogeneous hermitian manifold with $p=-\omega$.

\item If the eigenspace $\pg_m$ of the maximum positive eigenvalue of $P_0$ satisfies $0\ne [\pg_m,\pg_m]_\pg\subset\pg_m$, then $T_+<\infty$ and $g(t)/(T_+-t)$ converges in the pointed sense, as $t\to T_+$, to a Chern-Ricci soliton $(G_+/K_+,J,g_0)$ with reductive decomposition $\ggo=\kg\oplus(\pg_m\oplus\pg_m^\perp)$ and Lie bracket $\lb_+$ such that $\lb_+|_{\kg\times\pg}=\lb$ and
    $$
    [\pg_m,\pg_m]_+\subset\pg_m, \quad [\pg_m,\pg_m^\perp]_+\subset\pg_m^\perp, \quad [\pg_m^\perp,\pg_m^\perp]_+=0.
    $$
    The Chern-Ricci operator of $(G_+/K_+,J,g_0)$ is given by $P_+|_{\pg_m}=\unm I$, $P_+|_{\pg_m^\perp}=0$.
\end{itemize}

\begin{remark}
The Chern scalar curvature
$$
\tr{P(t)}=\sum_{i=1}^{2n} \frac{p_i}{1-2tp_i},
$$
is strictly increasing unless $P(t)\equiv 0$ (i.e. $g(t)\equiv g_0$) and the integral of $\tr{P(t)}$ must blow up at a finite-time singularity $T_+<\infty$.  However, $\tr{P(t)}\leq \frac{C}{T_+-t}$ for some constant $C>0$, which is the claim of a well-known general conjecture for the K\"ahler-Ricci flow (see e.g. \cite[Conjecture 7.7]{SngWnk}).
\end{remark}

\section{Laplacian flow for $G_2$-structures}\label{G2sol}

The following natural geometric flow for $G_2$-structures on a $7$-dimensional manifold $M$ (see Example \ref{exa}, (v)), called the {\it Laplacian flow}, was introduced by R. Bryant in \cite{Bry}:
$$
\dpar\vp(t) = \Delta_t\vp(t),
$$
where $\Delta_t:=\Delta_{g_t}$ is the Hodge Laplacian operator of the Riemannian metric $g_t:=g_{\vp(t)}$ determined by $\vp(t)$ (i.e.\ $\Delta_t=-d\ast_t d\ast_t +\ast_td\ast_td$, where $\ast_t$ is the Hodge star operator defined by the metric $g_t$ and orientation).  We refer the reader to \cite{LtyWei} and the references therein for further information on this flow.

For each $x,y\in\RR$, consider the $7$-dimensional nilpotent Lie algebra  $\ngo=\ngo(x,y)$ with basis $\{ e_1,\dots,e_7\}$ and Lie bracket $\mu=\mu_{x,y}$ defined by
$$
\mu(e_1,e_2)=-xe_5, \quad \mu(e_1,e_3)=-ye_6; \quad\mbox{or equivalently}, \quad d_\mu e^5=xe^{12}, \quad d_\mu e^6=ye^{13}.
$$
The $3$-form
$$
\vp=e^{147}+e^{267}+e^{357}+e^{123}+e^{156}+e^{245}-e^{346},
$$
is positive and so it determines a left-invariant $G_2$-structure $\vp$ on the simply connected Lie group $N$ with Lie algebra $\ngo$.  It is easy to check that $d_\mu\vp=(y-x)e^{1237}$, which implies that $\vp$ is closed (or calibrated) if and only if $x=y$.

We ask ourselves whether $(N,\vp)$ is a {\it Laplacian soliton}, i.e.\ a soliton $G_2$-structure for the Laplacian flow. In the light of Theorem \ref{rsequiv}, it would be enough to find a derivation $D\in\Der(\ngo)$ such that
\begin{equation}\label{solg2}
\Delta_\vp\vp=k\vp+\lca_{X_D}\vp, \qquad\mbox{for some}\quad k\in\RR,
\end{equation}
where $X_D$ is the vector field on $N$ defined by the one-parameter subgroup of automorphisms $F_t$ with derivative $e^{tD}\in\Aut(\ngo)$ for all $t$.  Note that our fixed basis $\{ e_i\}$ is conveniently orthonormal (and oriented) with respect to the metric $g_\vp$, so $\Delta_\vp=\Delta_\mu=-d_\mu\ast d_\mu\ast+\ast d_\mu\ast d_\mu$.

We propose, with a certain amount of optimism, a diagonal
$$
D:=\Diag(a,b,c,d,a+b,a+c,e)\in\Der(\ngo),
$$
in terms of the basis $\{ e_i\}$.  By a straightforward computation we obtain that
$$
\Delta_\vp\vp=(x+y)e^{123} + y(x-y)e^{267} + x(x-y)e^{357},
$$
and, on the other hand by \eqref{Lder} that
\begin{align*}
\lca_{X_D}\vp =& \vp(D\cdot,\cdot,\cdot) +\vp(\cdot,D\cdot,\cdot) +\vp(\cdot,\cdot,D\cdot) \\
=& (a+d+e)e^{147} +(a+b+c+e)e^{267} +(a+b+c+e)e^{357} +(a+b+c)e^{123} \\
&+(3a+b+c)e^{156} +(a+2b+d)e^{245} +(a+2c+d)e^{346}.
\end{align*}

It follows that \eqref{solg2} can hold only if $y(x-y)=x(x-y)$, that is, $x=y$.  In fact, if we set $x=y=1$, it can be easily checked that the derivation $D=-\Diag(1,1,1,2,2,2,2)$ solves the soliton equation \eqref{solg2} with $k=5$ and provides us with an expanding Laplacian soliton $(N,\vp)$ which is closed.  Note that $\vp$ is far from being an eigenvector of $\Delta_\vp$.

The following remarks are in order.

\begin{itemize}
\item The Lie group $N$ is diffeomorphic to $\RR^7$ and it admits a cocompact discrete subgroup $\Gamma$.  However, the corresponding closed $G_2$-structure on the compact manifold $M=N/\Gamma$ is not necessarily a Laplacian soliton since the vector field $X_D$ does not descend to $M$.

\item The Laplacian flow solution $\vp(t)$ on $M=N/\Gamma$ starting at $\vp$ remains locally equivalent to $\vp$, is immortal and has apparently no chances to converge in any reasonable sense.  However, the norm of the intrinsic torsion $T$ of $\vp(t)$ converges to zero, as $t\to\infty$.

\item On the other hand, it follows from Theorem \ref{rsequiv} and Proposition \ref{convmu4} that the solution on the cover $N$ smoothly converges to the flat $(\RR^7,\vp)$ up to pull-back by time-dependent diffeomorphisms, as $t\to\infty$, uniformly on compact sets of $N=\RR^7$.  Laplacian flow evolution of $G_2$-structures on homogeneous spaces and their solitons is the subject of the forthcoming paper \cite{LF}.

\item $X_D$ is not the gradient field of any real smooth function on $N$, so $\vp$ is not a gradient soliton.

\item The metric $g_\vp$ is a Ricci soliton.  This was proved in \cite{FrnFinMnr}, where existence of closed $G_2$-structures on nilpotent Lie groups inducing Ricci solitons is studied, as well as the Laplacian flow evolution of such structures.  The solution $\vp(t)$ was explicitly given in
    \cite[Theorem 4.2]{FrnFinMnr}, and previously in \cite[Section 6.2.1, Example 2]{Bry}, though the fact that $\vp(t)$ is a self-similar solution was not mentioned in these papers.

\item Existence and uniqueness of closed Laplacian solitons on some nilpotent Lie groups admitting a closed $G_2$-structure is studied in \cite{Ncl}.
\end{itemize}

\end{document}